\newtheorem{theo}{Theorem}[section]
\newtheorem{lemma}[theo]{Lemma}
\newtheorem{prop}[theo]{Proposition}
\newtheorem{conjec}[theo]{Conjecture}
\theoremstyle{definition}
\newtheorem{defi}[theo]{Definition}
\newcommand{\Z}{\mathbb{Z}}
\newcommand{\B}{\mathcal{B}}
\newcommand{\co}{\mathcal{O}}
\theoremstyle{remark}
\begin{document}

\title{Structure Trees, Networks and Almost Invariant Sets }
\author{M.J. Dunwoody  }

\subjclass[2010]{20F65 ( 20E08)}
\keywords{Structure trees, tree decompositions, group splittings}

\begin{abstract}
A self-contained account of the theory of structure trees for edge cuts in networks is given.    Applications include a generalisation of the 
Max-Flow Min-Cut Theorem to infinite networks and
a short proof of a conjecture of Kropholler.   This gives a relative version of Stallings' Theorem
on the structure of groups with more than one end.   A generalisation of the Almost Stability Theorem is also obtained, which provides  information
about the structure of the Sageev cubing.
\end{abstract}
\maketitle

\newcommand{\F}{\mathcal{F}}

\newcommand{\N}{\mathbb{N}}
\newcommand{\Q}{\mathbb{Q}}
\newcommand{\cO}{\mathcal{O}}

\newcommand{\D}{\mathcal{D}}
\newcommand{\C}{\mathcal{C}}
\newcommand{\ce}{\mathcal{E}}
\newcommand{\cR}{\mathcal{R}}

\newcommand{\cb}{\mathcal{B}}

\newcommand{\bG}{\bar G}
\newcommand{\bH}{\bar H}
\renewcommand{\d}{{\rm d}}

\newcounter{fig}
\setcounter{fig}{0}

\section{Introduction}
Let $X$ be a connected graph.     A subset $A$ of the vertex set $VX$ is defined to be a {\it cut} if 
$\delta A $ is finite.    Here $\delta A$ is the set of edges with one vertex in $A$ and one vertex in $A^* = VX - A$.
A ray $R$ in $X$ is an infinite sequence $x_1, x_2, \dots $ of distinct vertices such that $x_i, x_{i+1}$ are adjacent for every $i$.
If $A$ is an edge cut, and $R$ is a ray, then there exists an integer $N$ such that for $n > N$ either $x_n \in A$ or $x_n \in A^*$.
We say that $A$ separates rays $R = (x_n), R' = (x_n')$ if for $n$ large enough either $x_n \in A, x_n' \in A^*$ or $x_n \in A^*, x_n' \in A$.
We define $R\sim R'$ if they are not separated by any edge cut.     It is easy to show that $\sim $ is an equivalence relation on the set $\Phi X$ of rays in $X$.
The set $\Omega X = \Phi X/ \sim $ is the set of {edge} ends of $X$.   An edge cut $A$ separates ends $\omega , \omega '$
if it separates rays representing $\omega , \omega '$.     A cut $A$ separates an end $\omega $ and a vertex $v  \in VX$ if for any ray representing
$\omega $,   $R$ is eventually in $A$ and $v \in A^*$ or vice versa.

The number $e(G)$ of a finitely generated group is the number of ends of a Cayley graph of $X$ with respect to a finite generating set $S$.
It turns out that $e(G)$ does not depend on which generating set $S$ is chosen, and that it is always one of $0, 1, 2$ or the cardinal number $c$.
If a finitely generated group $G$ has more than one end, then there is a cut $A \subset G$ (the vertex set of any Cayley graph), which separates 
two rays.     Thus both $A$ and $A^*$ are infinite.      The fact that $\delta A$ is finite is equivalent to the fact that the symmetric difference
$A + As$ is finite for each  $s \in S$, and it is not hard to see that this is equivalent to requiring that $A + Ag$ is finite for every $g \in G$.
A set $A$ with these properties is called a {\it proper almost invariant set}.   Thus a  subset $A$ of $G$ is said to be  {\it almost invariant} if the symmetric difference $A + Ag$ is finite for every
$g \in G$.   In addition $A$ is said to be {\it proper} if both $A$ and $A^* = G -A$ are infinite.   Clearly the  finitely generated   group $G$ has  more than one end
if and only if it has a proper almost invariant subset.    This provides a way of extending our definition to arbitrary groups.
We say that a group $G$ has more than one end it it has a proper almost invariant set.

\begin{theo}\label{SC} 
A group $G$ contains a proper almost invariant subset (i.e. it has more than one end) if and only if it has a non-trivial action
on a tree with finite edge stabilizers.
\end{theo}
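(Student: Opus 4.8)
The plan is to prove the two implications separately. The forward implication (tree action $\Rightarrow$ proper almost invariant set) is a direct construction, while the converse is the substantial half and is where the structure tree machinery does its work.

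For the forward direction, suppose $G$ acts non-trivially on a tree $T$ with finite edge stabilizers; after passing to the minimal $G$-invariant subtree I may assume every edge is essential. Fix a base vertex $v_0$ and an edge $e$. Deleting the interior of $e$ splits $VT$ into two half-trees, with vertex sets $B$ and $B^* = VT - B$. I set $A = \{g \in G : gv_0 \in B\}$ and claim this is the required almost invariant subset of $G$. For almost invariance, note that $g$ lies in the symmetric difference $A + Ag_1$ precisely when exactly one of $gv_0$ and $gg_1^{-1}v_0$ lies in $B$; translating by $g^{-1}$, this happens exactly when the edge $g^{-1}e$ lies on the finite geodesic from $v_0$ to $g_1^{-1}v_0$. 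For each of the finitely many edges $f$ on that geodesic the set of $g$ with $g^{-1}e = f$ is either empty or a coset of the (finite) stabilizer $\Fix(e)$, so $A + Ag_1$ is finite. Properness follows from non-triviality: on the minimal subtree a suitable edge $e$ separates the orbit $Gv_0$ into two infinite pieces (exhibited, for instance, by the translates of $v_0$ along the axis of a hyperbolic element crossing $e$), so both $A$ and $A^*$ are infinite.

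For the converse, let $A$ be a proper almost invariant subset. My strategy is to manufacture from $A$ a $G$-invariant, complementation-closed family of cuts that is \emph{nested}, and then feed this family into the structure tree construction. I would begin with the orbit family $\{gA, gA^* : g \in G\}$ and record the crucial finiteness coming from almost invariance: only finitely many translates $gA$ \emph{cross} $A$ (where ``crosses'' means all four sets $gA \cap A$, $gA \cap A^*$, $gA^* \cap A$, $gA^* \cap A^*$ are infinite). I expect the heart of the proof, and its main obstacle, to be the passage from this crossing-controlled orbit to a genuinely nested $G$-invariant set of cuts; this is the combinatorial core that replaces Stallings' cut-and-paste, and is exactly where the structure tree theory earns its keep. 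Once a nested family is in hand, the structure tree theorem produces a $G$-tree $T$ whose (unoriented) edges are the pairs $\{B, B^*\}$ in the family, with incidence given by the betweenness/ordering of the cuts.

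Finally I would check the two properties of the resulting action. For finite edge stabilizers, the stabilizer of an edge $\{B, B^*\}$ preserves the finite coboundary $\delta B$ and hence the finite set of its endpoints; since $G$ acts freely on the vertices of its Cayley graph, this stabilizer embeds in the symmetric group on that finite set and is therefore finite (the general, not-necessarily-finitely-generated, case being handled through the network formulation). Non-triviality is immediate: because $A$ and $A^*$ are both infinite, no single vertex of $T$ can collect all the cuts coherently, so $G$ fixes no vertex and the action is non-trivial. This establishes the equivalence.
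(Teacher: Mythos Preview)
Your outline is essentially the paper's own route. The forward direction is the same half-tree construction the paper carries out in the Introduction (its $G[e,\iota e]$ is your $A$), with the same finiteness count along the geodesic $[v_0,g_1^{-1}v_0]$. For the converse you correctly isolate the real content: producing a $G$-invariant nested family of cuts and invoking the structure-tree theorem; this is exactly what the paper does via the canonical system $\ce_n$ of Theorem~\ref{cE}, and your edge-stabilizer argument (stabilizer permutes the finite set $\delta B$) is verbatim the paper's.

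Two places deserve tightening. First, your properness argument appeals to a hyperbolic element, but a non-trivial action with finite edge stabilizers need not have one: if every element is elliptic then $G$ fixes an end, the vertex stabilizers along the ray to that end equal the (finite) edge stabilizers, and $G$ is a strictly ascending union of finite groups. In that case the half-tree set $A$ is \emph{never} proper for any edge (one side of every edge meets the orbit $Gv_0$ in a finite set), so the locally finite case needs a separate, direct construction of a proper almost invariant set; the paper's Introduction is equally brief here, simply asserting properness, so this is a gap you share with the exposition rather than a divergence from it. Second, your non-triviality argument for the structure tree is vaguer than the paper's: rather than saying ``no single vertex can collect all the cuts coherently'', the paper uses vertex transitivity of the Cayley graph to move $\delta D$ entirely inside $D$ (and inside $D^*$), producing an explicit $x\in G$ with $xD\subsetneq D$ or $D\subsetneq xD$, hence a hyperbolic element of the structure tree. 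That concrete step is worth including.
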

This result was proved by Stallings \cite {[St]} for finitely generated groups and was generalized to all groups by Dicks and Dunwoody \cite {[DD]}.
The action of a group $G$ on a tree is {\it trivial} if there is a vertex that is fixed by all of $G$.  Every group has a trivial action
on a tree.

Let $T$ be a tree with directed edge set $ET$.      If $e$ is a directed edge, then let $\bar e$ denote $e$ with the reverse orientation.
If $e, f$ are distinct directed edges then  write $e >f$  if the smallest subtree of $T$ containing $e$ and $f$ is as below.

\begin{figure}[htbp]
\centering
\begin{tikzpicture}[scale=.5]
\draw (-.5, 2)-- (1,2)--(2,1)--(3,2)--(4,1)--(5,2)--(6,1)--(7,2)--(8,1)--(9,2)--(10.5,2)  ;
\draw (.5,2) node {$>$} ;
\draw (9.5,2) node {$>$} ;
\draw (.5,2.1) node [above] {$e$} ;
\draw (9.5,2.1) node [above] {$f$} ;
\end{tikzpicture}
\end{figure}

Suppose the group $G$ acts on $T$.   We say that $g$ {\it shifts } $e$ if either $e >ge$ or $ge > e$.
If for some $e \in ET$ and some $g \in G$,  $g$ shifts $e$,   then $G$ acts non-trivially on a tree $T_e$ obtained by contracting
all edges of $T$ not in the orbit of $e$ or $\bar e$.   In this action there is just one orbit of edge pairs.
Bass-Serre theory tells us that either $G = G_u*_{G_e} G_v$ where $u, v$ are the vertices of $e$ and they are in different orbits  in the contracted tree $T_e$, or $G$ is the 
HNN-group $G = G_u *_{G_e}$ if $u,v$ are in the same $G$-orbit.   
If either case occurs we say that $G$ splits over $G_e$.

If there is no edge $e$ that is shifted by any $g \in G$, (and $G$ acts without involutions, i.e. there is no $g \in G$ such that $ge = \bar e$)
then $G$ must fix a vertex or an end of $T$.   If the action is non-trivial, it fixes an end of $T$, i.e.
$G$ is a union of an ascending sequence of vertex stabilizers,  $G = \bigcup  G_{v_n}$, where $v_1, v_2, \dots   $ is a sequence of 
adjacent vertices and $G_{v_1} \leq G_{v_2}\leq \dots $ and $G \not= G_{v_n} $ for any $n$.

Thus Theorem ~\ref{SC} could be restated as 

\begin{theo} [\cite {[St]}, \cite{[DD]}] 
A group $G$ contains a proper almost invariant subset (i.e. it has more than one end) if and only if it splits over a finite subgroup or
it is countably infinite and locally finite.
\end{theo}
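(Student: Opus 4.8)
The plan is to derive this statement directly from Theorem~\ref{SC}, treating it purely as a translation of the condition ``$G$ acts non-trivially on a tree with finite edge stabilizers'' into the stated algebraic dichotomy, using the discussion of shifts and fixed ends that precedes it. A preliminary normalisation will be convenient: after replacing the tree $T$ by its barycentric subdivision I may assume $G$ acts without inversions. The stabiliser of a half-edge $[v,m_e]$ is just the original edge stabiliser $G_e$, so edge stabilisers stay finite; and the action stays non-trivial, since a global fixed point of the subdivision is either a fixed vertex of $T$ (contradicting non-triviality) or a fixed midpoint $m_e$, the latter forcing $G \le \mathrm{Stab}(\{e,\bar e\})$, a finite group, which is impossible as $G$ has a proper almost invariant set and is therefore infinite.

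For the forward implication, suppose $G$ has a proper almost invariant subset; by Theorem~\ref{SC} it acts non-trivially on such a tree $T$ (normalised as above). I split according to whether some $g \in G$ shifts some edge $e$. If an edge is shifted, contracting all edges outside the orbit of $\{e,\bar e\}$ yields a tree with a single orbit of edge pairs, and Bass--Serre theory gives a splitting $G = G_u *_{G_e} G_v$ or $G = G_u *_{G_e}$ with $G_e$ finite, so $G$ splits over a finite subgroup. If no edge is shifted, then (inversions already removed) $G$ fixes a vertex or an end; non-triviality excludes a fixed vertex, so $G$ fixes an end $\omega$ and is the ascending union $G = \bigcup_n G_{v_n}$ along a ray $v_1, v_2, \dots$ converging to $\omega$, with $G \neq G_{v_n}$ for all $n$. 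The key point is that each $G_{v_n}$ is finite: an element fixing both $v_n$ and $\omega$ must fix the geodesic ray from $v_n$ to $\omega$ pointwise, hence fixes the edge from $v_n$ to $v_{n+1}$, so $G_{v_n}$ is contained in that (finite) edge stabiliser. Thus $G$ is an ascending union of finite groups, so it is locally finite; as a countable union of finite groups it is countable, and as a proper ascending union it is infinite.

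For the converse I reverse each case. If $G$ splits non-trivially over a finite subgroup $C$, then $G$ acts non-trivially on the associated Bass--Serre tree with edge stabilisers conjugate to $C$, hence finite, so $G$ has more than one end by Theorem~\ref{SC}. If instead $G$ is countably infinite and locally finite, write $G = \bigcup_n G_n$ as a strictly ascending union of finite subgroups $G_1 < G_2 < \cdots$. I would then exhibit the coset tree with vertex set $\bigsqcup_n G/G_n$ and an edge joining $gG_n$ to $gG_{n+1}$ for each $g$ and $n$; this is connected and acyclic because upward paths are unique and any two vertices merge at some level (as $\bigcup_n G_n = G$). Left multiplication gives a non-trivial action with finite vertex and edge stabilisers $gG_ng^{-1}$, fixing the end determined by the chain, so Theorem~\ref{SC} again yields a proper almost invariant set.

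The step I expect to be the main obstacle is the fixed-end case of the forward direction, specifically the verification that the vertex stabilisers are finite. This rests on the tree-geometric fact that an automorphism fixing a vertex together with an end must fix the connecting geodesic ray pointwise, which pins each vertex group inside a finite edge stabiliser; cleanly disposing of inversions (via the subdivision) and confirming that subdivision preserves both finiteness of edge stabilisers and non-triviality of the action are the accompanying technical points.
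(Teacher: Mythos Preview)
Your proposal is correct and follows the same route the paper sketches between Theorem~\ref{SC} and its restatement: the shift/no-shift dichotomy for the forward direction, and exhibiting a non-trivial tree action with finite edge stabilisers for the converse. The paper leaves several of the points you spell out implicit --- it does not verify that the vertex groups $G_{v_n}$ along the fixed ray are finite, nor does it write down the coset tree for a countably infinite locally finite group --- so your argument is a faithful completion of the paper's outline rather than a different method. One small difference in the converse: the paper produces the almost invariant set uniformly from any non-trivial action via $A = G[e,\iota e]$, whereas you invoke Theorem~\ref{SC} as a black box after building the tree; both are fine, and your explicit coset-tree construction is exactly what is needed to feed into either version.
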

If a group splits over a finite subgroup, then it is possible to choose a generating set $S$ so that the Cayley graph has more than one end.
However for a countably infinite locally finite group there is no Cayley graph with more than one end.

The if part of the theorem is fairly easy to prove.  We now prove  a stronger version of the if part, following \cite {[D]}.

Let $H$ be a subgroup of $G$.   A subset $A$ is  $H$-{\it finite} if $A$ is contained in finitely many right $H$-cosets, i.e. for some finite
set $F$,   $A \subseteq HF$. 
A subgroup $K$ is $H$-finite if and only if $H\cap K$ has finite index in $K$.
   Let $T$ be a $G$-tree and suppose there is an edge $e$ and  vertex $v$.

We say that $e$ {\it  points at} $v$ if there is a subtree of $T$ as below.
We write $e \rightarrow v$.

\begin{figure}[htbp]
\centering
\begin{tikzpicture}[scale=.5]
\draw (-.5, 2)-- (1,2)--(2,1)--(3,2)--(4,1)--(5,2)--(6,1)--(7,2)--(8,1)--(9,2)--(10.5,2)  ;
\draw (.5,2) node {$>$} ;
\draw (.5,2.1) node [above] {$e$} ;
\draw (10.5,2.1) node [above] {$v$} ;
\draw (10.5,2) node {$\bullet$} ;

\end{tikzpicture}
\end{figure}

Let $G[e, v] = \{g \in G | e \rightarrow gv\}$.

If $h \in G$, then $G[e, v]h = G[e,h^{-1}v]$,  since if $e\rightarrow gv,  e \rightarrow gh (h^{-1}v)$.

It follows from this that If $K = G_v$, then $G[e,v]K =G[e,v]$.
Also if $H = G_e$, then $HG[e,v] = G[e,v]$.

If $v = \iota e $, then $G_e = H \leq K = G_v$ and if $A = G[e, \iota e]$, then $A= HAK$.

\begin{figure}[htbp]
\centering
\begin{tikzpicture}[scale=.5]
\draw (-2, 2)-- (2,2) ;
\draw (.0,2) node {$>$} ;
\draw (2,2) node {$\bullet$} ;
\draw (-2,2) node {$\bullet$} ;

\draw (.5,2.1) node [above] {$e$} ;
\draw (-2,2.1) node [above] {$v$} ;
\end{tikzpicture}
\end{figure}

Consider the set $Ax,  x\in G$.  If $g \in A, gx \notin A$ , then $e \rightarrow gv,  \bar e \rightarrow gx v$.
This means that $e$ is on the directed  path joining $gxv$ and $gv$.  This happens if and only
if $g^{-1}e$ is on the path joining $xv$ and $v$.  There are  only finitely many directed  edges in the $G$-orbit
of $e$ in this path.  Hence $g^{-1} \in FH$, where $F$ is finite, and $H = G_e$, and $g \in HF^{-1}$.
Thus $A -Ax^{-1} = HF^{-1}$, i.e. $A- Ax^{-1}$ is $H$-finite.  It follows that both $Ax - A$ and $A-Ax$ are $H$-finite
and so $A + Ax$ is $H$-finite for every $x \in G$,  i.e. $A$ is an  $H$-{\it almost invariant set}.

  If the action on $T$ is non-trivial, then neither $A$ nor $A^*$ is $H$-finite.   We say that $A$ is {\it proper}.
  
  Peter Kropholler has conjectured that the following generalization of Theorem ~\ref{SC} is true for finitely generated groups.
    \begin{conjec}\label{KC}   Let $G$ be a  group and let $H$ be a subgroup.   If there is a proper $H$-almost invariant subset $A$
  such that $A = AH$, then $G$ has a non-trivial action on a tree in which $H$ fixes a vertex $v$ and  every edge  incident with $v$ has an $H$-finite stabilizer.
  
  \end{conjec}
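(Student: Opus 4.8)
The forward computation just given shows that a $G$-tree in which $H$ fixes a vertex and the incident edges have $H$-finite stabilizers produces exactly a proper $H$-almost invariant set $A$ with $A=AH$. The conjecture is the converse, so the plan is to manufacture such a tree from the given $A$. I would work in a graph $X$ on which $G$ acts by left translation (a Cayley graph when $G$ is finitely generated; for general $G$ the abstract structure-tree setting of the associated cuts). The hypothesis that $A+Ax$ is $H$-finite for every $x$ says precisely that the edge boundary $\delta A$ meets only finitely many left $H$-orbits of edges; equivalently $\delta A$ has finite size once the size of an edge cut is measured by the number of $H$-orbits of edges it contains. Thus $A$ is an edge cut of finite ($H$-)size in the network $X$, and properness (neither $A$ nor $A^*$ is $H$-finite) says that this cut genuinely separates the relative ends. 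The whole $G$-orbit $\{gA : g\in G\}$ consists of such cuts, and the goal is to replace it by a $G$-invariant \emph{nested} family $\mathcal{E}$ of $H$-almost invariant sets, from which a $G$-tree $T$ is assembled in the usual way (directed edges of $T$ correspond to members of $\mathcal{E}$, vertices to their compatible orientations).

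Granting such a family $\mathcal{E}$ and tree $T$, the three assertions follow by reading the forward computation backwards. \emph{Nontriviality}: because $A$ is proper, $\mathcal{E}$ is nonempty and no single vertex is compatible with all its members, so $G$ fixes no vertex and the action is non-trivial. \emph{$H$ fixes a vertex}: in a tree built from a nested family the left stabilizer of the vertex that a cut points at coincides with its right stabilizer, exactly as the identity $G[e,v]k=G[e,k^{-1}v]$ above yields $\{k : Ak=A\}=G_v$; since $A=AH$ puts $H$ inside the right stabilizer of the cut, $H$ lies in a vertex stabilizer, i.e. $H$ fixes the corresponding vertex $v$. \emph{$H$-finite edge stabilizers at $v$}: the edges incident with $v$ correspond to members of $\mathcal{E}$ whose boundaries are $H$-finite edge cuts, and the stabilizer of such an edge preserves a finite set of $H$-orbits of boundary edges, so its intersection with $H$ has finite index in it, which is precisely $H$-finiteness of the edge stabilizer.

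The real work, and the step I expect to be the main obstacle, is the construction of the nested $G$-family $\mathcal{E}$. A single cut and its translates $gA$ typically \emph{cross} one another, so $A$ alone does not organise into a tree; one must trade the $gA$ for an optimally chosen crossing-free family while keeping it $G$-invariant and compatible with the right-$H$-symmetry that produced the fixed vertex. This is exactly what the structure-tree theory for networks is built to do: one passes to cuts of minimal $H$-size and uses the generalisation of the Max-Flow Min-Cut theorem to the infinite network $X$ to show that such minimal cuts exist, are controlled, and can be assembled into a canonical nested family. The point where care is essential is that $H$ may have infinite index, so $X$ is genuinely infinite and the classical finite Menger argument must be replaced by its infinite version; moreover one must check that the family extracted respects the left $G$-action and the right-$H$-invariance simultaneously, since it is the interaction of these two that delivers at once the non-trivial $G$-tree and the $H$-fixed vertex with $H$-finite incident stabilizers.
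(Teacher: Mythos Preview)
Your outline has the right shape but skips the genuinely hard step and misidentifies where the difficulty lies. First, note that the paper does \emph{not} prove the conjecture in full; it proves it only under the extra hypothesis that $G$ is finitely generated over $H$ (generated by $H$ together with a finite set). This hypothesis is what allows one to build a connected $G$-graph $X$ with $VX=G/H$, finitely many edge orbits, and a vertex $o$ with stabiliser $H$, so that ``$A+Ax$ is $H$-finite for all $x$'' becomes ``$\delta A$ lies in finitely many $H$-orbits of edges''. Your parenthetical ``for general $G$ the abstract structure-tree setting'' is not a substitute for this; without some such graph the translation from almost-invariance to finite coboundary breaks down.

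The substantive gap is in how you obtain the nested $G$-invariant family $\mathcal{E}$. You say structure-tree theory produces it, but the capacity you want to use---number of $H$-orbits in $\delta A$---is \emph{not} $G$-invariant: the translate $gA$ has coboundary lying in finitely many $gHg^{-1}$-orbits, not $H$-orbits. So the canonical structure-tree machinery, which needs a fixed capacity function, cannot be run $G$-equivariantly. The paper's route around this is the real content: apply structure-tree theory to the quotient graph $H\backslash X$ to get a canonical nested set $\mathcal{E}(H\backslash X)$, lift it to a nested $H$-set $\bar{\mathcal{E}}$ in $X$ (giving an $H$-tree $\bar T(H)$, on which $H$ fixes the image of $o$), and then prove that the $G$-translates $g\bar{\mathcal{E}}$ are \emph{mutually} nested. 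That last step is Kropholler's corner lemma: if $A$ is $H$-almost invariant with $o\in gB^*$, $go\in A^*$, then $\delta(A\cap gB)$ is $(H\cap gHg^{-1})$-finite, which forces one corner to be empty when $A,B$ lie in the canonical family. This lemma is absent from your sketch, and without it there is no reason the $G$-orbit of a nested $H$-family stays nested. Your argument for why $H$ fixes a vertex is also off: it is not that left and right stabilisers of a cut agree, but simply that $o\in VX$ has stabiliser $H$ and the structure map $\nu:VX\to V\bar T(H)$ is an $H$-map, so $H$ fixes $\nu(o)$.
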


We have seen that  the conjecture   is true if $H$  has one element.    The conjecture has been
proved  for $H$  and $G$ satisfying extra conditions by 
Kropholler \cite {[K90]},  Dunwoody and Roller \cite {[DR]} ,  Niblo \cite {[N]} and  Kar and Niblo \cite {[NK]}.

 If $G$ is the triangle
group $G = \langle a, b | a^2 = b^3 = (ab)^7 =1\rangle$, then $G$ has an infinite cyclic subgroup $H$ for which there 
is a proper $H$-almost invariant set.   Note that in this case $G$ has no non-trivial action on a tree, so the condition $A = AH$ is
necessary in  Conjecture~\ref{KC}.



\begin{center}
    \includegraphics[width=10cm]{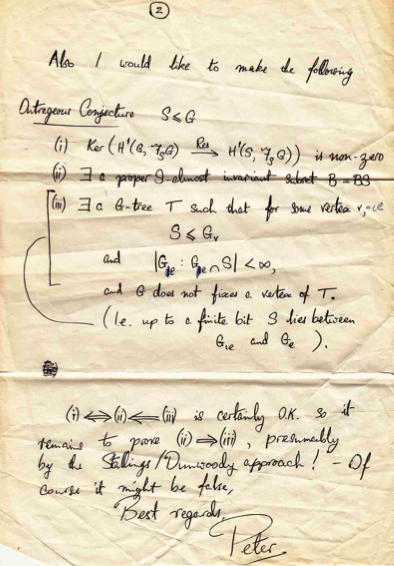}
\end{center}


A discussion of the Kropholler Conjecture is given in \cite{NS}.  I first learned of this conjecture in a letter Peter wrote to me in January 1988,  a page of which is shown here.

We give a proof of the conjecture when $G$ is finitely generated over $H$, i.e. it is generated by $H$ together with a finite subset.

I am very grateful to Peter Kropholler for enjoyable discussions and a very helpful email correpondence about his conjecture.




\renewcommand{\d}{{\rm d}}



The main tool in proving Conjecture ~\ref {KC}  is the theory of structure trees in connected graphs which was initiated in \cite {[D2]} and \cite {[DD]}.
In the next section  a fairly self contained account of this theory is given.     In fact the theory is extended to apply to networks and 
it is shown that the sequence of structure trees obtained for a network is  uniquely determined.
It is this property that is crucial in proving the Kropholler Conjecture.

Another very interesting aspect of extending the theory to networks is that one can obtain non-trivial result for a finite network.
Results for finite networks such as the Max-Flow Min-Cut Theorem (MFMC) and the existence of a Gomory-Hu tree are shown to be special
cases of our results for more general networks.   It is also the case that Stallings' Theorem on the structure of groups with more than one end also
follows from the theory developed here.     It is very pleasing (to me at least) that there is a theory that includes both the Stallings' Theorem and the MFMC.

If $A$ is an almost invariant set,  and $M  = \{ B | B =_a A\}  $ so that  for  $B, C   \in M ,     B +C  = F$ where $F$ is finite.
then  the Almost Stability Theorem of \cite {[DD]}
 shows  that $M$ is the vertex set of a $G$-tree $T$.

We can  define a metric on $M$.
For $B, C \in M$ define $d(B, C) =|B+C|$, and this is a a geodesic metric on $T$.  In the final section a generalisation of this result is proved.
If $H$ is a subgroup of $G$, and $A = HAH$ is an $H$-almost invariant subset,  we now put $M = \{ B | A + B=  HF \} $ where $F$ is finite,  then 
$M$ is a right $G$-set, i.e. it admits an action of $G$ by right multiplication.
If for $B, C \in M$ we put $d(B, C)$ to be the number of cosets $Hx$ in $B+C$, then we have a metric on $M$ as before.    It is
again shown that this is a metric on a tree, thus giving the $H$-almost stability theorem.

At the risk of appearing self-indulgent, I record the history  of the theory of structure trees.

As noted above,
in his breakthrough work  \cite {[St]}  on groups with more than one end, in the late sixties,  Stallings showed that a finitely generated group has a Cayley graph (corresponding to a finite generating set) with more than one end if
and only if it has a certain structure.   At about that time Bass and Serre (see \cite {[DD]} or \cite {[S]})  developed their theory of groups acting on trees and it was clear that
the structure of  a  group with more than one end, as  in Stallings'  Theorem,  was associated with an action on a tree.   In \cite {[D1]} I gave a proof of Stallings' result by constructing
a tree on which the relevant group acted.   This involved showing that if the finitely generated  group $G$  had more than one end, then there is a subset $B \subset G$ such that both $B$ and $B^*$ are infinite,  $\delta B$ is finite, and the set $\ce = \{ gB | g \in G\} $ is almost nested.   
Here we define a cut in a graph $X$ to be a subset $A$ of  $VX$ such that $\delta A$ if finite,  where $\delta A$ is the set of edges with one vertex in 
$A$ and one vertex in $A^* = VX - A$.  The set of all cuts is denoted $\B X$.

A set $\ce $ of cuts is almost nested if for every $A, B \in \ce$ at least one corner of $A$ and $B$ is finite.   A corner of $A, B$ is one of the four sets 
$A\cap B, A^*\cap B, A\cap B^*, A^*\cap B^*$.

In \cite {[D2]} I gave a stronger result by showing that if a group $G$ acts on a graph $X$ with more than one end, then there exists a subset 
$B \in \B X$ such that $B$ and $B^*$ are both infinite and for any $g \in G$ the sets $B$ and $g B$ are nested, i.e. at least one of the four corners
is empty.    The set of all such $gB$ can 
be shown to be the edge set of a tree, called a structure tree.

This result was further extended by Warren Dicks and myself   \cite {[DD]}.  In Chapter II of that book it is shown  that for any graph $X$ the Boolean ring $\B X$
has a particular nested set of generators invariant under the automorphism group of $G$.   
    At the time I thought that the result when applied to  finite graphs was of little interest.
This was partly because an action of a group on a finite tree is always trivial, i.e. there is always a vertex of the tree fixed by the whole group.
This is not the case for groups acting on infinite trees: the theory of such actions is the subject matter of Bass-Serre theory.
Also for a finite graph $X$, there is always a nested set of generators for $\B X$ consisting of single elements subsets.
The belated realisation that the theory developed in \cite {[DD]} might be of some significance for finite networks occurred only recently.

In 2007 Bernhard Kr\" on asked me if one could develop a theory of structure trees for graphs with more that one vertex end rather than
more than one edge end.    These are connected graphs that have more than one infinite component after removing finitely many vertices.
We were able to develop such a theory in \cite {[DK]}.  In the course of  our work on this,  we realised that we could develop a theory of  structure trees 
for finite graphs that generalised the theory of Tutte \cite {[T]}, who obtained a structure tree result for $2$-connected finite graphs that are not $3$-connected.     The theory for vertex cuts is  more complicated than that for edge cuts.
In 2008 I learned about the cactus theorem for min-cuts from Panos Papasoglu.   This theory, due to Dinits, Karsanov and Lomonosov   \cite {[DKL]} (see also \cite {[FF]}) is for finite networks.
It is possible, with a bit more work, to deduce the cactus theorem  from the proof of  Theorem \ref {maintheorem} .
Evangelidou and Papasoglu  \cite {[EP]} have obtained a cactus theorem for edge cuts in infinite graphs, giving a new proof of Stallings' Theorem.
In \cite {[DW]} Diekert and Weiss gave a definition for thin cuts, which is equivalent to the one given in \cite {[DD]},
but which made more apparent the connection with the Max-Flow Min-Cut Theorem.  I also had a very helpful email exchange with Armin Weiss.
Weiss told me about Gomory-Hu trees that are structure trees in finite networks.  

Thinking about these matters finally led me  to think about structure trees for edge cuts in finite graphs and networks and the realisation
that the theory developed in \cite {[DD]} might be of some interest  when applied  to finite networks.

In Section 2 the theory for finite networks is recalled.  The theory is then
generalised to arbitrary networks.    
For any network $N$ we obtain a canonically  determined sequence of trees $T_n$ that provide complete information
about the separation of a pair $s,t$ where each of $s$ and $t$ is either a vertex or an end of $X$.   
It is  possible to obtain all such information from a single tree $T_n$ if $X$ is {\it accessible}.   A graph is accessible if there is an integer
$n$ such that any two ends can be separated by removing at most $n$ edges.  This definition is due to Thomassen and Woess \cite {thomassen1993}.    Other ways of defining accessibility of graphs are discussed.
   There are locally finite vertex transitive
graphs that are inaccessible.  Such graphs are constructed in \cite {Dun} or \cite {Dunwoody1993}.   

The situation for edge cuts contrasts with the situation for vertex cuts.   Thus there is a canonically determined sequence of trees that
separates a pair $s,t$ from the set of vertices or ends of the graph $X$.   For vertex cuts, one can only find a canonically defined structure tree
that separates a pair $\kappa $-inseparable sets or a pair of vertex ends,  where $\kappa $ is the smallest integer for which it is possible to separate
such a pair.

Structure tree theory has been used by several authors to classify infinite graphs that have more than one end and which satisfy different 
transitivity condition.
 For example Macpherson \cite {M} used a structure tree  to classify infinite locally finite distance transitive graphs, and M\" oller \cite {Mo} used these methods
 to classify infinite ended locally finite graphs for which the automorphism group acted transitively on the ends.
In \cite {thomassen1993} Thomassen and Woess obtain a number of results using structure trees.   They show for example  that if $r$ is prime, then 
 a connected, $r$-regular, $1$-transitive graph with  more than one end, is a tree.
  
 This paper  incorporates two papers \cite {[D4]} and \cite {[D5]} that have appeared on arXiv. 
I am very grateful to Peter Kropholler and Armando Martino who made a careful study of the  earlier papers.   I have included
their suggestions and corrections in this version.
\section {Networks and Structure Trees}
\subsection {Finite Networks}
In this subsection we define our terminology, but restrict attention  to networks based on finite graphs.  We recall the Max-Flow Min-Cut Theorem
and state the result that our more general theory gives for finite networks.  We illustrate the theory with examples.

We define a network  $N$ to be a finite simple, connected graph  $X$  and a map $c : EX \rightarrow \{1, 2, \dots \}$.

Let $s, t \in VX$.   An {\it $(s, t)$-flow } in $N$ is a map  $f :  EX \rightarrow \{ 0, 1, 2, \dots \}$  together with an assignment of a direction to each edge  $e$ so that its vertices are $\iota e$ and $\tau e$  and the following holds.

\begin {itemize}
\item [(i)]   For each $e \in EX$,  $f(e) \leq c(e)$.

\item [(ii)] If we put $f^+ (v) =  \Sigma  ( f(e) | \iota e = v ) $ and $f^-(v) = \Sigma (f (e) | \tau e = v )$, then for every
$v \in VX, v \not= s, v \not= t$,  we have $f^+(v) = f^-(v)$. 
That is, at every vertex except $s$ or $ t$, the flow into that vertex is the same as the flow out.

\end {itemize}  

\begin{figure}[htbp]
\centering
\begin{tikzpicture}[scale=.8]
\draw [-> , very thick] (0,0) -- ( 3,0) ;
\draw [very thick] (3,0) --(6,0) ;
\filldraw (0,0) circle (2pt);
\filldraw (6,0) circle (2pt);
\draw (0,.2) node [above] {$\iota e$};
\draw (3,.2) node [above] {$e$};
\draw (6,.2) node [above] {$\tau e$};

\end{tikzpicture}
\end{figure}

It is easy to show that in an $(s,t)$-flow,    $f^+(s) - f^-(s)   =  -(f^+(t) - f^-(t))$.   The {\it value} of the flow is defined to be $|f| = |f^+(s) - f^-(s)|$.
We define a {\it cut} in $X$ to be a subset  $A$  of $VX$,  $A\not= \emptyset, A\not= VX$.   If $A$ is a cut then so is its complement $A^*$.
If $N$ is a network and $A \subset VX$ is a cut, then the capacity $c(A)$ of $A$ is the sum $c(A) = \Sigma \{ c(e) | e = (u,v), u \in A. v \in A^*\}$.
We define  $\delta A$ to be the set of edges with one vertex in $A$ and one in $A^*$, so that $c(A)$ is the sum of the values $c(e)$ as $e$ ranges over the edges of $\delta A$.    We could replace each edge $e$  of $X$ with $c(e)$ edges joining the same two vertices and then have a theory
in which the capacity of a cut is the number of edges in $\delta A$.

In Figure \ref{fig:Tutte} a network is shown, together with a max-flow (which has value $7$), together with a corresponding min-cut.

\begin {theo} [The Max-Flow Min-Cut Theorem \cite {[FF1]}]  The maximum value of an  $(s, t)$-flow is the minimal capacity of a cut separating $s$ and $t$.
\end {theo}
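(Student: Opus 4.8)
The plan is to prove the two inequalities separately: that every flow value is at most every cut capacity (weak duality), and that some flow attains the minimum cut capacity. First I would establish weak duality. Fix an $(s,t)$-flow $f$ and any cut $A$ with $s\in A$ and $t\in A^*$. Summing the conservation identity over all $v\in A$, and noting that among these only $s$ contributes a nonzero term, one obtains
\[
f^+(s)-f^-(s)=\sum_{v\in A}\bigl(f^+(v)-f^-(v)\bigr)=\sum_{\iota e\in A,\,\tau e\in A^*}f(e)-\sum_{\iota e\in A^*,\,\tau e\in A}f(e),
\]
since every edge with both endpoints in $A$ cancels. As $0\le f(e)\le c(e)$, the right-hand side is at most $\sum_{\iota e\in A,\,\tau e\in A^*}c(e)\le c(A)$. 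Hence $|f|\le c(A)$ for every cut $A$ separating $s$ and $t$, giving $\max|f|\le\min c(A)$.

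For the reverse inequality I would run the Ford--Fulkerson augmenting path construction. Given a flow $f$, form the residual network in which each directed edge $e$ contributes a forward arc of residual capacity $c(e)-f(e)$ and a backward arc of residual capacity $f(e)$. An augmenting path is a directed path from $s$ to $t$ in this residual network; pushing the minimum residual capacity along it produces a new flow of strictly larger value. Starting from the zero flow and iterating, each step raises $|f|$ by a positive integer---here integrality is automatic, since $c$ takes values in $\{1,2,\dots\}$---while weak duality bounds $|f|$ above by any cut capacity. Therefore after finitely many augmentations we reach a flow admitting no augmenting path.

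When no augmenting path exists, let $A$ be the set of vertices reachable from $s$ in the residual network, so $s\in A$ and $t\in A^*$. Every edge from $A$ to $A^*$ must then be saturated, that is $f(e)=c(e)$ (otherwise its forward arc would enlarge $A$), and every edge from $A^*$ to $A$ must carry zero flow (otherwise its backward arc would enlarge $A$). Substituting into the weak-duality identity yields $|f|=\sum_{\iota e\in A,\,\tau e\in A^*}c(e)=c(A)$, so this flow attains the capacity of the cut $A$; combined with weak duality, $A$ is a minimum cut and the two extremal values coincide. I expect the main obstacle to be precisely this residual-network step together with the verification that the reachable set $A$ yields a cut whose capacity matches the flow exactly, the termination resting on the integrality of the capacities assumed in the definition of a network.
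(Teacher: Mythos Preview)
Your proposal is correct and follows essentially the same route as the paper: the paper does not prove the finite statement where it is first cited, but later (in the proof of the general MFMC) it ``repeats the usual proof'' via flow-augmenting paths, defining $S_f$ as the set of vertices reachable from $s$ by such paths and showing that when $t\notin S_f$ every edge of $\delta S_f$ is saturated, so $|f|=c(S_f)$. Your argument is the same Ford--Fulkerson scheme, with a slightly more explicit treatment of weak duality and of termination via integrality than the paper bothers to give.
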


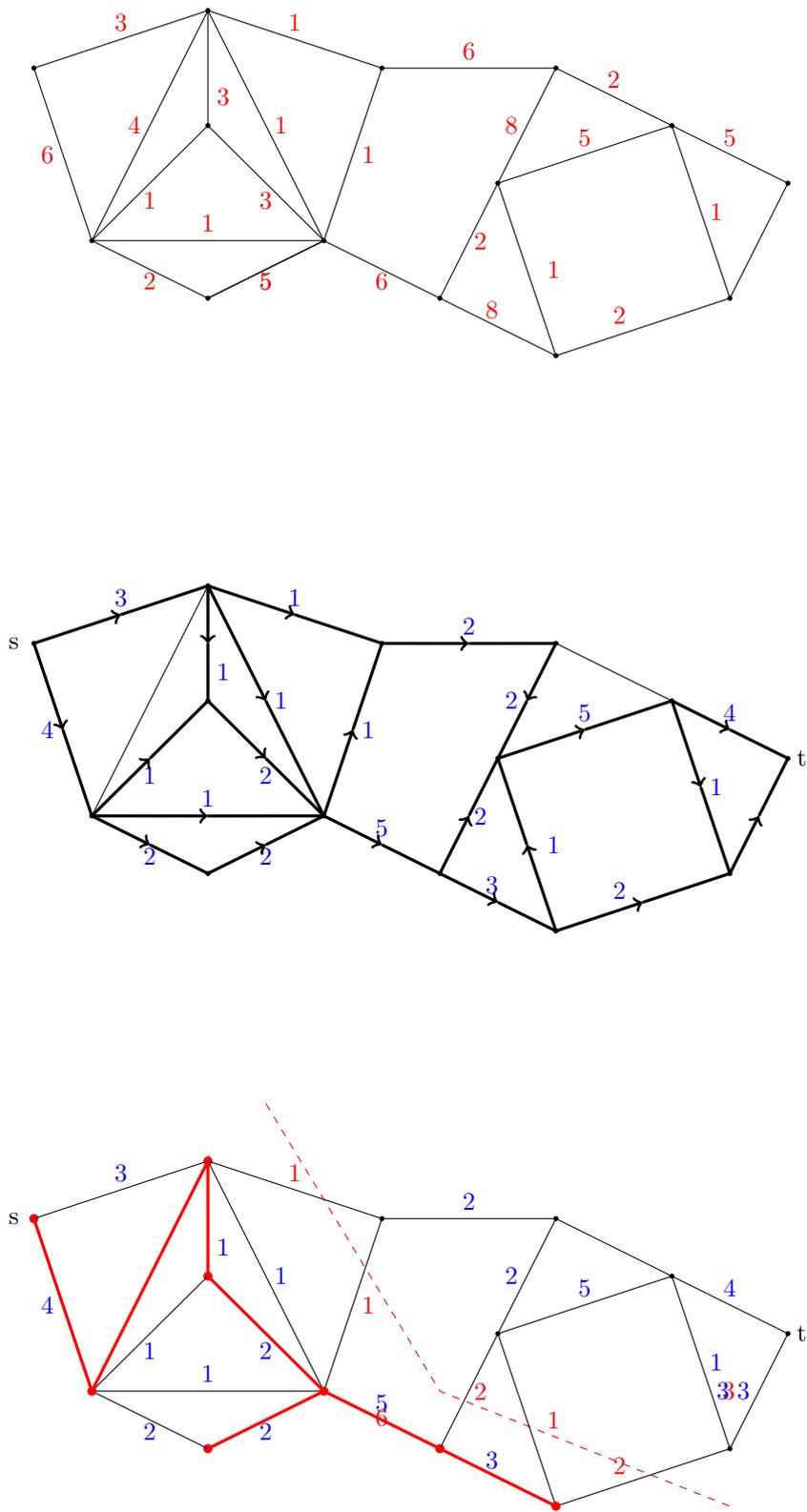
\begin{figure}[htbp]
\centering
\begin{tikzpicture}[scale=.8]


\path (5,28) coordinate (p1); 
\path (6,31) coordinate (p2);  
\path (3,32) coordinate (p3); 
\path (3,30) coordinate (p4);  
\path (0,31) coordinate (p5);  
\path (1,28) coordinate (p6);  
\path (3,27) coordinate (p7);  

\path (9,26) coordinate (p17);  
\path (9,25) coordinate (nn);  
\path (8,29) coordinate (p18);   
\path (11,30) coordinate (p19);  
\path (12,27) coordinate (p20); 
\path (7,27) coordinate (p21); 
\path (9,31) coordinate (p22); 
\path (13,29) coordinate (p23); 
\path (10.8,25.7) coordinate (p24);
\path (11.5,23.5) coordinate (p25);

\draw [red] (10,30.5) node [above] {2};

\draw [red] (12,29.5) node [above] {5};

\draw [red] (8.5,30) node [left] {8};

\draw [red] (12.25,8) node [left] {3};

\draw [red] (9.5, 29.5) node [above] {5};

\draw [red] (11.5, 28.5) node [right] {1};

\draw [red] (7.7,27.7) node [above] {2};

\draw [red] (8.7,27.5) node [right] {1};

\draw [red] (7.9,26.5) node [above] {8};

\draw [red] (10.1,26.4) node [above] {2};

\draw [red] (6,7.25) node [above] {6};

\draw [red] (4,27) node [above] {5};

\draw [red] (2,27) node [above] {2};

\draw [red] (4,27) node [above] {5};

\draw [red] (2,29) node [below] {1};

\draw [red] (4,29) node [below] {3};

\draw [red] (7.5, 31) node [above] {6} ;

\draw [red] (6, 27) node [above] {6} ;

\draw [red] (3,30.5) node [right] {3};

\draw [red] (3,28) node [above] {1};

\draw [red] (5.5, 29.5) node [right] {1};

\draw [red] (4, 30) node [right] {1};

\draw [red] (1.5,31.5) node [above] {3};

\draw [red] (4.5, 31.5) node [above] {1};

\draw [red] (.5, 29.5) node [left] {6};

\draw [red] (2, 30) node [left] {4};

\filldraw (p2) circle (1pt);
\filldraw (p3) circle (1pt);
\filldraw (p4) circle (1pt);

\filldraw (p5) circle (1pt);
\filldraw (p7) circle (1pt);

\filldraw (p18) circle (1pt);
\filldraw (p19)circle (1pt);
\filldraw (p21) circle (1pt);

\filldraw (p22) circle (1pt);

\filldraw (p23) circle (1pt);

\draw (p1)--(p6) ;
\draw (p1) -- (p2) -- (p3)--(p1)--(p4)--(p3) -- (p5)--(p6)--(p3);
\draw (p4)--(p6) ;
\draw  (p1)--(p7)--(p1) ;
\draw (p7) -- (p6) ;

\draw (p17)--(p18)--(p19)--(p20)--(p17);
\draw (p1)--(p17) ;
\draw (p20)--(p23)--(p22)--(p22)--(p21);
\draw (p2) -- (p22) ;

\filldraw (p1) circle (1pt);
\filldraw (p6) circle (1pt);
\filldraw (p17)  circle (1pt);
\filldraw (p20) circle (1pt);

\path (5,18) coordinate (p1); 
\path (6,21) coordinate (p2);  
\path (3,22) coordinate (p3); 
\path (3,20) coordinate (p4);  
\path (0,21) coordinate (p5);  
\path (1,18) coordinate (p6);  
\path (3,17) coordinate (p7);  

\path (9,16) coordinate (p17);  
\path (9,15) coordinate (nn);  
\path (8,19) coordinate (p18);   
\path (11,20) coordinate (p19);  
\path (12,17) coordinate (p20); 
\path (7,17) coordinate (p21); 
\path (9,21) coordinate (p22); 
\path (13,19) coordinate (p23); 
\path (10.8,15.7) coordinate (p24);
\path (11.5,13.5) coordinate (p25);

\draw (p1) -- (p2) -- (p3)--(p1)--(p4)--(p3) -- (p5)--(p6)--(p3);

\draw [blue] (12,19.5) node [above] {4};

\draw [blue] (8.5,20) node [left] {2};

\draw [blue] (12.15,8) node [left] {3};

\draw [blue] (9.5, 19.5) node [above] {5};

\draw [blue] (11.5, 18.5) node [right] {1};

\draw [blue] (7.7,17.7) node [above] {2};

\draw [blue] (8.7,17.5) node [right] {1};

\draw [blue] (7.9,16.5) node [above] {3};

\draw [blue] (10.1,16.4) node [above] {2};

\draw [blue] (6,17.5) node [above] {5};

\draw [blue] (4,17) node [above] {2};

\draw [blue] (2,17) node [above] {2};

\draw [blue] (2,19) node [below] {1};

\draw [blue] (4,19) node [below] {2};

\draw [blue] (7.5, 21) node [above] {2} ;

\draw [blue] (3,20.5) node [right] {1};

\draw [blue] (3,18) node [above] {1};

\draw [blue] (5.5, 19.5) node [right] {1};

\draw [blue] (4, 20) node [right] {1};

\draw [blue] (1.5,21.5) node [above] {3};

\draw [blue] (4.5, 21.5) node [above] {1};

\draw [blue] (.5, 19.5) node [left] {4};


\filldraw (p2) circle (1pt);
\filldraw (p3) circle (1pt);
\filldraw (p4) circle (1pt);

\filldraw (p5) circle (1pt);
\filldraw (p7) circle (1pt);

\filldraw (p18) circle (1pt);
\filldraw (p19)circle (1pt);
\filldraw (p21) circle (1pt);

\filldraw (p22) circle (1pt);

\filldraw (p23) circle (1pt);

\draw [->, very thick] (p6)-- (3,18) ;
\draw [ very thick] (3,18) -- (p1) ;
\draw [->, very thick] (p1) --(5.5,19.5) ;
\draw [very thick] (5.5,19.5) --  (p2)  ;
\draw [very thick] (4.5,21.5) -- (p2) ;
\draw [->, very thick] (p3) --(4.5, 21.5) ;
\draw[very thick]  (4,19)--(p1) ;
\draw [->, very thick] (p4)--(4,19);
\draw [->, very thick]  (p5) -- (1.5, 21.5) ;
\draw [very thick] (1.5, 21.5)-- (p3) ;
\draw [->, very thick] (p5) -- (.5, 19.5) ;
\draw [->, very thick] (p3) --( 3, 21) ;
\draw [very thick] (3,21) --(p4)--(2, 19)  ;
\draw [very thick] (.5, 19.5) -- (p6) ;
\draw [very thick]  (p1)--(4,20);
\draw [->, very thick]  (p3)--(4,20);

\draw [->, very thick] (p6)--(2,19) ;
\draw  [->, very thick] (p7)--(4, 17.5) ;
\draw [very thick] (4, 17.5)--(p1) ;

\draw [->, very thick] (p6) --(2, 17.5) ;
\draw [very thick] (2, 17.5) -- (p7) ;

\draw [->, very thick]  (p17)--(8.5, 17.5) ;
\draw [very thick] (8.5, 17.5)--(p18) ;

\draw [->, very thick] (p18) --(9.5,19.5)  ;
\draw [very thick] (9.5, 19.5)--(p19) ;

\draw [very thick] (p20)--(10.5, 16.5);
\draw [->, very thick] (p17)--(10.5, 16.5) ;

\draw [->, very thick] (p1)--(6,17.5) ;
\draw [very thick] (6, 17.5)--(7,17) ;

\draw [->, very thick] (7,17)--(8,16.5) ;

\draw [->, very thick] (7,17)--(7.5,18) ;
\draw [very thick] (7.5,18)--(p18) ;
\draw [very thick] (8,16.5)--(p17) ;

\draw [->, very thick] (p22)--(8.5,20) ;
\draw [very thick] (8.5, 20)--(p18) ;

\draw [ ->, very thick] (p20)--(12.5, 18) ;
\draw [very thick]  (12.5, 18) --(p23) ;
\draw [->, very thick]  (p19)-- (12, 19.5) ;
\draw [very thick] (12,19.5)--(p23) ;

\draw [->, very thick] (p19)--(11.5,18.5) ;
\draw [very thick] (11.5, 18.5)--(p20) ;

\draw [->, very thick] (p2) --(7.5, 21) ;
\draw [very thick ] (7.5,21)-- (p22) ;
\draw (p22) --(p19)  ;

\filldraw (p1) circle (1pt);
\filldraw (p6) circle (1pt);
\filldraw (p17)  circle (1pt);
\filldraw (p20) circle (1pt);

\draw (p5) node [left] {{}\hskip-5mm s};

\draw (p23) node [right] {t};



\path (5,8) coordinate (p1); 
\path (6,11) coordinate (p2);  
\path (3,12) coordinate (p3); 
\path (3,10) coordinate (p4);  
\path (0,11) coordinate (p5);  
\path (1,8) coordinate (p6);  
\path (3,7) coordinate (p7);  

\path (9,6) coordinate (p17);  
\path (9,5) coordinate (nn);  
\path (8,9) coordinate (p18);   
\path (11,10) coordinate (p19);  
\path (12,7) coordinate (p20); 
\path (7,7) coordinate (p21); 
\path (9,11) coordinate (p22); 
\path (13,9) coordinate (p23); 
\path (10.8,5.7) coordinate (p24);
\path (11.5,3.5) coordinate (p25);

\draw [dashed, red]  (4,13) --(7,8)--(12, 6);

\draw [blue] (12,9.5) node [above] {4};

\draw [blue] (8.5,10) node [left] {2};

\draw [blue] (12.5,8) node [left] {3};

\draw [blue] (9.5, 9.5) node [above] {5};

\draw [blue] (11.5, 8.5) node [right] {1};

\draw [red] (7.7,7.7) node [above] {2};

\draw [red] (8.7,7.5) node [right] {1};

\draw [blue] (7.9,6.5) node [above] {3};

\draw [red] (10.1,6.4) node [above] {2};

\draw [blue] (6,7.5) node [above] {5};

\draw [blue] (4,7) node [above] {2};

\draw [blue] (2,7) node [above] {2};

\draw [blue] (2,9) node [below] {1};

\draw [blue] (4,9) node [below] {2};

\draw [blue] (7.5, 11) node [above] {2} ;

\draw [blue] (3,10.5) node [right] {1};

\draw [blue] (3,8) node [above] {1};

\draw [red] (5.5, 9.5) node [right] {1};

\draw [blue] (4, 10) node [right] {1};

\draw [blue] (1.5,11.5) node [above] {3};

\draw [red] (4.5, 11.5) node [above] {1};

\draw [blue] (.5, 9.5) node [left] {4};


\filldraw (p2) circle (1pt);
\filldraw [red] (p3) circle (2pt);
\filldraw [red] (p4) circle (2pt);

\filldraw [red]  (p5) circle (2pt);
\filldraw [red] (p7) circle (2pt);

\filldraw (p18) circle (1pt);
\filldraw (p19)circle (1pt);
\filldraw [red] (p21) circle (2pt);

\filldraw (p22) circle (1pt);

\filldraw (p23) circle (1pt);

\draw (p1)--(p6) ;
\draw (p1) -- (p2) -- (p3) -- (p1) ;
\draw (p1)--(p4)--(p3) -- (p5)--(p6)--(p3);

\draw (p4)--(p6) ;

\draw (p7) -- (p6) ;

\draw (p17)--(p18)--(p19)--(p20)--(p17);
\draw (p1)--(p17) ;
\draw (p20)--(p23)--(p22)--(p22)--(p21);
\draw (p2) -- (p22) ;
\draw [red, very thick]   (p4)--(p1)-- (p21) -- (p17) ;
\draw [red, very thick]  (p4)-- (p3) --(p6) -- (p5)  ;
\draw [red, very thick]  (p1)--(p7)  ;
\filldraw [red] (p1) circle (2pt);
\filldraw [red] (p6) circle (2pt);
\filldraw [red] (p17)  circle (2pt);
\filldraw (p20) circle (1pt);

\draw (p5) node [left] {{}\hskip-5mm s};

\draw (p23) node [right] {t};

\end{tikzpicture}
\vskip-2mm\caption{Max-Flow Min-Cut Theorem}\label{fig:Tutte}\vskip-3mm \end{figure}
In the proof of this result it is shown that one obtains a min-cut from a max-flow as the set of vertices that are connected to $s$ by a path in which each edge has some unused capacity.    Thus in Figure \ref {fig:Tutte}  the  min-cut  vertices are shown in red and the  edges with unused capacity used in the construction of the max-flow are also shown
in red.

In this paper it is shown that for any finite network there is a uniquely determined network based on a {\it structure tree} that provides a convenient way of encoding the minimal flow between
any pair of vertices.  Specifically we will prove the following theorem.

\begin {theo}\label {maintheorem}  Let $N(X)$ be a finite network.    There is a uniquely determined network $N(T)$ based on a tree  $T$ and an injective map $\nu : VX \rightarrow VT$, such
that  the maximum value of an $(s,t)$-flow in $X$ is the maximum value of a $(\nu s, \nu t)$-flow in $N(T)$.   Also,  for any edge $e' \in ET$, there are vertices
$s, t \in VX$ such that $e'$ is on the geodesic joining $\nu s$ and $\nu t$ and $c(e')$ is the capacity of a minimal $(s,t)$-cut.

\end {theo}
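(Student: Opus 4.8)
The plan is to realise $T$ as a \emph{structure tree} whose edges form a canonically chosen nested family of cuts of $X$, and to read off the flow identity from the Max-Flow Min-Cut Theorem. The basic tool throughout is submodularity of the capacity function: for any two cuts $A, B \in \B X$,
\[
c(A) + c(B) \geq c(A \cap B) + c(A \cup B), \qquad c(A) + c(B) \geq c(A \cap B^*) + c(A^* \cap B),
\]
obtained by checking, edge by edge, the contribution to each side according to the corner of $A,B$ in which the endpoints of the edge lie. By the Max-Flow Min-Cut Theorem the maximum value of an $(s,t)$-flow equals the minimum capacity of a cut separating $s$ and $t$; call such a cut a \emph{minimal $(s,t)$-cut}. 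The standard uncrossing argument then shows that if two minimal cuts cross, so that all four corners $A\cap B,\ A\cap B^*,\ A^*\cap B,\ A^*\cap B^*$ are nonempty, submodularity forces one of the nested corners to again be a minimal cut; hence minimal cuts may always be chosen nested.

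The crux is to single out one nested (laminar) family $\ce \subseteq \B X$ that simultaneously contains a minimal $(s,t)$-cut for every pair $s,t$ and is forced to be unique. I would first produce such an $\ce$ by the Gomory-Hu procedure, repeatedly adjoining a minimal cut for a pair not yet optimally separated and uncrossing it against the cuts already present, verifying at each step that nestedness is preserved. To obtain uniqueness I would then characterise the members of $\ce$ intrinsically, as those minimal cuts that are extremal in the Boolean ring $\B X$ (not expressible as a sum of strictly smaller minimal cuts), so that $\ce$ depends only on $N$ and is invariant under $\mathrm{Aut}(N)$; this matches the canonical nested generating set of $\B X$ from \cite{[DD]} and pins down $N(T)$ uniquely.

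Given $\ce$, the tree $T$ is assembled in the usual way: its vertices are the \emph{blocks}, namely the classes of vertices of $X$ separated by no member of $\ce$, and its edges are the complementary pairs of cuts in $\ce$, with the capacity of an edge $e'$ set equal to the capacity of the corresponding cut $A_{e'}$. The map $\nu$ sends each vertex to its block and is injective because distinct vertices are separated by some minimal cut in $\ce$. Since $T$ is a tree, a minimal $(\nu s,\nu t)$-cut is simply a lowest-capacity edge on the geodesic from $\nu s$ to $\nu t$; by construction that geodesic meets exactly the cuts of $\ce$ separating $s$ from $t$, whose smallest capacity is the minimal $(s,t)$-cut value in $X$. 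This yields the flow identity, and it shows conversely that each edge $e'$ realises its capacity $c(e')$ as a minimal $(s,t)$-cut for the endpoints of the geodesic on which it lies.

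The step I expect to be hardest is the canonical nesting: proving that the intrinsically defined extremal minimal cuts cannot cross. This is where the two submodular inequalities must be combined with the extremality choice to exclude all four corners being nonempty, and it is precisely what upgrades the classical, non-unique Gomory-Hu tree to the uniquely determined structure tree asserted here.
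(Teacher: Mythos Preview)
Your outline has the right large-scale architecture---produce a canonical nested family $\ce$ of thin cuts, build the tree from $\ce$, and read off the flow identity from MFMC---but the mechanism you propose for \emph{canonicity} is not the one that actually works, and your description of the tree is slightly off.

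The paper does not characterise the members of $\ce$ as ``extremal'' elements of $\B X$ in any Boolean-ring sense; the construction is instead inductive on capacity. One sets $\ce_1=\C_1$, and having built $\ce_{n-1}$, one first restricts to those thin $n$-cuts $A$ that are already nested with everything in $\ce_{n-1}$ (call this $\D_n$), and then keeps only those $A\in\D_n$ which, for some pair $u,v$ they separate thinly, \emph{minimise the crossing number} $\mu(A,\D_n)$ among all $\D_n$-cuts thin for $u,v$. The heart of the argument (Lemmas~\ref{thincorner}, \ref{corner}, \ref{corners_equality} and Theorem~\ref{cE}) is that two such optimally nested cuts cannot cross: if they did, submodularity forces a corner with strictly smaller $\mu$, contradicting optimality. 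This is the step you correctly flag as hardest, but your ``extremality'' criterion is too vague to carry it; the precise invariant that decreases is $\mu$, and the case analysis requires tracking which corners contain the witnessing pairs $x,y$ and $x',y'$.

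Secondly, the vertices of $T$ are \emph{not} the blocks of $VX$ under $\ce$. The tree is built abstractly from the nested set (Theorem~\ref{tree}): its directed edges are the elements of $\ce$, and its vertices are certain orientation functions on $\ce$. This tree can and typically does have vertices outside the image of $\nu$ (the vertex $z$ in Figure~\ref{fig:Tree} is an example). Your block construction would force $\nu$ to be a bijection, which collapses the structure tree to a Gomory--Hu tree---and those are precisely the trees that fail to be unique. The extra vertices are what make the structure tree canonical and $\mathrm{Aut}(N)$-invariant.
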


An example of a network and its structure tree are shown in Figure \ref {fig:Tree} .      Thus in this network the max-flow between $u$ and $p$ is $12$.
One can read off a corresponding min-cut by removing the corresponding edge from the  structure tree.  Thus a min-cut separating $u$ and $p$ is
 $\{q,  r, s, t, u, v, w  \} $.    The map $\nu $ need not be surjective.    In our example there is a single vertex  $z$  that is not in the image
 of $\nu $ shown in bold.     One can get a structure tree for which $\nu $ is bijective by contracting one of the four edges incident with this vertex.
 The tree then obtained is a Gomory-Hu tree \cite {GH}.
The structure tree constructed in the proof of Theorem \ref {maintheorem} is uniquely determined and is therefore invariant under the automrophism
group of the network.    The tree obtained by contracting one of the four edges is no longer uniquely determined as one gets a different
tree for each of the four choices.
 In some cases this would mean that the structure tree  did not admit
 the automorphism group of the network.   Thus for example if the automorphism group of $X$ is transitive on $VX$ and $c(e) = 1$ for every edge,
 then the structure tree would have $n$ vertices of degree one, where $n = |VX|$ and one vertex of degree $n$.  Clearly this structure tree will admit
 the automorphism group of $X$, but if one edge is contracted to get a tree with $n$ vertices, then the new tree will not admit the automorphism group.
 
 Not every min-cut separating a pair of vertices can be obtained from the structure tree.   The min-cuts obtained are the ones that are optimally nested with the cuts of equal or smaller capacity.    In our example there are four cuts of capacity $12$ corresponding to edges in the structure tree incident with $z$.   However there are other cuts of capacity $12$.    Thus there are two min-cuts in the structure tree separating $k$ and $h$,
 but there are in fact four min-cuts separating $k$ and $h$.    In  \cite {[DKL]}  it is shown that the min-cuts separating two vertices correspond to the edge cuts 
 in a cactus, which is a connected graph in which each edge belongs to at most one cycle.
 The cactus of min-cuts separating $k$ and $h$ is a $4$-cycle.

\begin{figure}[htbp]
\centering
\begin{tikzpicture}[scale=.7]
\
\path (5,8) coordinate (p1); 
\path (6,11) coordinate (p2);  
\path (3,12) coordinate (p3); 
\path (3,10) coordinate (p4);  
\path (0,11) coordinate (p5);  
\path (1,8) coordinate (p6);  
\path (3,7) coordinate (p7);  
\path (3,4) coordinate (p8);  
\path (1,4) coordinate (p9);  
\path (1,2) coordinate (p10);  
\path (3,2) coordinate (p11);  
\path (5,4) coordinate (p12);   
\path (5,2) coordinate (p13);   
\path (5,0) coordinate (p14);   
\path (7,2) coordinate (p15);   
\path (7,4) coordinate (p16);   
\path (9,6) coordinate (p17);  
\path (9,5) coordinate (nn);  
\path (8,9) coordinate (p18);   
\path (11,10) coordinate (p19);  
\path (12,7) coordinate (p20); 
\path (7,7) coordinate (p21); 
\path (9,11) coordinate (p22); 
\path (13,9) coordinate (p23); 
\path (10.8,5.7) coordinate (p24);
\path (11.5,3.5) coordinate (p25);

\draw [red] (10,10.5) node [above] {2};

\draw [red] (12,9.5) node [above] {1};

\draw [red] (8.5,10) node [left] {8};

\draw [red] (12.5,8) node [left] {3};

\draw [red] (9.5, 9.5) node [above] {2};

\draw [red] (11.5, 8.5) node [right] {1};

\draw [red] (7.7,7.7) node [above] {2};

\draw [red] (8.7,7.5) node [right] {1};

\draw [red] (7.9,6.5) node [above] {8};

\draw [red] (10.1,6.4) node [above] {3};

\draw [red] (7.5,4.5 ) node [above] {2};

\draw [red] (8,4) node [right] {1};

\draw [red] (6,1) node [above] {8};

\draw [red] (7,3) node [right] {3};

\draw [red] (7,5 ) node [above] {3};

\draw [red] (6,4) node [above] {1};

\draw [red] (6,2) node [above] {4};

\draw [red] (5,3) node [right] {7};

\draw [red] (5,1) node [right] {1};

\draw [red] (6,7.5) node [above] {6};

\draw [red] (4,3) node [above] {1};

\draw [red] (4,2) node [above] {4};

\draw [red] (4,1) node [above] {7};

\draw [red] (4,7) node [above] {5};

\draw [red] (2,3) node [right] {5};

\draw [red] (2,2) node [above] {1};


\draw [red] (2,7) node [above] {2};

\draw [red] (4,7) node [above] {5};



\draw [red] (2,9) node [below] {1};

\draw [red] (4,9) node [below] {2};

\draw [red] (1, 6) node [left] {6};

\draw [red] (1,3) node [left] {4};

\draw [red] (3,10.5) node [right] {3};

\draw [red] (3,8) node [above] {1};

\draw [red] (5.5, 9.5) node [right] {6};

\draw [red] (4, 10) node [right] {1};

\draw [red] (1.5,11.5) node [above] {3};

\draw [red] (4.5, 11.5) node [above] {1};

\draw [red] (.5, 9.5) node [left] {6};

\draw [red] (2, 10) node [left] {4};

\filldraw (p2) circle (1pt);
\filldraw (p3) circle (1pt);
\filldraw (p4) circle (1pt);
\filldraw (p5) circle (1pt);
\filldraw (p7) circle (1pt);
\filldraw (p9) circle (1pt);
\filldraw (p10) circle (1pt);
\filldraw (p11) circle (1pt);
\filldraw (p12) circle (1pt);
\filldraw (p13) circle (1pt);
\filldraw (p14) circle (1pt);
\filldraw (p15) circle (1pt);
\filldraw (p16) circle (1pt);
\filldraw (p18) circle (1pt);
\filldraw (p19)circle (1pt);
\filldraw (p21) circle (1pt);
\filldraw (p22) circle (1pt);
\filldraw (p23) circle (1pt);

\draw (p1)--(p6) ;
\draw (p1) -- (p2) -- (p3)--(p1)--(p4)--(p3) -- (p5)--(p6)--(p3);
\draw (p4)--(p6) ;
\draw  (p1)--(p7)--(p1) ;
\draw (p7) -- (p6) -- (p9)--(p10)--(p11)--(p9);
\draw (p11)--(p15) ;-
\draw (p12)--(p14)--(p11)--(p12);
\draw (p14)--(p15)--(p16)--(p12)--(p17)--(p16);
\draw (p15)--(p17)--(p18)--(p19)--(p20)--(p17);
\draw (p1)--(p17) ;
\draw (p20)--(p23)--(p22)--(p22)--(p21);

\filldraw (p1) circle (1pt);
\filldraw (p6) circle (1pt);
\filldraw (p17)  circle (1pt);
\filldraw (p20) circle (1pt);

\draw (p1) node [right] {\hskip1.5mm a};
\draw (p2) node [right] {\hskip0.6mm b};
\draw (p3) node [above] {c};
\draw (p4) node [below] { d};
\draw (p7) node [below] {\hskip2.5mm e};
\draw (p5) node [left] {{}\hskip-5mm g};
\draw (p6) node [left] {h};
\draw (p9) node [left] {i};
\draw (p10) node [left] { j};
\draw (p11) node [below] {\hskip-2mm k};
\draw (p12) node [above] {\hskip-2mm l};
\draw (p13) node [above] {\hskip-5mm m};
\draw (p14) node [below] {n};
\draw (p16) node [below] {\hskip-4.4mm o};
\draw (p15) node [below] {\hskip2mm p};
\draw (nn) node [above] {\hskip0.8mm q};
\draw (p18) node [left] {r};
\draw (p19) node [above] {\hskip2mm s};
\draw (p20) node [right] {t};
\draw (p21) node [below] {\hskip-1mm u};
\draw (p22) node [above] {v};
\draw (p23) node [right] {w};

\end{tikzpicture}
\end{figure}

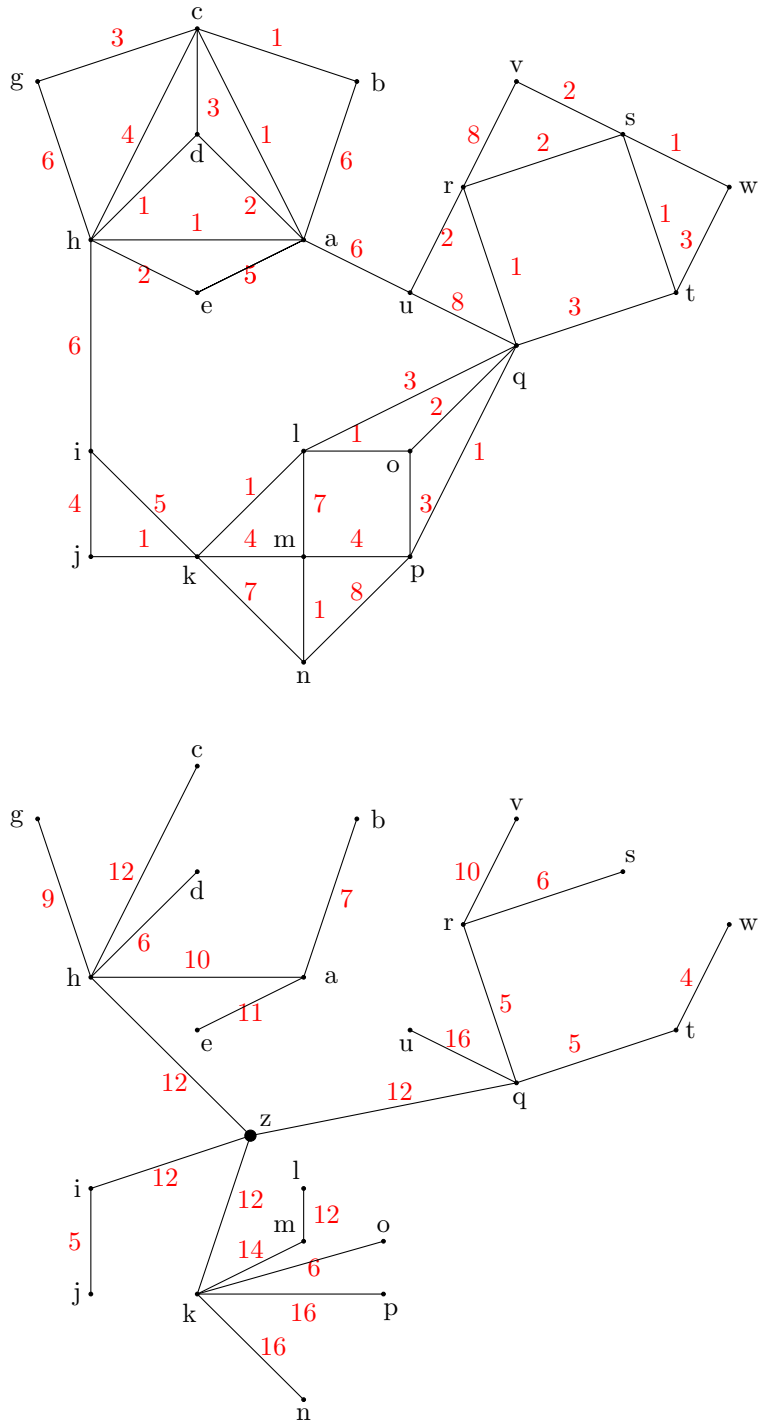
\begin{figure}[htbp]
\centering
\begin{tikzpicture}[scale=.7]
\

\path (5,8) coordinate (p1); 
\path (6,11) coordinate (p2);  
\path (3,12) coordinate (p3); 
\path (3,10) coordinate (p4);  
\path (0,11) coordinate (p5);  
\path (1,8) coordinate (p6);  
\path (3,7) coordinate (p7);  
\path (4,5) coordinate (p8);  
\path (1,4) coordinate (p9);  
\path (1,2) coordinate (p10);  
\path (3,2) coordinate (p11);  
\path (5,4) coordinate (p12);   
\path (5,3) coordinate (p13);   
\path (5,0) coordinate (p14);   
\path (6.5,2) coordinate (p15);   
\path (6.5,3) coordinate (p16);   
\path (9,6) coordinate (p17);  
\path (9,5) coordinate (nn);  
\path (8,9) coordinate (p18);   
\path (11,10) coordinate (p19);  
\path (12,7) coordinate (p20); 
\path (7,7) coordinate (p21); 
\path (9,11) coordinate (p22); 
\path (13,9) coordinate (p23); 
\path (10.8,5.7) coordinate (p24);
\path (11.5,3.5) coordinate (p25);



\draw [red] (8.5,10) node [left] {10};

\draw [red] (12.5,8) node [left] {4};

\draw [red] (9.5, 9.5) node [above] {6};



\draw [red] (8.5,7.5) node [right] {5};

\draw [red] (7.9,6.5) node [above] {16};

\draw [red] (10.1,6.4) node [above] {5};

\draw [red] (6.8,5.5 ) node [above] {12};




\draw [red] (4,3.8 ) node {12};


\draw [red] (5,2) node [below] {16};

\draw [red] (5,3.5) node [right] {12};

\draw [red] (4,1) node [right] {16};


\draw [red] (5.5,2.5) node [left] {6};

\draw [red] (4,2.5) node [above] {14};


\draw [red] (4,7) node [above] {11};

\draw [red] (2.4,4.2) node {12};


\draw [red] (2,9) node [below] {6};


\draw [red] (3, 6) node [left] {12};

\draw [red] (1,3) node [left] {5};


\draw [red] (3,8) node [above] {10};

\draw [red] (5.5, 9.5) node [right] {7};




\draw [red] (.5, 9.5) node [left] {9};

\draw [red] (2, 10) node [left] {12};

\filldraw (p2) circle (1pt);
\filldraw (p3) circle (1pt);
\filldraw (p4) circle (1pt);
\filldraw (p5) circle (1pt);
\filldraw (p7) circle (1pt);
\filldraw (p8) circle (3pt);
\filldraw (p9) circle (1pt);
\filldraw (p10) circle (1pt);
\filldraw (p11) circle (1pt);
\filldraw (p12) circle (1pt);
\filldraw (p13) circle (1pt);
\filldraw (p14) circle (1pt);
\filldraw (p15) circle (1pt);
\filldraw (p16) circle (1pt);
\filldraw (p18) circle (1pt);
\filldraw (p19)circle (1pt);
\filldraw (p21) circle (1pt);
\filldraw (p22) circle (1pt);
\filldraw (p23) circle (1pt);

\draw (p1)--(p6);
\draw (p1) -- (p2) ;
\draw (p5)--(p6)--(p3);
\draw (p4)--(p6) ;
\draw  (p1)--(p7);
\draw (p6)--(p8)--(p9)--(p10)  ;
\draw (p11)--(p13);
\draw (p11)--(p15) ;-
\draw (p12)--(p13);
\draw (p17)--(p8)--(p11) ;
\draw (p14)--(p11) ;
\draw (p17)--(p18)--(p19) ;
\draw (p20)--(p17);
\draw (p21)--(p17) ;
\draw (p20)--(p23) ;
\draw (p22)--(p18) ;
\draw (p16)--(p11);

\filldraw (p1) circle (1pt);
\filldraw (p6) circle (1pt);
\filldraw (p17)  circle (1pt);
\filldraw (p20) circle (1pt);

\draw (p1) node [right] {\hskip1.5mm a};
\draw (p2) node [right] {\hskip0.6mm b};
\draw (p3) node [above] {c};
\draw (p4) node [below] { d};
\draw (p7) node [below] {\hskip2.5mm e};
\draw (p8) node [above] {\hskip4mm z};
\draw (p5) node [left] {{}\hskip-5mm g};
\draw (p6) node [left] {h};
\draw (p9) node [left] {i};
\draw (p10) node [left] { j};
\draw (p11) node [below] {\hskip-2mm k};
\draw (p12) node [above] {\hskip-2mm l};
\draw (p13) node [above] {\hskip-5mm m};
\draw (p14) node [below] {n};
\draw (p16) node [left,above] { o};
\draw (p15) node [below] {\hskip2mm p};
\draw (p17) node [below] {\hskip0.8mm q};
\draw (p18) node [left] {r};
\draw (p19) node [above] {\hskip2mm s};
\draw (p20) node [right] {t};
\draw (p21) node [below] {\hskip-1mm u};
\draw (p22) node [above] {v};
\draw (p23) node [right] {w};

\end{tikzpicture}
\vskip-2mm\caption{Network and structure tree}\label{fig:Tree}\vskip-3mm
\end{figure}
\eject
\subsection{The Algebra of Cuts}

Let $N$ be a network based on the graph $X$.   We now allow $X$ to be infinite.    Thus $N$ is a simple connected graph with
a map $c: EX \rightarrow  \{ 1,2, \dots  \}$.   If $A$ is a cut, i.e. a subset of $VX$ for which $\delta A$ is finite, then let $c(A) = \Sigma \{ c(e) | e \in \delta A\}$.   Note that we do not assume that $X$ is locally finite.   It is convenient from here on to allow $\emptyset $ and $VX$ to be cuts.  Thus the set of
cuts is a Boolean ring $\B X$.

A ray $R$ in $X$ is an infinite sequence $x_1, x_2, \dots $ of distinct vertices such that $x_i, x_{i+1}$ are adjacent for every $i$.
If $A$ is an edge cut, and $R$ is a ray, then there exists an integer $N$ such that for $n > N$ either $x_n \in A$ or $x_n \in A^*$.
We say that $A$ separates rays $R = (x_n), R' = (x_n')$ if for $n$ large enough either $x_n \in A, x_n' \in A^*$ or $x_n \in A^*, x_n' \in A$.
We define $R\sim R'$ if they are not separated by any edge cut.     It is easy to show that $\sim $ is an equivalence relation on the set $\Phi X$ of rays in $X$.
The set $\Omega X = \Phi X/ \sim $ is the set of {edge} ends of $X$.   An edge cut $A$ separates ends $\omega , \omega '$
if it separates rays representing $\omega , \omega '$.     A cut $A$ separates an end $\omega $ and a vertex $v  \in VX$ if for any ray representing
$\omega $,   $R$ is eventually in $A$ and $v \in A^*$ or vice versa.

A   cut  $A$  is  defined to be  {\it thin   with respect to }$u,v \in VX\cup \Omega X$ if it separates some $u,v    \in VX\cup \Omega X$ and $c(A)$ is minimal 
 among all the cuts that separate $u$ and $v $.   A cut is defined to be {\it thin} if it is thin with respect to $u, v$ for some $u,v    \in VX\cup \Omega X$.

A cut $A$ is defined to be {\it tight} if both $A$ and $A^*$ are connected, i.e if $x, y  \in A$ then there is a path joining $x,y$ whose vertices
are all in $A$, and similarly for $A^*$.

\begin {prop}  A thin cut is tight.
\end {prop}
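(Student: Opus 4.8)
The plan is to argue by contradiction using the minimality built into the definition of a thin cut: if $A$ is thin but disconnected, one of its connected pieces separates the same pair at no greater capacity, and minimality then forces the discarded pieces to be empty. Suppose $A$ is thin with respect to $u,v\in VX\cup\Omega X$, and arrange notation so that $u$ lies on the $A$-side and $v$ on the $A^*$-side of the separation. I would look at the connected components of the subgraph of $X$ induced on the vertex set $A$, and single out the component $A_0$ carrying $u$. If $u$ is a vertex, $A_0$ is simply its component; if $u$ is an end, pick a ray $R=(x_n)$ representing $u$, note that $R$ is eventually in $A$, and observe that a tail of $R$, being a connected subgraph contained in $A$, lies entirely inside one component $A_0$. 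In either case $u$ still lies on the $A_0$-side.

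Next I would check that $A_0$ is itself a cut separating $u$ and $v$ with $c(A_0)\le c(A)$. The crucial observation is that there are no edges joining distinct components of the induced subgraph on $A$; hence every edge of $\delta A_0$ has its outside endpoint in $A^*$ rather than in $A\setminus A_0$, which gives $\delta A_0\subseteq\delta A$. In particular $\delta A_0$ is finite, so $A_0$ is a genuine cut, and $c(A_0)\le c(A)$. Moreover $A_0\ne\emptyset$ (it carries $u$) and $A_0\ne VX$ (since $v$ lies on the $A_0^*$-side, because $A^*\subseteq A_0^*$), so $A_0$ genuinely separates $u$ and $v$.

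Now minimality of $A$ gives $c(A_0)\ge c(A)$, hence $c(A_0)=c(A)$; since $\delta A_0\subseteq\delta A$ and every capacity satisfies $c(e)\ge 1$, equality of the two capacity sums forces $\delta A_0=\delta A$. Consequently no edge of $\delta A$ leaves $A\setminus A_0$, and as there are also no edges joining $A\setminus A_0$ to $A_0$, the set $A\setminus A_0$ has empty coboundary $\delta(A\setminus A_0)=\emptyset$. Because $X$ is connected and $A\setminus A_0\subseteq A\subsetneq VX$, this is possible only if $A\setminus A_0=\emptyset$, i.e.\ $A=A_0$ is connected. Applying the identical argument to $A^*$, with the roles of $u$ and $v$ interchanged, shows $A^*$ is connected as well, so $A$ is tight.

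The step I expect to be the main obstacle is the passage from $c(A_0)=c(A)$ to $\delta A_0=\delta A$ and thence to connectedness: this uses both that capacities are strictly positive integers, so a proper subset of $\delta A$ cannot match its capacity, and that $X$ is connected, so a nonempty proper subset of $VX$ cannot have empty coboundary. The end case also needs the small but essential remark that a tail of a ray, being connected, cannot be split across two components of $A$.
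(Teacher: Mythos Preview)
Your proof is correct and follows essentially the same approach as the paper: isolate a component of $A$ (respectively $A^*$) that still separates $u$ and $v$, observe that its coboundary sits inside $\delta A$, and invoke minimality of $c(A)$ to force the component to be all of $A$. You supply more detail than the paper does---in particular the careful treatment of the end case and the closing step via $\delta(A\setminus A_0)=\emptyset$ and connectedness of $X$---whereas the paper phrases the conclusion via the disjointness of the coboundaries of distinct components, but these are minor variants of the same argument.
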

\begin {proof}    Let $A$ be thin with respect to $u,v$.    It is easy to see that if $A$ separates $u,v$ then some component $C$ of $A$ or $A^*$ must separate $u,v$.      If $C$ is a component of $A$, then $\delta C \subset \delta A$ and if  $C, D$ are distinct components then $\delta C$ and $\delta D$ are disjoint.   Thus if $A$ is thin then $C =A$.   The result follows
\end {proof}
It is shown in \cite {[D2]} that there are only finitely many tight cuts $C$ with a fixed capacity  such that $\delta C$ contains a particular edge.
The proof of this in \cite {thomassen1993} is neater and it is reproduced here for completeness.   By replacing each edge with capacity $c(e)$, by $c(e)$ edges joining the same pair of vertices, we can assume that every edge has capacity one.
\begin {prop}\label {tight} For any $e \in EX$, there are only finitely many tight cuts $A$ with $|\delta A| = c(A) = k$ such that $e \in \delta A$.
\end {prop}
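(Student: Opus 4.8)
The plan is to prove the statement by induction on $k = |\delta A|$, carried out for an \emph{arbitrary} capacity-one graph $X$ and an \emph{arbitrary} distinguished edge simultaneously, so that the inductive hypothesis may be fed back into a modified graph. As the text has already reduced to the case $c(e)=1$ for all $e$, we have $c(A)=|\delta A|$ throughout, so only $|\delta A|$ matters. Two observations drive everything: (i) a tight cut $A$ is the component of $X-\delta A$ containing any prescribed vertex of $A$, so counting tight cuts amounts to counting their finite boundaries $\delta A$; and (ii) if $u\in A$ and $v\in A^*$, then every $u$--$v$ path contains an edge of $\delta A$. Since $X$ may be infinite and is not assumed locally finite, I cannot confine $A$ to a finite ball around $e$; the job of a single \emph{fixed} finite path will be to pin the cuts down instead.

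For the base case $k=1$, a tight cut with $\delta A=\{e\}$, $e=xy$, forces $A$ to be the component of $x$ and $A^*$ the component of $y$ in $X-e$; hence $e$ is a bridge and there are at most the two complementary cuts. For the inductive step, fix $e=xy$, assume the result for $k-1$ over all graphs, and let $A$ be tight with $e\in\delta A$ and $|\delta A|=k\ge 2$; by symmetry take $x\in A$, $y\in A^*$. Deleting $e$, the set $A$ is still tight in $G'=X-e$ (neither $A$ nor $A^*$ used $e$ internally, as $e$ crosses the cut), with $\delta_{G'}A=\delta A\setminus\{e\}$ of size $k-1$, and still $x\in A$, $y\in A^*$. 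If $e$ is a bridge of $X$, one checks this forces $\delta_{G'}A=\emptyset$, i.e. $k=1$, a contradiction; so no cut of size $\ge 2$ passes through a bridge and the claim is vacuous there. Otherwise $G'$ is connected, and here is the key move: fix once and for all a single path $Q$ from $x$ to $y$ in $G'$, with edges $q_1,\dots,q_M$. By observation (ii), some $q_j\in\delta_{G'}A$, so $A$ is a tight cut of $G'$ of size $k-1$ whose boundary contains $q_j$. The inductive hypothesis, applied to $G'$ and the edge $q_j$, gives only finitely many tight $(k-1)$-cuts of $G'$ through $q_j$; since every $A$ under consideration is such a cut for some $j$, and there are only $M$ values of $j$, there are finitely many $A$ in all. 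The orientation $y\in A$ is symmetric, closing the induction.

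The main obstacle, and the point I would treat most carefully, is precisely the failure of local finiteness: because $A$ and $A^*$ may both be infinite and vertices may have infinite degree, one cannot argue that $\delta A$ lies in a bounded neighbourhood of $e$, and a naive compactness argument has nothing to grip on. The resolution is that, although the cut itself is unbounded, it is forced to meet the fixed finite path $Q$, while deleting $e$ genuinely lowers the boundary size, so the induction on $k$ terminates. I would also be vigilant about the degenerate cases buried in the reduction: that $A$ stays connected (hence tight) in $X-e$, that $X-e$ can disconnect only when $e$ is a bridge, and that the bridge case contributes nothing for $k\ge 2$. These are exactly the places where one could otherwise lose control of the count.
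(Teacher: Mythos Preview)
Your proof is correct and follows essentially the same approach as the paper's: induct on $k$, pass to $X-e$, fix a single $x$--$y$ path there, and observe that every tight $k$-cut through $e$ becomes a tight $(k-1)$-cut in $X-e$ meeting that fixed path. Your write-up is in fact more careful than the paper's terse version, explicitly checking that $A$ remains tight in $X-e$ and that the bridge case is vacuous for $k\ge 2$.
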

\begin {proof}  The proof is by induction on $k$.  For $k=1$ there is nothing to prove.  So assume $k > 1$.    We can assume that $e = xy$ is in some tight $k$-cut,  i.e. a cut $A$ such that $\delta A$ has $k$ edges.  Hence $X - e$ has a path $P$ from $x$ to $y$.   Now every tight $k$ cut that contains $e$ also contains an edge of $P$.   By the induction hypothesis there are only finitely many tight $(k-1)$-cuts in $X-e$ containing an edge of $P$, and we are done.
\end {proof}

If $A, B$ are cuts, then the sets  $A\cap B, A^*\cap B, A^*\cap B, A^*\cap B, A\cap B^*$ are also cuts.  These sets are called the $\it corners $ of
$A, B$.   This term is suggested by Figure \ref {Cuts} .   Two corners are called opposite or adjacent as suggested in  this figure.
We say two cuts  $A, B$ are {\it nested} if  one $A\cap B, A^*\cap B, A^*\cap B, A^*\cap B, A\cap B^*$ is empty.
  A set $\ce $ of cuts is said to be nested if any two elements of $\ce $ are nested.
  
Two cuts which are not nested are said to {\it cross}.
  \begin{lemma} Let $B$ be a cut.      There are only finitely many tight cuts $A$ with capacity $n$ that cross $B$. 
\end {lemma}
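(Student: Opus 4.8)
The plan is to reduce the statement to Proposition~\ref{tight}, which already bounds the number of tight $n$-cuts through a fixed edge. As in that proposition we may assume every edge has capacity one, so that $c(A)=|\delta A|$ throughout. It will then suffice to produce a \emph{finite} set $\mathcal{E}\subseteq EX$, depending only on $B$ (and $n$), with $\delta A\cap\mathcal{E}\neq\emptyset$ for every tight $n$-cut $A$ crossing $B$: Proposition~\ref{tight} gives finitely many tight $n$-cuts through each edge of $\mathcal{E}$, and a finite union of finite sets is finite. The naive choice $\mathcal{E}=\delta B$ fails, since two cuts can cross while $\delta A\cap\delta B=\emptyset$ (an arc of a long cycle crossing another arc already shows this); the real work is to locate a boundary edge of $A$ in a set that remains finite even when $B$ is infinite.

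The key point is that tightness of $A$ lets me find two vertices of the fixed finite set $V(\delta B)$ of endpoints of the edges in $\delta B$ that lie on opposite sides of $A$. Since $A$ crosses $B$, all four corners are nonempty. The corners $A\cap B$ and $A\cap B^*$ both lie in $A$, which is connected; a path between them inside $A$ must traverse an edge $f_1$ with one endpoint in $B$ and one in $B^*$, i.e.\ $f_1\in\delta B$, and both endpoints of $f_1$ lie in $A$. Running the same argument in the connected set $A^*$, using the nonempty corners $A^*\cap B$ and $A^*\cap B^*$, produces an edge $f_2\in\delta B$ with both endpoints in $A^*$. Choosing an endpoint $p$ of $f_1$ and an endpoint $q$ of $f_2$ gives $p\in A$ and $q\in A^*$ with $p,q\in V(\delta B)$; in particular $A$ separates the pair $\{p,q\}\subseteq V(\delta B)$.

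To finish, I fix in advance, for each of the finitely many pairs $\{x,y\}\subseteq V(\delta B)$, a path $P_{x,y}$ in the connected graph $X$ joining $x$ to $y$, and set $\mathcal{E}=\bigcup_{\{x,y\}}E(P_{x,y})$, a finite edge set depending only on $B$. For any tight $n$-cut $A$ crossing $B$, the pair $\{p,q\}$ found above satisfies $p\in A$, $q\in A^*$, so the path $P_{p,q}$ crosses from $A$ to $A^*$ and hence contains an edge of $\delta A$; thus $\delta A\cap\mathcal{E}\neq\emptyset$, and Proposition~\ref{tight} completes the argument. The one genuinely delicate step is the middle one: because crossing does not force $\delta A$ and $\delta B$ to meet, I cannot work with $\delta B$ directly and must instead exploit the connectivity of \emph{both} $A$ and $A^*$ to pin endpoints of $\delta B$-edges on the two sides of $A$. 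Everything else is a routine use of the connectedness of $X$ together with Proposition~\ref{tight}.
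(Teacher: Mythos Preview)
Your proof is correct and follows essentially the same route as the paper's: both arguments use the tightness of $A$ to place vertices of $V(\delta B)$ on each side of $A$, then force $\delta A$ to meet a fixed finite edge set and invoke Proposition~\ref{tight}. The paper is terser---it takes for its finite set the edges of any finite connected subgraph $F$ containing $\delta B$ rather than your union of paths $P_{x,y}$, and it leaves implicit the connectivity argument you spell out in your middle paragraph---but the substance is the same.
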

\begin {proof}    Let $F$ be a finite connected  subgraph of $X$  that contains $\delta B$.   If 
$A$ crosses $B$, then both $A$ and $A^*$ contain a vertex of $\delta B$.   Hence $F$ contains an edge of $\delta A$.   The lemma follows from
Proposition \ref {tight}.

\end{proof}

We consider sets  $\ce $ satisfying the following conditions:-
\begin{itemize}
\item [(i)] If $A \in \ce$, then $A^* \in \ce$.

\item [(ii)]  The set $\ce $ is nested.
\item [(iii)] If $A, B \in \ce$ and $A\subset B$, then there are only finitely many $C \in \ce $ such that $A\subset C\subset B$.
\end {itemize}
The following result was first obtained explicitly in \cite {[D1]}.
\begin{theo} \label {tree} If $\ce $ is a set satisfying conditions (i) (ii) and  (iii) , then there is a tree  $T= T (\ce)$  such that the directed edge set  is $ \ce $.
\end{theo}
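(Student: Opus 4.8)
The plan is to construct $T(\ce)$ directly: its directed edge set will be $\ce$ with edge-reversal given by the involution $A\mapsto A^*$ from (i), and its vertices will be suitable \emph{orientations} of $\ce$. The starting observation is that nestedness (ii) says precisely that for any $A,B\in\ce$, after possibly replacing one or both by its complement we get an inclusion: one of $A\subseteq B$, $B\subseteq A$, $A\subseteq B^*$, $A^*\subseteq B$ holds, these four cases being the vanishing of the corners $A\cap B^*$, $A^*\cap B$, $A\cap B$, $A^*\cap B^*$ respectively. Thus $(\ce,\subseteq)$ is a partial order in which any two elements are comparable ``up to complementation,'' and this is the order I will use to encode the shape of the tree ($A\subsetneq B$ meaning that the directed edges $A$ and $B$ are co-oriented along a geodesic with $B$ upstream, in the sense of the relation $>$).

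Next I define the vertices. Call a subset $v\subseteq\ce$ a vertex if it contains exactly one of $A,A^*$ for every complementary pair and is a filter for $\subseteq$: whenever $A\in v$ and $A\subseteq B$ then $B\in v$. (One checks this is equivalent to the ``round'' condition that $A,B\in v\Rightarrow A\cap B\neq\emptyset$, the intuition being that all chosen cuts point at a common vertex.) A filter alone, however, also captures the \emph{ends} of the prospective tree: when $\ce$ is the edge set of a ray, orienting every cut towards the end gives a filter with no minimal elements. So I additionally require that $v$ contain no infinite strictly descending chain $A_1\supsetneq A_2\supsetneq\cdots$. Incidence is then defined so that $\{A,A^*\}$ joins the vertex $w$ in which $A$ is minimal to the vertex $u$ in which $A^*$ is minimal; equivalently, two vertices are adjacent exactly when they differ in one complementary pair. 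The role of (iii) is to guarantee, for each $A\in\ce$, that these two endpoint-orientations exist and are genuine (descent-free) vertices, so that the directed edge set of $T$ is all of $\ce$; constructing them — extending the forced choices $\{B:A\subseteq B\}$ and orienting the cuts lying strictly below $A$ away from the edge — is the first place real care is needed.

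To prove $T$ is a tree I establish connectivity and acyclicity. Given vertices $u\neq w$, set $D=\{A\in\ce: A\in w,\ A^*\in u\}$, the cuts separating them oriented towards $w$. First, $D$ is a chain: for $A,B\in D$, nestedness gives one of the four inclusions, but $A\subseteq B^*$ would force $B^*\in w$ (filter), contradicting $B\in w$, while $A^*\subseteq B$ (i.e. $B^*\subseteq A$) would, since $A^*\in u$, force $B\in u$, contradicting $B^*\in u$; hence $A,B$ are comparable. Second, $D$ is finite: an infinite descending subchain would violate properness of $w$, and an infinite ascending subchain would, on passing to complements, violate properness of $u$. Writing $D=\{B_1\subsetneq\cdots\subsetneq B_n\}$ and flipping $B_1,\dots,B_n$ one at a time yields a path of length $n$ from $u$ to $w$, each intermediate set again a vertex; this uses (iii) to see that consecutive flips give genuinely adjacent vertices. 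For acyclicity, a reduced closed path would cross some pair $\{A,A^*\}$ and later return, forcing both $A$ and $A^*$ into the separating set of two of its vertices, impossible since that set is a chain of distinct elements; equivalently the chain structure of $D$ already exhibits the $u$–$w$ path as unique. Hence $T$ is connected and circuit-free, and by construction its directed edge set is $\ce$.

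The main obstacle is the middle step: pinning the vertex set down so that it is neither too large (the ends must be excluded, which forces the no-infinite-descending-chain condition, and shows why (iii) alone does not suffice) nor too small (every $A\in\ce$ must occur as an edge with two genuine endpoints). Proving that each $A$ has well-defined proper endpoints, and that flipping along the finite chain $D$ stays within the vertex set, is where (ii) and (iii) must be combined most delicately; once this is in place, the chain argument for connectivity and the nestedness argument for acyclicity are routine.
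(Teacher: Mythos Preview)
Your approach is essentially the same as the paper's: vertices are defined as consistent orientations of $\ce$ (the paper phrases them as maps $\alpha:\ce\to\Z_2$ satisfying your filter conditions (a),(b), and takes $VT=\{\iota A:A\in\ce\}$ where $\iota A$ is exactly the orientation you build by extending $\{B:A\subseteq B\}$), and uniqueness of geodesics follows because the cuts separating two vertices form a finite chain by (ii) and (iii). Your ``no infinite descending chain'' condition is equivalent to requiring a minimal element, which is equivalent to being of the form $\iota A$, so your vertex set coincides with the paper's; your write-up is simply more explicit about points the paper leaves to the reader.
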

\begin {proof}  Consider  the set  of maps $\alpha  : \ce \rightarrow \Z _2$ satisfying the following
\begin{itemize}
\item [(a)]  If $\alpha (A) = 1$, then $\alpha (A^*) = 0.$  If $\alpha (A) = 0$, then $\alpha (A^*) = 1$.
\item [(b)]  If  $\alpha (A) =1$ and $A\subset B$, then $\alpha  (B) =1$.

\end{itemize}

The vertex set of $T$ will be a subset of the set of all maps satisfying (a) and (b).

Put $ET = \ce $ and for  $A \in \ce$, put $\iota A =  \alpha  $ where $\alpha  (B) = 1 $ if $A\subseteq B$ or if  $A^* \subset B$.   Put $\tau A = \iota A^*$
Thus $\tau A = \beta $ where $\beta (B) = 1$ if $A^* \subseteq B$ or if $A\subset B$.
Then $\iota A $ and $\tau A$ take the same value on every $B$ except if $B = A$ or $B =A^*$.
We define $VT$ to be the set of all functions which are $\iota  A$ for some $A \in \ce $.      
It is fairly easy to check that $\iota A$ satisfies conditions  (a) and (b).    If $u =\iota A$ and $v = \tau B$, then the directed edges in a path joining
$u$ and $v$ consist of the set $\{ C \in \ce | A \subseteq C \subseteq B\}$.  Using (iii) this set is totally ordered by inclusion and if is finite by (ii)  so is the unique geodesic joining $u$ and $v$.  Thus $T$ is a tree.
\end {proof}
If $\ce $ is a nested set of cuts in a graph $X$ and there is a bound on the capacity of cuts in $\ce $, then
the set of all maps $\alpha : \ce \rightarrow \Z _2$ satisfying (a) and (b) can be identified with $VT \cup \Omega T$. 
Thus a ray in $T$ corresponds to a strictly decreasing sequence $E_1 \supset E_2 \cup \dots $ of cuts in $\ce $ and if we put $\alpha (E) =  1$ if for $i$  sufficiently large $E \supset E_i$.    It follows from Proposition \ref {tight} that the intersection of all the $E_i$'s is the empty set and that for any cut
$A$ for $i$ sufficiently large either $E_i \subset A$ or $E_i \subset A^*$.   If $\alpha : \ce \rightarrow \Z_2$ satisfies (a) and (b)  and there is a unique minimal  $B \in \ce $ for which $\alpha (B) =1$, then $\alpha = \iota B$.   If there is no such $B$ then we can find a  a strictly decreasing sequence $E_1 \supset E_2 \cup \dots $ of cuts in $\ce $ such that $\alpha E_i = 1$ for every $i$.  Thus $\alpha $ corresponds to a ray in $T$.

We can identify  a vertex $v$  of $X$ with a map $v : \B X \rightarrow  \Z _2$.  Thus $v(A) = 1$ if $v \in A$ and $v(A) = 0$ if $v \notin A$.
  Restricting to $\ce $ will give a vertex of $T$.  Thus there is a map $\nu :  VX \rightarrow VT$ such that $\nu (\alpha ) , \nu (\beta )$ differ only on the cuts separating $\alpha $ and $\beta $.
  
  Note that there may be vertices of $T$ which are not in the image
of $\nu $.   
Each directed edge $e$  of $X$ will give a finite directed path in $T$ consisting of those cuts $A \in \ce $ for which $\iota e \in A$ and $e \in \delta A$.
A ray in $X$ will correspond to a path in $T$ by concatenating the paths for each edge.   It may be the case that this path may back track.    It will
determine a ray in $X$ unless it visits a particular vertex of $T$ infinitely many times.    Note that because of Proposition \ref {tight} it cannot visit
two distinct vertices infinitely many times.   Thus we can extend $\nu $ so that it is a map $\nu : VX \cap \Omega X \rightarrow VT\cap  \Omega T$.

If $\ce \subset \B X$ satisfies the above conditions, then there is a tree $T(\ce )$.   If $G$ is the automorphism group of $X$ and $\ce $ is a $G$-set, then $T$ which is a $G$-tree,  is called
a {\it structure tree} for $X$.    If $T = T(\ce )$ is a structure tree for $X$,  then the  map $\nu : VX \rightarrow VT$ defined above is a $G$-map.

We now show that a structure tree determines a decomposition of the graph $X$,  in the same way that a group $G$ acting on a tree determines a decomposition of the group $G$.
 
 Let $T= T(\ce )$ be a structure tree for $X$ corresponding to a nested set of tight cuts $\ce $ satisfying the finite interval condition (iii).
 Let $v \in VT$.

Let $\nu : VX \rightarrow VT$ be the map defined earlier.
We define a graph $X _v$ as follows.   We take $EX _v$ to be the edges which lie in  $\delta C$ for some edge $C$ of $T$ incident with $v$, together with those edges $e$ such that $\nu $ maps both vertices of $e$ to $v$.
We take $VX_v$ to be the set of vertices of these edges, but we identify vertices $x,y$ if  they both lie in $C^*$ when $C$ has initial vertex $v$.

Each vertex $x$ of $X$ for which $\nu x = v$ is a vertex of $VX _v$.  Such a vertex is called a $\nu$-vertex, but there may be no such vertices.    There is another vertex of $X_v$
for each cut $C$ with initial vertex $v$ and this vertex is obtained by identifying all the vertices of $\delta C$ that are in $C^*$.  Such a vertex is called a $\rho $-vertex.   A $\rho $-vertex has degree $|\delta C|$.

It is fairly easy to see that $X_v$ is connected.   Thus any two vertices of $X$ are joined by a path in $X$.    If $x,y$ are two vertices of $X$ that 
become vertices of $X_v$ after carrying out the identifications just described, then the path in $X$ will become a path $p$ in $X_v$ if we delete any
edges that are not in $X_v$.   Here we use the fact that $C^*$ is connected, and when $p$ enters $C^*$ at vertex $w$  it must leave $C^*$ at a vertex $w'$ that is identified with $w$ in $X_v$.

In a similar way a ray in $X_v$ corresponds to a ray in $X$.      If the ray passes through a vertex corresponding to the cut $C$, then the two incident
edges will both lie in $\delta C$.  There will be a path in $C^*$ joining the corresponding vertices before they are identified.    For each such
vertex that is visited by the ray we can add in this path to obtain a ray in $X$.    This ray will belong to an end $\omega \in \Omega X$ such that
$\nu \omega  = v$.    

We regard the graphs $X_v$ for each $v \in VT$ as the factors in the decomposition for $X$.   We now describe how they fit together to give $X$.
For each edge $e \in EX$ there are only finitely many $E \in \ce $ such that $e \in \delta E$.     These edges form the edges of the geodesic in
$T$ joining $\nu u$ and $\nu v$ where $u,v$ are the vertices of $e$.  Suppose there are $k(e)$ such edges.   Now form a new graph $X'$ in which
each edge $e$ is subdivided into $k(e)$ edges.     We can extend $\nu : VX \rightarrow VT$ to a map also denoted $\nu : VX' \rightarrow VT$ which
can be extended to a graph morphism. It will now be the case that $\nu $ is surjective.
For each cut $A \in \ce$, there is a cut $A' \in \B X'$ in which $\delta A'$ consists of those edges of $X'$ that are mapped to the edge $A$ of $ET$ under the extended morphism  $\nu : X'\rightarrow T$.  We have that $A' \cap VX = A$ and $|\delta A' |= |\delta A|$.

Clearly we have a nested set of cuts $\ce '$ that will be the edge set of a structure tree $T$ which is isomorphic to $T$, and can be identified with
$T$ in a natural way.   For $v \in VT$ the graph $X'_v$ will be a subdivision of the graph $X_v$.   Each edge of $X_v$ that joins two $\rho $-vertices
is subdivided into two edges in which the centre vertex is a $\nu $-vertex in $X'$.

It is easier to use $X'$ rather than $X$ to understand the structure of the graphs $X_v$.   This is because every edge of $X'$ lies in at most one
$\delta A$ for $A \in \ce '$ and if $u,v$ are the vertices of $A \in ET = ET'$, then $X_u$ and $X_v$ are the only factors in the decomposition of $X$
that contain the edge $e$.    If $G$ is a group acting  on $X$ then it will also act on $T$.    For $v \in VT$ the stabiliser $G_v$ will act on $X_v$.
Two directed edges of $X'_v$ will lie in the  same $G$-orbit if and only if they lie in the same $G_v$-orbit.      If $u,v$ are distinct vertices of $T$, then
edge sets  of $X'_v$ and $X'_u$ are disjoint unless $u.v$ are adjacent in $T$ in which case the intersection consists of the edges that map to the edge with vertices $\nu u$ and $\nu v$ in $T$.   It follows that 
 if $X$ has finitely many $G$-orbits, then
each $X_v$ has finitely many $G_v$-orbits.

We have a decomposition of $X$ in which the factors are $X_v$.    It is possible to recover the graph $X$ from its factors.   It is easy to see how
to get the graph $X'$ and then one removes any vertices of degree two, to get $X$ (or $X$ with vertices of degree two removed).
\begin {lemma}\label {thincorner}  If $A, B$ are crossing thin cuts, with $c(A) = m, \ c(B) =n$, then  after relabelling $A$ as $A^*$ and $B$ as $B^*$  if necessary, both  $A\cap B^*, A^*\cap B$ are thin cuts with capacities  $m, n$ respectively . 
\end{lemma}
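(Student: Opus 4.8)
The plan is to run the standard uncrossing argument, powered by the submodularity of the capacity function $c$ on $\B X$. Classifying each edge of $X$ according to which of the four corners $A\cap B,\ A^*\cap B,\ A\cap B^*,\ A^*\cap B^*$ contains its two endpoints, a direct count of the contributions to the various $\delta(\cdot)$ yields the two identities
\[
c(A)+c(B)=c(A\cap B^*)+c(A^*\cap B)+2\kappa,\qquad c(A)+c(B)=c(A\cap B)+c(A^*\cap B^*)+2\lambda,
\]
where $\kappa\ge 0$ is the capacity of the edges joining $A\cap B$ to $A^*\cap B^*$ and $\lambda\ge 0$ that of the edges joining $A\cap B^*$ to $A^*\cap B$. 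In particular $c(A\cap B^*)+c(A^*\cap B)\le c(A)+c(B)=m+n$, and the same bound holds for the opposite diagonal. Crossing guarantees that all four corners are nonempty, so we are genuinely in this configuration.

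Next I would record what thinness buys. Since $A$ is thin with respect to some pair $s,t$, orient $A$ so that $s\in A,\ t\in A^*$; then the corner containing $s$ and the corner containing $t$ each separate $s,t$ (a corner separates $s,t$ exactly when it contains one of them), so each has capacity at least $m$. Likewise, orienting $B$ so that $u\in B,\ v\in B^*$ for a witnessing pair $u,v$ of $B$, the two corners meeting $\{u,v\}$ each have capacity at least $n$.

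The heart of the proof is then a choice of orientations. In the generic situation one can orient $A$ and $B$ (and, if necessary, relabel $A,A^*$ and $B,B^*$ to fix the order) so that $A\cap B^*$ separates $s,t$ and $A^*\cap B$ separates $u,v$; minimality gives $c(A\cap B^*)\ge m$ and $c(A^*\cap B)\ge n$, and the first identity $c(A\cap B^*)+c(A^*\cap B)=m+n-2\kappa$ then forces equality throughout, so $c(A\cap B^*)=m$, $c(A^*\cap B)=n$ and $\kappa=0$. As these corners realise the minimal separating capacities for $s,t$ and for $u,v$, they are thin. The only configuration resisting such a choice is when $\{s,t\}$ and $\{u,v\}$ sit on complementary diagonals of the four corners; there $A$ also separates $u,v$ and $B$ also separates $s,t$, so $m=c(A)\ge n$ and $n=c(B)\ge m$, giving $m=n$. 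In that case both $A\cap B^*$ and $A^*\cap B$ separate $u,v$, so each has capacity at least $n=m$, while their sum is at most $m+n=2m$; hence both equal $m=n$, and again both corners are thin with the required capacities.

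The routine part is the edge count behind the two identities. The main obstacle is the orientation bookkeeping: verifying that $A$ and $B$ can always be oriented (up to the permitted relabelling) so that the two opposite corners separate the two witness pairs, and isolating the single degenerate ``complementary diagonals'' configuration and checking the conclusion survives there. All the real leverage comes from invoking minimality of $m$ and $n$ at exactly the moment the submodular identity is applied.
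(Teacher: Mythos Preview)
Your proof is correct and follows the same uncrossing strategy as the paper: both exploit the submodular relation between the capacities of opposite corners and the thinness lower bounds coming from the witness pairs. The paper executes this via an explicit case analysis on the six edge-capacity parameters $a,b,c,d,e,f$ of Figure~\ref{Cuts} (with the final subcase $a=b=c$, $e=f=0$ corresponding exactly to your ``complementary diagonals'' configuration), whereas you package the same content into the identity $c(A\cap B^*)+c(A^*\cap B)=m+n-2\kappa$ and a clean two-case split, reaching the conclusion more directly.
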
    
\begin {proof}   We refer to Fig 3.   Suppose $m\leq n$.  Suppose $A$ is thin with respect to $x, y$ with $x \in A$  and $B$ is thin with respect to $x', y'$ with $x' \in B$.
After possible relabelling we can assume  $a\leq b, \ c\leq d$.   If $a < c$ then $c(A\cap B) < n$ and $c(A^*\cap B) < n$ and so 
$B$ is not thin since one of these two corners separates $x'$ and $ y'$.   If $c <a $, then $A$ is not thin.  Hence  $a = c$.  If $a < b$, then $c(A\cap B) = 2a + f  < a +e + f + b = m$, and so $x \in A\cap B^*$ and $c(A\cap B^*)= a + e + c  = m$   and  $f = 0$ .   Also  $c(A^*\cap B) = a + e + d = n$,   and $x' \in A^*\cap B $  and so it is thin, and we are done.    
If $a = b$, then $m  = 2a + e  + f  \leq a +e + f + d = n$ and so $b\leq d$.  Thus  $a= b=c \leq  d$.  If $e$ is not $0$, then $c(A\cap B) < m, c(A^*\cap B^*) < n$ and the lemma follows easily.  If $e =0$ and $f \not= 0$, then $c(A\cap B^*) < m , c(A^*\cap B) <n$ and the lemma follows if we relabel
$A$ as $A^*$.   
But if $e = f= 0$, then $c(A\cap B) = c(A\cap B^*) = m$ and $c(A^*\cap B) = c(A^*\cap B^*) = n$.      In this situation it is not possible that two adjacent 
corners of $A, B$ are not thin.
   Thus for one pair of opposite corners we have that both corners are thin. By relabelling we can assume these
corners are $A\cap B^*$ and $A^*\cap B$  and the lemma is proved.   

\end {proof}

\begin {lemma}\label {corner}   Let  $A, B, C$  be cuts .    
\begin {itemize}
\item [(i)] Let $A, B$ be not nested  and let $C$ be nested with both $A$ and $B$, then $C$ is nested with every
corner of $A, B$.  
\item [(ii)]If $C$ is nested with $A$, then $C$ is nested with two adjacent corners of $A$ and $B$.
\end {itemize}
\end {lemma}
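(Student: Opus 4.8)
The plan is to reduce everything to the \emph{comparability criterion} for nestedness: two cuts $P,Q$ are nested if and only if one of the four containments $P\subseteq Q$, $Q\subseteq P$, $P\subseteq Q^*$, $Q^*\subseteq P$ holds, since each simply restates that one corner of $P,Q$ is empty. First I would record the two symmetries that both hypotheses and both conclusions respect: replacing $A$ by $A^*$, or $B$ by $B^*$, merely permutes the four corners $A\cap B,\ A^*\cap B,\ A\cap B^*,\ A^*\cap B^*$ among themselves, preserves adjacency and oppositeness, and preserves the properties ``$A,B$ cross'' and ``$C$ is nested with $A$ (resp. $B$)''. Hence in each part I may relabel $A,A^*$ and $B,B^*$ freely.

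For part (ii), since $C$ is nested with $A$, after relabeling $A\leftrightarrow A^*$ I may assume $C$ is comparable with $A$, so either $C\subseteq A$ or $A\subseteq C$. If $C\subseteq A$ then $C\cap A^*=\emptyset$, so $C$ is disjoint from each of $A^*\cap B$ and $A^*\cap B^*$ and is therefore nested with both; these two corners share the half $A^*$, hence are adjacent. If instead $A\subseteq C$, then $A\cap B$ and $A\cap B^*$ are both contained in $C$, so $C^*$ is disjoint from each, giving nestedness with the adjacent pair sharing the half $A$. This argument uses nothing about $B$, so (ii) holds for an arbitrary second cut.

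For part (i) I would use that $A,B$ cross, so all four corners are nonempty. Relabel $A,A^*$ and $B,B^*$ so that the nestedness of $C$ with $A$ reads as comparability of $C$ with $A$, and likewise with $B$; this leaves four combined cases. The two \emph{mixed} cases are impossible: $C\subseteq A$ together with $B\subseteq C$ forces $B\subseteq A$, i.e. $A^*\cap B=\emptyset$, while $A\subseteq C$ together with $C\subseteq B$ forces $A\subseteq B$, i.e. $A\cap B^*=\emptyset$, each contradicting that $A,B$ cross. The two surviving cases are $C\subseteq A\cap B$ and $A\cup B\subseteq C$ (equivalently $C^*\subseteq A^*\cap B^*$). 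In the first, each corner is handled by a one-line containment: $C\subseteq A\cap B$ gives nestedness with $A\cap B$ directly, $C\subseteq A$ kills the intersection of $C$ with the two corners inside $A^*$, and $C\subseteq B$ kills its intersection with $A\cap B^*$; the second case is dual, with $C^*$ playing the role of $C$.

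The main obstacle is the fourth, \emph{opposite} corner. Applying part (ii) to $A$ and to $B$ separately only yields nestedness with two corners forming one row of the $2\times 2$ array of corners and two forming one column, i.e. three of the four corners, leaving precisely the diagonally opposite corner uncontrolled. The entire force of the hypothesis that $A$ and $B$ cross is spent in excluding the two mixed comparabilities above, which is exactly what forces $C$ (or $C^*$) to lie inside a single corner and hence to be nested with the opposite corner as well.
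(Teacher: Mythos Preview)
Your proof is correct and follows essentially the same approach as the paper: reduce via the relabelling symmetries to a short case analysis, eliminate the ``mixed'' cases because they force $A,B$ to be nested, and in the surviving case observe that $C$ (or $C^*$) lies inside a single corner and is therefore disjoint from the other three. The only cosmetic difference is that the paper also allows the relabelling $C\leftrightarrow C^*$, which collapses your two surviving cases in part (i) (namely $C\subseteq A\cap B$ and $C^*\subseteq A^*\cap B^*$) into one, and similarly halves the work in part (ii); but this is purely a matter of bookkeeping.
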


\begin{proof} 
For (i) by possibly  relabelling   $A$ as $A^*$ and/or $B$ as $B^*$ and/or $C$ as $C^*$  we can assume either
\begin{itemize}
\item [(a)]  $C \subset A$ and $C\subset B$
or 
\item[(b)] $C\subset A$ and $C^*\subset B$.    
\end{itemize}
If (a) then $C\subset A\cap B$ and $C$ is contained in the complement of each of the other corners.
If (b), then   $B^*\subset C\subset A$, and so $A, B$ are nested, which contradicts our hypothesis.

For (ii) if $A \subset C$, then $A\cap B \subset C$ and $A\cap B^* \subset C$.
\end{proof}

Let $\C $ be a set of  cuts 
Let $A$ be a cut and let $M(A, \C)$ be the set of  cuts in $\C $  which are not  nested with $A$. Set $\mu  (A,\C) = |M(A, \C)|$.

\begin{lemma}\label{corners_equality}
Let $\C $ be a nested set of  tight cuts.   Let $A$ be a tight cut which is not nested with some $B\in \C$.
Let $\mu (A) = \mu (A, \C)$ be the number of cuts in $\C$ that are not nested with $A$. then

\[\mu(A\cap B, \C ) + \mu (A\cap B^*, \C)  <  \mu (A,\C) .\  \ \ \  \]
\end{lemma}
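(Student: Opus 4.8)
The plan is to realise the three quantities as cardinalities of nested sets and compare them directly. Write $M=M(A,\C)$, $M_1=M(A\cap B,\C)$, and $M_2=M(A\cap B^*,\C)$. I will show that $M_1$ and $M_2$ are \emph{disjoint} subsets of $M$ and that $B$ itself lies in $M$ but in neither $M_1$ nor $M_2$; since these sets are finite, the strict inequality follows by counting. Throughout I use the standing fact that every $C\in\C$ is nested with $B$, because $B\in\C$ and $\C$ is nested.

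The first step is the inclusions $M_1\subseteq M$ and $M_2\subseteq M$. Take $C\in\C$ not nested with the corner $A\cap B$. If $C$ were nested with $A$, then $C$ would be nested with both $A$ and $B$, and since $A,B$ are not nested, Lemma~\ref{corner}(i) would force $C$ to be nested with every corner of $A,B$, in particular with $A\cap B$ --- a contradiction. Hence $C$ is not nested with $A$, i.e. $C\in M$; the identical argument gives $M_2\subseteq M$. The key step, and the one I expect to carry the real content, is the disjointness $M_1\cap M_2=\emptyset$. For this I run a case analysis on how a given $C\in\C$ sits relative to $B$: being nested with $B$, exactly one of $C\subseteq B$, $C\subseteq B^*$, $B\subseteq C$, $B^*\subseteq C$ holds. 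If $C\subseteq B$ then $C\cap(A\cap B^*)=\emptyset$, so $C$ is nested with $A\cap B^*$ and $C\notin M_2$; if $C\subseteq B^*$ then $C\cap(A\cap B)=\emptyset$, so $C\notin M_1$; if $B\subseteq C$ then $C^*\cap(A\cap B)=\emptyset$, so $C\notin M_1$; and if $B^*\subseteq C$ then $C^*\cap(A\cap B^*)=\emptyset$, so $C\notin M_2$. In every case $C$ is automatically nested with one of the two corners, so it cannot lie in both $M_1$ and $M_2$. The main care here is simply in bookkeeping the corners correctly; once the relation to $B$ is fixed, each sub-case is immediate.

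Finally I extract the strict gap from $B$. By hypothesis $B$ is not nested with $A$, so $B\in M$. On the other hand $A\cap B\subseteq B$ gives $B^*\cap(A\cap B)=\emptyset$, so $B$ is nested with $A\cap B$ and $B\notin M_1$; likewise $A\cap B^*\subseteq B^*$ gives $B\cap(A\cap B^*)=\emptyset$, so $B\notin M_2$. Thus $M_1\cup M_2$ is a disjoint union contained in $M\setminus\{B\}$, and therefore
\[
\mu(A\cap B,\C)+\mu(A\cap B^*,\C)
=|M_1\cup M_2|
\le \mu(A,\C)-1<\mu(A,\C).
\]
The one point needing justification outside the combinatorics is that $M=M(A,\C)$ is finite, so that deleting $B$ strictly lowers the count; this is guaranteed by the earlier lemma bounding the number of tight cuts of given capacity that cross a fixed cut, applied over the capacities occurring in $\C$.
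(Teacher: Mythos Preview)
Your proof is correct and follows essentially the same line as the paper's: show that any $C\in\C$ nested with $A$ is nested with both corners (via Lemma~\ref{corner}), that any $C$ is nested with at least one of the two corners because it is nested with $B$, and that $B$ itself contributes to $\mu(A,\C)$ but to neither corner count. Your version is simply more explicit, spelling out all four nesting cases with $B$ and writing the counting as $M_1\sqcup M_2\subseteq M\setminus\{B\}$; the paper collapses this to one representative case and the remark that ``$B$ is counted on the right but not on the left.'' Your closing caveat about finiteness of $M(A,\C)$ is a fair point---the paper leaves it implicit---but note that the earlier lemma only controls crossings at a fixed capacity, so the argument you sketch needs $\C$ to involve boundedly many capacities, which is indeed how the lemma is applied throughout.
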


\begin{proof}
  If $C \in \C$ is nested with $A$, then it is nested with both $A$ and $B$ and so it is nested with $A\cap B$ and $A\cap B^*$ by Lemma \ref{corner}.
If $C$ is not nested with $A$, then it  must be nested with one of $A\cap B$ and $A\cap B^*$.   For if, say, $C \subset B$ then $B^* \subset C^*$ and so $A\cap B^* \subset C^*$.
Thus $C$ is not nested with at most one of  $A\cap B$ and $A\cap B^*$  and the lemma follows, since $B$ is counted on the right but not on the left.
\end{proof}

Let $\C _n$ be the  set of thin cuts with capacity $n$.

\begin {theo} \label {cE} There is a uniquely defined nested set of thin cuts $\ce $ in which $\ce _n = \{ E \in \ce | c(E) \leq n\}$ constructed  inductively  as follows:-

$\ce _1 = \C _1$.

If     $\D _n =  \{  A \in \C _n | \mu (A, \ce _{n-1}) = 0\}$,
then $\ce _n  = \ce _{n-1}\cup \D _n' $, where $\D _n'$ consists of all those cuts  $D \in \D_n$ satisfying 
\begin{itemize}
\item [(*)] $D$ is thin with respect to some
$u,v \in  VX \cup \Omega X$  and $\mu (D, \D _n)$ takes the minimal value among all $D \in \D _n$ that are thin with respect to $u,v$.
\end {itemize}
\end {theo}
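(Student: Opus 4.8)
The plan is to argue by induction on $n$ that $\ce_n$ is a well-defined, uniquely determined, nested set of thin cuts closed under complementation, consisting of exactly the cuts of capacity $\le n$ produced by the construction. The base case $\ce_1 = \C_1$ needs only the remark that two tight cuts of capacity $1$ are automatically nested: if $\delta A = \{e_1\}$ and $\delta B = \{e_2\}$ with $A, A^*, B, B^*$ all connected, then either $e_2 \in \delta A$ (so $B \in \{A, A^*\}$) or $e_2$ has both endpoints on one side of $A$, in which case the connected set on the other side lies wholly inside $B$ or $B^*$; either way $A$ and $B$ are nested.

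For the inductive step I would first check that the minimality demanded in $(*)$ is meaningful. By Proposition~\ref{tight} together with the fact that thin cuts are tight, the number $\mu(D, \D_n)$ of cuts in $\D_n$ crossing a given $D$ is finite, so the set $\{\mu(D,\D_n) : D \in \D_n \text{ thin w.r.t. } u,v\}$ of non-negative integers attains its minimum. Since both thinness with respect to $u,v$ and the value of $\mu(\cdot,\D_n)$ are invariant under $A \mapsto A^*$, the selected set $\D_n'$ is closed under complementation, so $\ce_n = \ce_{n-1} \cup \D_n'$ inherits condition (i).

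The heart of the argument is to show that no $D \in \D_n'$ crosses any $D' \in \D_n$; this yields at once that $\D_n'$ is nested and, with the inductive hypothesis and the definition of $\D_n$, that $\ce_n$ is nested. Suppose $D$ is thin with respect to $u,v$ and crosses $D'$, both of capacity $n$. Because $\mu(D,\ce_{n-1}) = \mu(D',\ce_{n-1}) = 0$, each cut of $\ce_{n-1}$ is nested with both $D$ and $D'$, so by Lemma~\ref{corner}(i) every corner of $D, D'$ is still nested with all of $\ce_{n-1}$. Applying Lemma~\ref{thincorner} (relabelling within the $*$-closed sets $\D_n', \D_n$, which is permissible) I get that the corner $D \cap D'^*$ is a thin cut of capacity $n$ separating $u,v$ with $u \in D \cap D'^*$; having minimal capacity among cuts separating $u,v$ it is thin with respect to $u,v$, and being nested with $\ce_{n-1}$ it lies in $\D_n$. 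Finally Lemma~\ref{corners_equality} with $\C = \D_n$ gives $\mu(D \cap D', \D_n) + \mu(D \cap D'^*, \D_n) < \mu(D, \D_n)$, hence $\mu(D \cap D'^*, \D_n) < \mu(D, \D_n)$. As $D \cap D'^*$ is a competitor in $\D_n$ thin with respect to the same pair $u,v$, this contradicts the minimality of $\mu(D,\D_n)$ in $(*)$. Thus $\D_n'$ is nested, and since $\D_n' \subseteq \D_n$ is nested with $\ce_{n-1}$ by the definition of $\D_n$, the whole set $\ce_n$ is nested.

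To finish I would set $\ce = \bigcup_n \ce_n$; nestedness is a pairwise property and any two cuts lie in a common $\ce_n$, so $\ce$ is nested, and since a capacity-$n$ cut can enter only at stage $n$ we indeed have $\ce_n = \{E \in \ce : c(E) \le n\}$. Uniqueness is then immediate, since each stage is determined by the previous one and by the canonical data (the thin cuts, their capacities, and the function $\mu$) with no choices made; the same canonicity shows $\ce$ is invariant under every automorphism of the network. The step I expect to be the main obstacle is precisely the matching of Lemma~\ref{thincorner} with Lemma~\ref{corners_equality}: one must ensure that the corner produced as thin by the former is exactly one of the two corners $D \cap D', D \cap D'^*$ carrying the strict decrease in the latter, that it is thin for the same pair $u,v$ witnessing $D \in \D_n'$, and that it really lies in $\D_n$. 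The bookkeeping of which of $D, D'$ gets relabelled is where the care is needed.
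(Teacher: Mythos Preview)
Your overall strategy is sound and the base case and the closure/uniqueness remarks are fine, but the core step contains a genuine gap: you invoke Lemma~\ref{corners_equality} with $\C=\D_n$, and that lemma requires $\C$ to be \emph{nested}. The set $\D_n$ is not known to be nested --- showing that a suitable subset is nested is precisely the goal --- so the hypothesis fails. And the nestedness is used essentially in the proof of the lemma: when $C\in\C$ is nested with $A$, one needs $C$ nested with $B$ (because both lie in the nested $\C$) in order to apply Lemma~\ref{corner}(i) and conclude that $C$ is nested with every corner. Without this, a $C\in\D_n$ with $C\subset D$ may well cross $D\cap D'$ (and even both $D\cap D'$ and $D\cap D'^{*}$), so the adjacent--corner inequality $\mu(D\cap D')+\mu(D\cap D'^{*})<\mu(D)$ can fail. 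This also explains why your attempted stronger statement, that $D\in\D_n'$ crosses no $D'\in\D_n$, cannot be obtained by this route; only nestedness of $\D_n'$ itself is needed, and the paper proves no more than that.

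The paper's argument differs in exactly the way needed to avoid this problem. It takes \emph{two} cuts $A,B\in\D_n'$, with witnessing pairs $(x,y)$ and $(x',y')$, and works with a pair of \emph{opposite} corners rather than the adjacent pair. The key inequality is
\[
\mu(A\cap B,\D_n)+\mu(A^{*}\cap B^{*},\D_n)<\mu(A,\D_n)+\mu(B,\D_n),
\]
which follows directly from Lemma~\ref{corner} (not Lemma~\ref{corners_equality}) and needs no nestedness assumption on $\D_n$: if $C$ crosses both opposite corners then by Lemma~\ref{corner}(ii) it must cross both $A$ and $B$, and if it crosses one corner then by Lemma~\ref{corner}(i) it crosses at least one of $A,B$; strictness comes from $A$ and $B$ themselves. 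One then case-splits on where $x,y,x',y'$ sit among the four corners, and in each case a suitable opposite pair of corners lies in $\D_n$, separates the relevant witnessing pairs, and has strictly smaller total $\mu$, contradicting the optimality of \emph{both} $A$ and $B$. The issue you flagged at the end --- making Lemma~\ref{thincorner} produce a corner thin for the \emph{same} pair $u,v$ --- is handled in the paper by this case analysis rather than by a single relabelling.
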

A cut in $\D_n$ satisfying (*) is said to be {\it optimally nested with respect to $u,v$}.

\begin {proof}  This is an argument from \cite {[DW]}.  Put $\mu (A) = \mu (A, \D_n)$.   Let $A, B \in \D '$ be not nested. Each corner of $A, B$ is nested with every $e \in \ce _{n-1}$  by Lemma \ref {corner}.  Suppose $A$ is optimally nested  with respect
to  $x, y$ and $B$ is optimally nested with respect to $x', y'$.   Here $x, y, x',y'$ are elements of $VX \cup \Omega X$.   Each of $x, y, x', y'$ determines a corner   of
$A, B$.   There are two possibilities.
\begin {itemize}
\item [(i)]  The sets $x, y$ determine opposite corners, and $x', y'$ determine the other two corners.
\item [(ii)]  There is a pair  of  opposite corners such that one corner is determined by one of $x, y$ and the opposite corner
is determined by one of $x', y'$.  
\end {itemize}
 
In case (i) $A$ and $B$ separate both pairs $x, y$ and $x',y'$.   Since $A, B$ are optimally nested with respect to
$x, y$ and $x',y'$,  we have  $\mu (A) = \mu(B)$.   But now  both $A\cap B$ and $A^*\cap B^*$ separate $x,y$ say  and  
$c(A\cap B)  = c(A^*\cap B^*) =n$ by Lemma \ref {thincorner}   so that both the corners are in $\D _n$,  and
$\mu (A\cap B) +\mu (A^*\cap B^*) < \mu (A)+\mu (B) =2\mu (A)$, by Lemma \ref {corner},  since if an element of $\C _n$ is not nested with both $A\cap B$ and $A^*\cap B^*$ it is not nested with both
$A$ and $B$ and if it is not nested with one of $A\cap B, A^*\cap B^*$ then it is not nested with one of $A$ and $B$.   The strict equality follows because 
$A \in \D _n$ separates $x,y$ and is not nested with $B$ but both $A\cap B, A^*\cap B^*$ are nested  with $A$ and $B$.
Since both $A\cap B, A^*\cap B^*$ separate $x$ and $y$ we have a contradiction.

In case (ii) suppose these corners are $A\cap B$ and $A^*\cap B^*$, and that $x \in A\cap B,  y' \in  A^*\cap B^*$.    But then $A\cap B$ separates $x$ and $y$ and $A^*\cap B^*$ separates $x'$ and $y'$.    
Since $A$ is optimally nested with respect to $x$ and $y$ we have $\mu (A\cap B) \geq \mu (A)$ and since $B$ is optimally nested with respect to 
$x'$ and $y'$ we have $\mu (A^*\cap B^*) \geq \mu (B)$.   But it follows from  Lemma  \ref {corners_equality}
 that $\mu (A\cap B) + \mu(A^*\cap B^*) < \mu (A) + \mu(B)$ and so we have a
contradiction.    Thus $\D _n '$ is a nested set  and the proof is complete.

Note that $\ce $ is uniquely defined, since no choices are made in its construction.   This is very important in applications.
It means that $\ce $ is invariant under the automorphism group of the graph.

\end {proof}   
Recall that $\B X$ is the Boolean ring  of all cuts.   Let $\B _nX$ be the ring generated by all cuts $A$ such that $c(A) \leq n$.

\begin {theo} \label {accessible}  For every $u,v \in VX \cup \Omega X$  that can be separated by a cut,  $\ce$ contains a cut $A$ that is thin with respect to $u,v$.
The set $\ce _n$ generates $\B _nX$ and is the directed edge set of a tree.
\end{theo}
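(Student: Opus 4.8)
The plan is to prove the three assertions in order: the separation property, then that $\ce_n$ generates $\B_n X$, and finally that $\ce_n$ is the directed edge set of a tree. The separation property is foundational: the other two both invoke it, together with Proposition \ref{tight}.

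\emph{Separation.} Fix $u,v\in VX\cup\Omega X$ separated by a cut and let $n$ be the least capacity of a cut separating them, so every cut thin with respect to $u,v$ has capacity exactly $n$ and lies in $\C_n$. The heart of the matter is the following reduction: \emph{if $A$ is thin with respect to $u,v$ and crosses some $B\in\ce_{n-1}$, then some corner of $A,B$ is again thin with respect to $u,v$ and crosses strictly fewer members of $\ce_{n-1}$.} To see this, say $u\in A$, $v\in A^*$; the corner containing $u$ and the corner containing $v$ each separate $u,v$ (being contained in $A$, resp. $A^*$), so each has capacity $\ge n$. Submodularity of $c$, i.e. $c(A\cap B)+c(A^*\cap B^*)\le c(A)+c(B)$ and the analogue for the other diagonal, together with $c(B)<n$, forces these two corners to be \emph{adjacent} (otherwise two opposite corners would have capacities summing to $\ge 2n>c(A)+c(B)$). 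Now Lemma \ref{thincorner}, applied to the crossing thin cuts $A,B$, asserts that one opposite pair of corners has capacities exactly $c(B)$ and $n$; since the two corners of capacity $\ge n$ are adjacent, the capacity-$n$ member of that opposite pair must be the $u$-corner or the $v$-corner. Call it $A'$. Then $A'$ is thin with respect to $u,v$, and $A'\subseteq B$, so $A'$ is nested with $B$; and by Lemma \ref{corner}(i) every $C\in\ce_{n-1}$ nested with $A$ (hence with $B$, as $\ce_{n-1}$ is nested) is nested with every corner of $A,B$, in particular with $A'$. Thus the members of $\ce_{n-1}$ crossing $A'$ are among those crossing $A$, while $B$ crosses $A$ but not $A'$, giving the claimed decrease.

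Since only finitely many tight cuts of each capacity $\le n-1$ cross a fixed cut, $\mu(A,\ce_{n-1})$ is finite, so iterating this reduction terminates in a cut thin with respect to $u,v$ that is nested with all of $\ce_{n-1}$, i.e. lies in $\D_n$. Choosing among all such cuts one, $D$, minimising $\mu(\cdot,\D_n)$ makes $D$ satisfy $(*)$, so $D\in\D_n'\subseteq\ce$; as $c(D)=n$, this $D$ is the required member of $\ce$ thin with respect to $u,v$.

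\emph{Generation.} As $\ce_n\subseteq\B_n X$, only $\B_n X\subseteq\langle\ce_n\rangle$ needs proof: every cut $A$ with $c(A)\le n$ lies in the subring generated by $\ce_n$. Let $T_n=T(\ce_n)$ and $\nu\colon VX\to VT_n$ be as in Theorem \ref{tree}. Pullback $W\mapsto\nu^{-1}W$ is a ring homomorphism $\B T_n\to\B X$ sending each edge $B$ of $T_n$ to $B\in\ce_n$; since the cuts of a tree are generated by its single edges, its image is precisely $\langle\ce_n\rangle$. Hence it suffices to write $A=\nu^{-1}(W)$ for a cut $W$ of $T_n$. By the separation property just proved, any pair of vertices separated by $A$ is separated by a member of $\ce_n$ (of capacity $\le c(A)\le n$), so $A$ is a union of fibres of $\nu$ and equals $\nu^{-1}(W_0)$, where $W_0$ is a set of tree-vertices whose non-empty fibres are exactly those contained in $A$. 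The remaining point is that $W_0$ can be taken with finite boundary in $T_n$: each $e\in\delta A$ traces a finite path in $T_n$, and one checks that the boundary edges of $W_0$ all lie on these finitely many paths, the freedom on empty-fibre vertices being used to absorb the rest; Proposition \ref{tight} controls the count. Then $W_0\in\B T_n$ and $A\in\langle\ce_n\rangle$.

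\emph{Tree structure.} This is Theorem \ref{tree} once its three hypotheses are verified for $\ce_n$. Closure under complementation and nestedness are immediate from Theorem \ref{cE}. For the finite-interval condition, fix $A,B\in\ce_n$ with $A\subset B$; for any $C,C'\in\ce_n$ with $A\subseteq C,C'\subseteq B$ one has $C\cap C'\supseteq A\ne\emptyset$ and $C\cup C'\subseteq B\ne VX$, so nestedness forces $C\subseteq C'$ or $C'\subseteq C$, i.e. the interval is a chain. An infinite chain would contain a strictly monotone sequence $C_0\subset C_1\subset\cdots$ with union $C_\infty$ satisfying $\emptyset\ne A\subseteq C_\infty\subseteq B\ne VX$, whence $\delta C_\infty\ne\emptyset$; but any edge in $\delta C_\infty$ lies in $\delta C_i$ for all large $i$, contradicting Proposition \ref{tight} (the decreasing case is dual, using the intersection). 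So the interval is finite, and $\ce_n=ET_n$.

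I expect the conceptual crux to be the reduction lemma in the separation step, where Lemmas \ref{thincorner} and \ref{corner} must be combined to produce a thin corner that both separates $u,v$ and unknots one crossing; locating the capacity-$n$ corner among the $u$- and $v$-corners is the delicate point. The most technical obstacle, by contrast, is the finiteness of $\delta_{T_n}W_0$ in the generation step, since $\nu$ need not be surjective and one must track how $\delta A$ distributes over the paths it traces in $T_n$.
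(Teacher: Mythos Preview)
Your separation argument and your tree-structure argument are correct and essentially match the paper's. (Your separation argument is in fact more explicit than the paper's about locating the capacity-$n$ corner among the $u$- and $v$-corners; the paper simply cites Lemma~\ref{thincorner} and Lemma~\ref{corners_equality}.)

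The gap is in the generation step. Your pullback framework is sound: $\nu^{-1}\colon\B T_n\to\B X$ is a ring map with image $\langle\ce_n\rangle$, and $A$ is indeed a union of $\nu$-fibres. But the assertion that ``the boundary edges of $W_0$ all lie on these finitely many paths'' is exactly the hard part, and ``one checks'' does not discharge it. Concretely, take $E\in\ce_n$ nested with $A$, say $E\subseteq A$, and suppose both $\iota E$ and $\tau E$ have non-empty fibres. Then $\nu^{-1}(\iota E)\subseteq E\subseteq A$ forces $\iota E\in W_0$, but $\nu^{-1}(\tau E)\subseteq E^*$ may lie in $A^*$, forcing $\tau E\notin W_0$ and hence $E\in\delta_{T_n}W_0$. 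Yet nothing you have said prevents $\delta E\cap\delta A=\emptyset$: an edge of $\delta E$ goes from $E$ to $E^*$, and its $E^*$-endpoint could perfectly well lie in $A\cap E^*$ (which is non-empty as soon as $E\subsetneq A$). So the tree-edge $E$ need not lie on any path traced by an edge of $\delta A$, and the empty-fibre freedom does not help here since both endpoints have non-empty fibres. You have not shown that only finitely many such $E$ occur.

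The paper avoids this by a genuinely different mechanism: induction on $\mu(A,\ce_n)$. The inductive step is immediate, writing $A=(A\cap E)+(A\cap E^*)$ for any $E$ crossing $A$. The substance is in the base case $\mu(A)=0$: here $A$ is nested with every element of $\ce_n$ and so determines a single vertex $v$ of $T_n$, and one passes to the local graph $X_v$. The cut $A$ becomes a cut $A_v$ in $X_v$ with $\delta A_v=\delta A$, and the point is that if neither $A_v$ nor $A_v^*$ is a finite set of $\rho$-vertices, then one of them contains a ray (by K\"onig's Lemma, since $\rho$-vertices have degree at most $n$), giving a pair in $VX\cup\Omega X$ separated by $A$ but by no member of $\ce_n$---contradicting the separation property. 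This K\"onig-type argument is the missing ingredient; your path-tracing heuristic does not substitute for it.
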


\begin {proof}
To prove the first statement we need to show that for every $u,v$  there is a cut $A$ that is thin with respect to $u,v$  that is nested with every
cut $E \in \ce _{n-1}$, where $n = c(A)$, i.e  we need to show that there is a cut in $\D _n$ that is thin with respect to $u,v$.
We know that there is a cut $B$ that is thin with respect to $u,v$.    Let $k = \mu (B, \ce _{n-1})$.  If $k = 0$ then we take $A = B$.
If $k \geq 1$,  then let $C \in \ce_{n-1}$ cross $B$.
We know from Lemma \ref {thincorner} that one of the corners (say $B\cap C$  of $B, C$ is thin with respect to $u,v$.   
By Lemma \ref {corners_equality} we have $\mu (B\cap C) +\mu (B\cap C^*) < k$.  Thus if $\mu (B) =k > 0$ we can find a cut $B\cap C$ which is thin with respect
fo $u,v$ for which $\mu (B\cap C) < k$.   Thus there must be a cut $A$ which is thin with respect to $u,v$ for which $\mu (A) =0$.

  Let $B_n$ be the subring of $\B $ generated by $\ce _n$.   Clearly $B _n \subseteq \B_n$ the subring
generated by all cuts with capacity at most $n$.     We want to show that $B_n = \B_n$.  Let $A$ be a cut with $c(A) =n$.
We show that $A \in B_n$ by induction on $\mu (A) = \mu (A, \ce _n)$.   
Suppose that $\mu (A) =0$ so that $A$ is nested with every cut in $\ce _n$.   In this case if $A\notin \ce _n$ then $A$ determines a vertex $v$ of $T= T(\ce _n)$.    Thus we define 

$v  : \ce _n \rightarrow \Z _2,  v E = 0, {\rm if} \  E \subset A,  {\rm or}\  E \subset A^*,   v E = 1, {\rm if} \  E^* \subset A,  {\rm or}\  E^* \subset A^*$,   

We have that $\tau E = v$ if $E \subset A$ and $E$ is a maximal element of $\ce _n$ with this property.

Let $X_v$ be the graph defined earlier.   

The cut $A$ will become a cut $A_v$ in $X_v$.    Thus $A_v$ will consist of all $\nu$-vertices $x$ such that $x \in A$ and also all $\rho $-vertices
$y$ corresponding to cuts $C \in \ce _n$ such that $C^* \subset A$.  It will then be the case that $\delta A = \delta A_v$.
Now $A \in \B _n$ if and only if either $A_v$ or $A_v ^*$ consists of finitely many $\rho $-vertices.
If this is not the case then both $A_v$ and $A_v^*$ either contain infinitely many $\rho $-vertices of at least one $\nu $-vertex.
However if say $A_v$ consists only of  infinitely many $\rho $-vertices, then $A_v$ is a connected subset of $X_v$ consisting of  infinitely many vertices of bounded degree.  Such a set must contain the vertices of a ray by K\" onig's Lemma.    It follows that if $A$ is not in $\B _n$ then it must separate two
elements of $VX \cup \Omega X$.  But these two elements must be separated by an element of $\ce _n$ which is not the case, and so we have a
contradiction.   Thus $A_v$ or $A_v ^*$ consists of finitely many $\rho$-vertices and so $A \in B_n$.

If $\mu (A) >0$ and $E \in \ce _n$ is not nested with $A$,   then $A = A\cap E + A\cap E^*$,   and both $A\cap E$ and $A\cap E^*$ are in
$B_n$ by induction on $k = \mu (A)$.    The theorem is proved.

\end {proof}

  If  every pair $x, y \in VX \cup \Omega X$ that can
can be separated by a cut can be separated by a cut in $\ce _n$, then $\B X = \B _nX$.     If $\B X= \B _n X$  and $\nu x = \nu y =v$ then
$x,y$ cannot be separated in $X$ or $X_v$.   This means that either  both $x,y$ and every other vertex of $X_v$  have  infinite degree and $X_v$ has one end or $x = y$ and $X_v$
contains at most one   $\nu $-vertex.      If $v$ is not in the image of $\nu $, and $\B X = \B_n X$, then $X_v$ will be a one ended graph in which each
vertex has degree at most $n$.

Thomassen and Woess \cite{thomassen1993} define a graph to be {\it accessible} if there is an integer $n$ such that any two ends can be separated
by removing at most $n$ edges.    Alternative ways of defining accessibility are suggested by Theorem \ref {accessible}.
\begin {defi}   A graph $X$ is said to be $\B $-accessible if $\B X = \B _nX$ for some $n$.   A graph $X$ is said to be $\ce $-accessible if
$\ce $ satisfies the finite interval condition (iii) of Theorem \ref {tree}.
\end {defi}

A graph $X$ is $\B $-accessible if and only every pair $x,y \in VX\cup \Omega X$ that can be  separated by a cut  can be  separated
by a cut in $\ce _n$.   There is then a structure tree $T_n$ and a map $\nu : VX \cup \Omega X \rightarrow VT_n \cup \Omega T_n$ such
that $\nu x \not= \nu y $ if and only if $x, y$ are  not  separated by any  cut .

A graph $X$ is $\ce $-accessible if and only if $\ce $ is the directed edge set of a structure tree $T$ for which there is a map $\nu : VX\cup \Omega X \rightarrow VT\cup \Omega T$ in which $\nu x = \nu y$ if and only if $x,y$ are not separated by any cut $A$.
If $X$ is $\ce $-accessible, then for every $v \in VT$ the graph $X_v$  has the following structure:-

\begin {itemize}

\item [ ]  $X_v$ has at most  one end.    Every $\rho $-vertex has finite degree.    There may be no other vertices.  It there are other 
vertices, which will be  $\nu $-vertices, then there  is at most one such vertex of finite degree more than $2$.    Any $\nu $-vertex of degree $2$ lies
between two   $\rho $-vertices.  No two  $\nu $-vertices of infinite degree can be separated by a  cut in either $X$ or $X_v$.

\end {itemize}
Clearly it follows from Theorem \ref {maintheoremB} that $X$ is accessible if  $\B X = \B_nX$ for some $n$.
If every vertex of $X$ has bounded degree then $\B X = \B _nX$ for some $n$ if and only if $X$ is $\ce$-accessible,  and all three definitions of accessible are equivalent.      Wall \cite {[W]} defined a finitely generated group to be accessible if any process of successively splitting the group over finite 
subgroups eventually terminates with factors which are finite or one-ended.    By Bass-Serre theory this is equivalent to saying that the group has
an action on a tree with every edge group finite and every vertex group is  finite or one-ended.    

As proved in \cite {thomassen1993}  a finitely generated group is accessible if and only if its Cayley graph (with respect to a finite generating set) is accessible.   I proved in \cite {[D2]} that finitely presented groups are accessible and in \cite {Dun} I gave an example of a finitely generated group that is not accessible.   Thus there are vertex transitive locally finite graphs that are not accessible.

It is fairly easy to construct graphs that are $\ce $-accessible but which are not $\B $-accessible or accessible.
A graph that is $\B$-accessible is both accessible and $\ce $-accessible.

\begin {theo} A locally finite accessible graph is  $\ce $-accessible.   
\end {theo}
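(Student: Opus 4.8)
The plan is to unwind the definition: $X$ is $\ce$-accessible precisely when the canonical nested set $\ce=\bigcup_n\ce_n$ of Theorem \ref{cE} satisfies the finite interval condition (iii), i.e. for $A,B\in\ce$ with $A\subset B$ the interval $[A,B]=\{C\in\ce:A\subseteq C\subseteq B\}$ is finite. I would first observe that $[A,B]$ is totally ordered by inclusion: any $C,C'\in[A,B]$ both contain $A$ and are contained in $B$, so the corners $C\cap C'$ and $C^*\cap C'^*$ are nonempty (they contain $A$ and $B^*$ respectively, both nonempty since thin cuts are proper), and nestedness of $\ce$ forces $C\subseteq C'$ or $C'\subseteq C$. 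Hence if $[A,B]$ were infinite it would contain a strictly monotone sequence, and after replacing each cut by its complement if necessary we may assume a strictly increasing chain $C_0\subsetneq C_1\subsetneq\cdots\subseteq B$.

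Next I would dispose of the bounded-capacity case. Put $C_\infty=\bigcup_i C_i$; then $A\subseteq C_\infty\subseteq B$, so $C_\infty\neq\emptyset$ and $C_\infty^*\supseteq B^*\neq\emptyset$. Since $X$ is connected there is an edge $e$ with one vertex in $C_\infty$ and one in $C_\infty^*$, and then $e\in\delta C_i$ for every $i$ large enough that the $C_\infty$-endpoint of $e$ has already entered $C_i$. If the capacities $c(C_i)$ were bounded by some $m$, all these (distinct) $C_i$ would be tight cuts of capacity at most $m$ with $e\in\delta C_i$, contradicting Proposition \ref{tight}. Therefore any infinite chain must have $c(C_i)\to\infty$.

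It remains to rule out an infinite increasing chain of thin cuts with $c(C_i)\to\infty$, and this is where accessibility and local finiteness enter. Let $n_0$ be the accessibility constant, so every pair of ends is separated by a cut of capacity $\le n_0$; by Theorem \ref{accessible} each such pair is then separated by an edge of the structure tree $T_{n_0}$ of $\ce_{n_0}$, so the factors $X_v$ ($v\in VT_{n_0}$) of the associated decomposition each contain at most one end. For $i$ large we have $c(C_i)>n_0$, so the thin pair $(p_i,q_i)$ of $C_i$ cannot be separated by any cut of capacity $\le n_0$; thus $p_i$ and $q_i$ lie in a common factor $X_{v}$, and (using $\delta C_i=\delta(C_i)_v$ as in the proof of Theorem \ref{accessible}) the chain pushes down to a chain of thin cuts of unbounded capacity inside a single locally finite factor with at most one end. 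The problem thus reduces to: a connected locally finite graph with at most one end admits no infinite increasing chain of thin cuts, of capacity tending to infinity, that is bounded above by a proper cut.

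I expect this last reduction to be the main obstacle. The idea is to examine the limit $C_\infty$ inside the one-ended factor. Since the factor has a single end, $C_\infty$ and $C_\infty^*$ cannot each carry a ray belonging to a \emph{distinct} end; using that each $C_i$ is tight, I would argue that $\delta C_\infty$ is in fact finite, so that $C_\infty$ is itself a cut of bounded capacity, and then for $i$ large with $c(C_i)>c(C_\infty)$ the cut $C_\infty$ separates the thin pair of $C_i$ more cheaply than $C_i$ does, contradicting the minimality built into thinness. The delicate points, which I expect to require the most care, are (a) showing $\delta C_\infty$ is finite rather than infinite, where the one-ended hypothesis must be used to prevent a ``widening throat'' along which a single end passes through an infinite boundary, and (b) controlling the orientation of the thin pair $(p_i,q_i)$ relative to the chain, since the member of the pair lying in $C_i^*$ may only enter $C_\infty$ at a later stage. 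A counting argument over the disjoint annuli $C_i\setminus C_{i-1}$, combined with local finiteness (which bounds $c(C_i)$ by the degree of the vertex member of the thin pair, forcing these vertices to be distinct and of unbounded degree), should force the outer member of some pair to remain in $C_\infty^*$, yielding the contradiction with minimality.
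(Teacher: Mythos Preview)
Your overall shape is right—reduce to a one–ended factor and deal separately with chains of bounded capacity—but the argument breaks down exactly where you flag it.

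First, a gap in the reduction itself. You assert that for large $i$ the chain ``pushes down to a chain of thin cuts \ldots\ inside a single locally finite factor with at most one end''. But the factor $X_{v_i}$ detected by the thin pair $(p_i,q_i)$ of $C_i$ may vary with $i$; nothing in your argument pins all the $C_i$ to one vertex of $T_{n_0}$. The paper avoids this by reformulating $\ce$-accessibility not as the finite interval condition but as the (equivalent, via Proposition~\ref{tight}) statement that every edge $e\in EX$ lies in $\delta A$ for only finitely many $A\in\ce$. Since an edge of $X$ lies in only finitely many factors $X_v$, this localises cleanly.

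Second, your points (a) and (b) are not ``delicate''—they are genuine holes. For (a), an increasing union of finite tight cuts can perfectly well have infinite coboundary; one-endedness alone does not prevent a ``widening throat'', and you offer no mechanism that does. For (b), nothing stops the outer member $q_i$ of the thin pair from being swallowed by $C_j$ for some $j>i$ for \emph{every} $i$, so the contradiction with thinness of $C_i$ need never fire. Your proposed counting over annuli is too vague to evaluate; note in particular that local finiteness gives $c(C_i)\le\deg p_i$ with $p_i$ the vertex on the \emph{finite} side, but this only forces the $p_i$ to range over infinitely many vertices, which is no contradiction.

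The paper's route in the one-ended case is different in kind. The key consequence of one-endedness together with local finiteness is that every $A\in\ce$ has a \emph{finite} side. Orienting so that $A^*$ is finite, a decreasing sequence $A_1\supset A_2\supset\cdots$ in $\ce$ has $A_i^*$ finite and increasing with union $VX$; the paper then argues with the structure trees $T_m$: the images of the finitely many vertices of $A_i^*$ span a finite subtree of $T_{n(i)}$, and this subtree is stable under passing to $T_m$ for $m>n(i)$. This structural stability is what bounds the number of $A\in\ce$ with $e\in\delta A$, and it is the idea your limit-of-cuts approach does not supply.
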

\begin {proof}  Suppose that the any two ends of $X$ are separated by an $n$-cut, so that $X$ is accessible.
Let $T = T_n$ be the structure tree with edge set $\ce _n$.     For each $v \in VT$,  $X_v$ has at most one end.
We have to show that any edge of $X$ lies in finitely many $\delta A$ for $A \in \ce $.   We know that any edge of $X'$ lies in at most two graphs $X'_v$ and so any edge of $X$ lies in finitely many $X_v$.    It therefore suffices to show that any edge of $X_v$ lies in finitely $\delta A$ for 
$A\in \ce$.   Such an $A$ can be regarded as a cut in $X_v$.    Thus it suffices to prove the theorem when $X$ has at most one end.
Clearly the result is true if $X$ is finite.    By K\" onig's Lemma,  if $X$ is infinite than it has a ray and so in our case it has one end.
There will be an infinite sequence of elements $A _i \in \ce $ such that $A_{i+1} \subset A_i$ whose intersection is empty.   Since $X$ has one end
$A_i^*$ is finite.     If $A_i\in \ce _{n(i)}$,  then the edges in $T_{n(i)}$ that separate vertices  of $A_i^*$ form a finite subtree and this will be a subtree
of every tree $T_m$ for $m > n(i)$.   it follows that every edge of $X$ lies in finitely many $\delta A$ for $A \in \ce $.

\end {proof}

I think it ought to be possible to  drop the locally finite condition in the last theorem.

In \cite {[D3]} I showed that a vertex transitive locally finite planar graph is accessible.  
A locally finite graph $X$ has a Freudenthal compactification $\F (X)$  In which distinct ends correspond to distinct points.    Richter and Thomassen
\cite {[RT]} show that  if $X$ is  connected locally finite planar graph then $\F (X)$ can be embedded in $S^2$ and this embedding has a uniqueness
property if $X$ is $3$-connected.  If $A$ is a  tight cut in such a graph $X$, then there if a simple closed curve in  $S^2$ that intersects  $\F (X)$ in 
a finite set of points consisting of a single point in each edge of $\delta A$.     The set $\ce $ will correspond to a set of non-intersecting simple closed
curves.  The graph $X$ will be $\ce $-accessible if and only if every edge of $X$ intersects finitely many of the simple closed curves.   The structure tree corresponding to $\ce $ will then be the dual graph to the set of curves.

It would be interesting to know if every planar graph is $\ce $-accessible.
If this was the case then  a planar graph of bounded vertex degree
would be  $\B$-accessible.
A locally finite $\ce $-accessible graph has a unique decomposition in which the factors are one ended or finite.


\begin{figure}[ht!]

\centering
\begin{tikzpicture}[scale=.5]

    \draw (0,4)--(8,4);
    \draw (4,0)--(4,8);

\draw (3.6, 7.8) node  {$A$};
\draw (4.6,7.8) node  {$A^*$};
\draw (.4, 4.4) node  {$B$} ;
\draw (.4,3.6) node  {$B^*$} ;
\draw (6,6) node  {$A^*\cap B$};
\draw (2,6) node  {$A\cap B$};
\draw (2,2) node  {$A\cap B^*$};
\draw (6,2) node  {$A^*\cap B^*$};

    \draw (10,5)--(13,5)-- (13, 8);
       \draw (10,3)--(13,3)-- (13, 0);
\draw (18,5)--(15,5)-- (15, 8);
       \draw (18,3)--(15,3)-- (15, 0);

\draw [dashed] (13,3)--(15,5) ;
\draw [dashed] (15,3)--(13,5) ;

\draw (17,6) node  {$A^*\cap B$};
\draw (11,6) node  {$A\cap B$};
\draw (11,2) node  {$A\cap B^*$};
\draw (17,2) node  {$A^*\cap B^*$};

\draw (14,7) node  {$a$};
\draw (14,1) node  {$b$};
\draw (11, 4) node  {$c$};
\draw (17, 4) node  {$d$};
\draw (13.5, 3.5) node  {$e$};
\draw (14.5, 3.5) node  {$f$};

  \end{tikzpicture}

  \caption{Crossing cuts}\label{Cuts}
\end{figure}
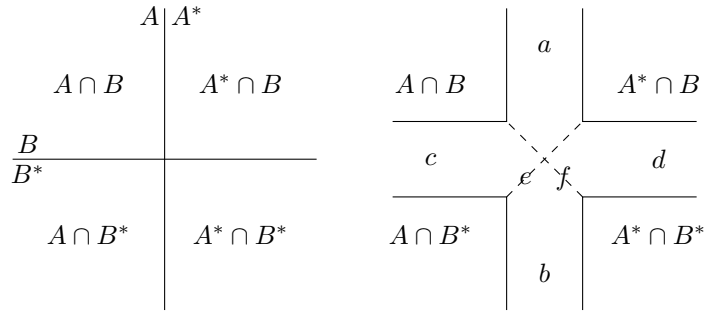

Combining the two previous theorems we have the following.

\begin {theo}\label {maintheoremB}  Let $N(X)$ be a network in which $X$ is an arbitrary connected graph.  For each $n >0$, there is a network $N(T_n)$ based on a tree  $T=T_n$ and a map $\nu : VX\cup \Omega X \rightarrow VT\cup \Omega T$, such
that  $\nu (VX ) \subset VT$ and  $\nu x = \nu y$ for any $x, y \in VX \cup \Omega X$ if and only if $x, y$ are not separated by a cut $A$ with $c(A) \leq n$. 

The network $N(T_n)$ is canonically determined and is invariant under the automorphism group of $N(X)$.
\end {theo}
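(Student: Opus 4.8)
The plan is to fix $n>0$ and take $T_n$ to be the structure tree $T(\ce_n)$ whose directed edge set is $\ce_n$; this tree exists by Theorem \ref{accessible}. I would make it into a network $N(T_n)$ by assigning to each directed edge $E\in\ce_n$ the capacity $c(E)$, a positive integer between $1$ and $n$. For $\nu$ I would reuse the map constructed just before the statement: each $x\in VX$ is identified with the homomorphism $x\colon\B X\to\Z_2$ sending a cut $A$ to $1$ precisely when $x\in A$, and $\nu x$ is the restriction of this homomorphism to $\ce_n$, which by the earlier discussion is a vertex of $T_n$; the same restriction, extended over $\Omega X$ as described there, sends an end to a vertex or an end of $T_n$. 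Thus $\nu(VX)\subset VT_n$ is built into the construction, and everything reduces to the fibre condition and the invariance claim.

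The organising observation is that each $x\in VX\cup\Omega X$ determines a ring homomorphism $\B X\to\Z_2$: for an end $\omega$ one sets $\omega(A)=1$ exactly when a representing ray is eventually in $A$, which is well defined because $\delta A$ is finite and which respects symmetric difference and intersection. Under this identification $\nu x=\nu y$ says precisely that $x$ and $y$ agree on every cut of $\ce_n$, i.e. that no cut of $\ce_n$ separates them. I would then prove the equivalence in two directions. If no cut of capacity at most $n$ separates $x$ and $y$, then since every element of $\ce_n$ is such a cut, none of them separates $x,y$, so $\nu x=\nu y$; this direction is immediate. For the converse I would invoke the first statement of Theorem \ref{accessible}: if some cut $A$ with $c(A)\leq n$ separates $x$ and $y$, then $\ce$ contains a cut $E$ that is thin with respect to $x,y$, and since thinness means $c(E)$ equals the minimal capacity of a separating cut we have $c(E)\leq c(A)\leq n$, forcing $E\in\ce_n$; as $E$ separates $x$ and $y$ the restrictions disagree on $E$, so $\nu x\neq\nu y$. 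Together these give $\nu x=\nu y$ if and only if $x,y$ are not separated by any cut of capacity at most $n$.

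Invariance under the automorphism group of $N(X)$ is then automatic: by Theorem \ref{cE} the set $\ce$, and hence $\ce_n$, is built with no choices and is uniquely determined, so the tree $T_n$, its capacity function $E\mapsto c(E)$, and the canonical map $\nu$ are each carried to themselves by any automorphism of the network. The substantive content has already been absorbed into Theorems \ref{cE} and \ref{accessible}, so the remaining work is assembly; the part I expect to require the most care is not any single inequality but the bookkeeping for ends, namely checking that the extension of $\nu$ over $\Omega X$ genuinely agrees with the homomorphism description above and that both ``thin with respect to $x,y$'' and ``$E$ separates $x,y$'' behave as expected when $x$ or $y$ is an end. These are exactly the separation conventions fixed at the start of the paper and already used in Theorem \ref{accessible}, so I anticipate no essential obstacle there beyond careful verification.
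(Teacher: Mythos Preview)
Your proposal is correct and is essentially the paper's own argument: the paper simply states ``Combining the two previous theorems we have the following'' before Theorem~\ref{maintheoremB}, and you have spelled out exactly how Theorems~\ref{cE} and~\ref{accessible} combine---the tree $T_n=T(\ce_n)$, the map $\nu$ by restriction of the evaluation homomorphism, the fibre condition from the first clause of Theorem~\ref{accessible}, and invariance from the canonical nature of $\ce$ in Theorem~\ref{cE}. Your write-up is in fact more detailed than the paper's.
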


For a finite network Theorem \ref {maintheoremB} reduces to Theorem \ref {maintheorem}.    For a finite network the structure tree
of Theorem \ref {maintheorem} will become a Gomory-Hu tree by contracting certain edges.    Thus a Gomory-Hu tree has the same properties
as our tree except that the map $\nu : VX \rightarrow VT$ is a bijection.      One obtains a Gomory-Hu tree from our structure tree by choosing
for each vertex $v \in VT$ that is not in the image of $\nu $ an incident edge of maximal capacity and then contracting all those edges.
If we choose the edges to contract in the way just described one will preserve the property that for any $s,t  \in VX$, there is a minimal cut separating
$s, t$ in the geodesic joining $\nu s, \nu t$ in $T$.
Note that while the tree of Theorem \ref {maintheorem} is uniquely determined there may be more than one Gomory-Hu tree.
This has already been noted in the tree of Fig \ref {fig:Tree}.

Let $n$ be the smallest capacity of a cut in $N(X)$.   It can be seen fairly easily from Fig \ref {Cuts} that if   $A, B$ are cuts with $c(A) = c(B)  =n$ and $A$ is not nested with $B$ , then $n = 2m$ and $\delta A$ partitions $\delta A = \delta (A\cap B)\cup \delta (A\cap B^*)$,  where each
of $\delta (A\cap B)$ and $\delta (A\cap B^*)$ contain $m$ edges.   If $C$ is also a cut with capacity $n$ that is not nested with $A$ then
one can show that the partition of $\delta A$ given by $C$ is the same as that corresponding to $B$.   This result is crucial in the cactus representation of mincuts by Dinits, Karzanov and Lomonosov \cite {[DKL]}.   A cactus is a simple graph in which every edge lies in at most one  cycle.    In the cactus representation  there is a cactus $K$ and a mincut in $N(X)$ corresponds to a tight cut in $K$ with capacity at most $2$.
In our notation the mincuts in $N(X)$ are the elements of $\C _n= \D_n$ and the elements of $\D _n ' = \ce _n$ correspond to the tight cuts $E$ in
$K$ in which $\delta E$ has one edge or it consists of adjacent edges of a cycle.

Evangelidou and  Papasoglu \cite {[EP]} use a similar cactus argument but for minimal cuts separating ends of a graph to give a proof of Stallings' Theorem.

We illustrate our results with some simple examples.

In the example of Fig \ref {ladder}, the graph $X$ is an infinite ladder, and every edge has capacity one.  The two ends of $X$ and some vertices are separated in $T_2$ and all ends and vertices in $T_3$.
The vertices of $T_3$ on the red central line are not in the image of $\nu$.
Note the cuts $A, A^*$ given by the brown dashed line are not thin with respect to the two ends of the graph since $c(A) = 3$ and the two ends are
separated by a cut with capacity $2$.   However $A$ is thin with respect to the two vertices $u,v$ of one rung of the ladder.   However $A$
is not optimally nested with respect to $u,v$ since it is not nested with the cut $\alpha A$ where alpha is the automorphism swapping the two sides
of the ladder.  The cuts given by the blue dashed lines are nested with every thin cut and so are optimally nested with respect to any pair of vertices that they separate.

\begin{figure}[htbp]
\centering
\begin{tikzpicture}[scale=.7]
\draw [thick]  (0,0) -- (10, 0)  ;
\draw [thick](0,2) -- (10, 2)  ;
\draw [thick, red]  (12,1) -- (22, 1)  ;
\draw [thick] (2,0) --(2,2) ;
\draw [thick] (4,0) --(4,2) ;
\draw [thick](6,0) --(6,2) ;
\draw [thick] (8,0) --(8,2) ;
\draw [very thick, dashed, brown]  (3.4, 3) -- (3.4, 1.6) --( 4.6, .4) -- (4.6, -1) ;
\draw [dashed, red] (7,-1) --(7,3) ;
\draw [dashed, red] (5,-1) --(5,3) ;
\draw [dashed, red] (3,-1) --(3,3) ;
\draw [dashed, red] (1,-1) --(1,3) ;
\draw [dashed, red] (9,-1) --(9,3) ;

\draw [dashed, blue ] (1.7,3) --(1.7,1.7)--( 2.3, 1.7) -- (2.3, 3) ;
\draw [dashed, blue ] (3.7,3) --(3.7,1.7)--( 4.3, 1.7) -- (4.3, 3) ;
\draw [dashed, blue ] (5.7,3) --(5.7,1.7)--( 6.3, 1.7) -- (6.3, 3) ;
\draw [dashed, blue ] (7.7,3) --(7.7,1.7)--( 8.3, 1.7) -- (8.3, 3) ;

\draw [dashed, blue ] (1.7,-1) --(1.7,.3)--( 2.3, .3) -- (2.3, -1) ;
\draw [dashed, blue ] (3.7,-1) --(3.7,.3)--( 4.3, .3) -- (4.3, -1) ;
\draw [dashed, blue ] (5.7,-1) --(5.7,.3)--( 6.3, .3) -- (6.3, -1) ;
\draw [dashed, blue ] (7.7,-1) --(7.7,.3)--( 8.3, .3) -- (8.3, -1) ;
\filldraw (17, 4) circle (3pt);
\draw (17,5) node  {$T_1$};
\draw (17,2) node  {$T_2$};
\draw (17,-1) node  {$T_3$};
\filldraw (17, 4) circle (3pt);
\filldraw (14, 1) circle (3pt);
\filldraw (16,1) circle (3pt);
\filldraw (18,1) circle (3pt);
\filldraw (20,1) circle (3pt);
\draw [thick, red]  (12,1) -- (22, 1)  ;
\draw [thick, red]  (12,-2) -- (22, -2)  ;
\draw [thick,blue] (14,-3) --(14,-1) ;

\draw [thick,blue] (16,-3) --(16,-1) ;
\draw [thick,blue] (18,-3) --(18,-1) ;
\draw [thick,blue] (20,-3) --(20,-1) ;
\filldraw (14, -1) circle (3pt);
\filldraw (16,-1) circle (3pt);
\filldraw (18,-1) circle (3pt);
\filldraw (20,-1) circle (3pt);

\filldraw (14, -2) circle (3pt);
\filldraw (16,-2) circle (3pt);
\filldraw (18,-2) circle (3pt);
\filldraw (20,-2) circle (3pt);
\filldraw (14, -3) circle (3pt);
\filldraw (16,-3) circle (3pt);
\filldraw (18,-3) circle (3pt);
\filldraw (20,-3) circle (3pt);
\draw (4,-.2) node  {$u$};
\draw (4, 2.2) node  {$v$};
\draw (16,-3.4) node  {$\nu u$};
\draw (16, -.6) node  {$\nu v$};
\end{tikzpicture}

\caption {Cutting Up a Ladder} \label{ladder}
\end{figure}
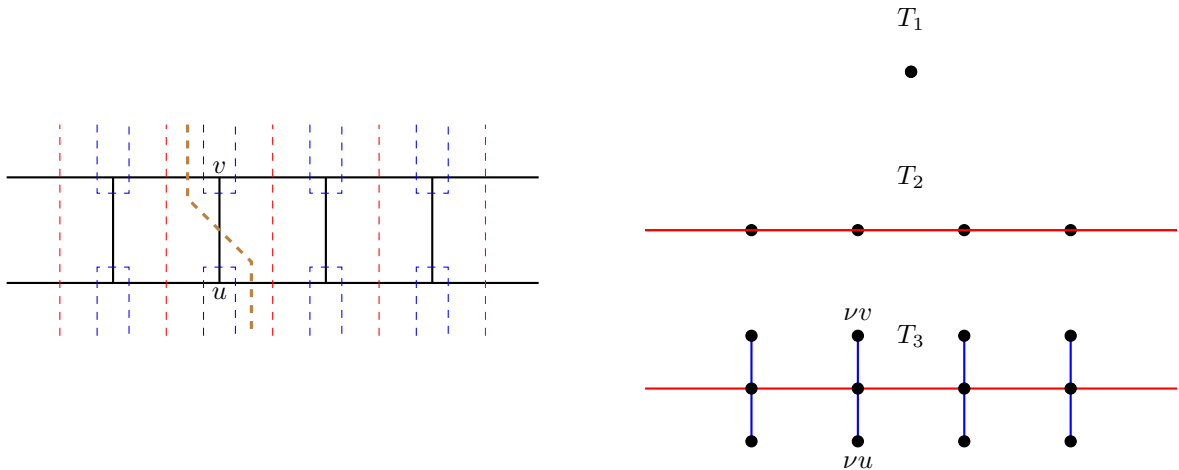

\eject

In the example of Fig \ref {ladder2}, all the  vertices are separated in $T_4$ and all ends and vertices in $T_5$.  There is a  vertex  of infinite degree in each of $T_3$ and $T_4$.

\begin{figure}[htbp]
\centering
\begin{tikzpicture}[xscale=.7, yscale = .8]
\draw [thick]  (0,1) -- (10, 1)  ;
\draw [thick]  (0,3) -- (10, 3)  ;
\draw [thick]  (0,-1) -- (10, -1)  ;

\draw [thick]  (0,0) -- (10, 0)  ;
\draw [thick](0,2) -- (10, 2)  ;
\draw [thick] (2,-1) --(2,3) ;
\draw [thick] (4,-1) --(4,3) ;
\draw [thick](6,-1) --(6,3) ;
\draw [thick] (8,-1) --(8,3) ;
\draw [dashed, red] (7,-2) --(7,4) ;
\draw [dashed, red] (5,-2) --(5,4) ;
\draw [dashed, red] (3,-2) --(3,4) ;
\draw [dashed, red] (1,-2) --(1,4) ;
\draw [dashed, red] (9,-2) --(9,4) ;

\draw [dashed, blue ] (1.7,2.3) --(1.7,1.7)--( 2.3, 1.7) -- (2.3, 2.33)--cycle ;
\draw [dashed, blue ] (3.7,2.3) --(3.7,1.7)--( 4.3, 1.7) -- (4.3, 2.3)--cycle ;
\draw [dashed, blue ] (5.7,2.3) --(5.7,1.7)--( 6.3, 1.7) -- (6.3, 2.3)--cycle ;
\draw [dashed, blue ] (7.7,2.3) --(7.7,1.7)--( 8.3, 1.7) -- (8.3, 2.3)--cycle ;

\draw [dashed, blue ] (1.7,-.3) --(1.7,.3)--( 2.3, .3) -- (2.3, -.3)--cycle ;
\draw [dashed, blue ] (3.7,-.3) --(3.7,.3)--( 4.3, .3) -- (4.3, -.3) --cycle;
\draw [dashed, blue ] (5.7,-.3) --(5.7,.3)--( 6.3, .3) -- (6.3, -.3)--cycle ;
\draw [dashed, blue ] (7.7,-.3) --(7.7,.3)--( 8.3, .3) -- (8.3, -.3) --cycle;

\draw [dashed, blue ] (1.7,1.3) --(1.7,.7)--( 2.3, .7) -- (2.3, 1.3)--cycle ;
\draw [dashed, blue ] (3.7,1.3) --(3.7,.7)--( 4.3, .7) -- (4.3, 1.3) --cycle;
\draw [dashed, blue ] (5.7,1.3) --(5.7,.7)--( 6.3, .7) -- (6.3, 1.3)--cycle ;
\draw [dashed, blue ] (7.7,1.3) --(7.7,.7)--( 8.3, .7) -- (8.3, 1.3) --cycle;

\draw [dashed, thick, brown ] (1.7,4) --(1.7,2.7)--( 2.3, 2.7) -- (2.3, 4) ;

\draw [dashed, thick, brown ] (3.7,4) --(3.7,2.7)--( 4.3, 2.7) -- (4.3, 4) ;
\draw [dashed, thick, brown ] (5.7,4) --(5.7,2.7)--( 6.3, 2.7) -- (6.3, 4) ;
\draw [dashed, thick, brown ] (7.7,4) --(7.7,2.7)--( 8.3, 2.7) -- (8.3, 4) ;

\draw [dashed, thick, brown ] (1.7,-2) --(1.7,-.7)--( 2.3, -.7) -- (2.3, -2) ;

\draw [dashed, thick, brown ] (3.7,-2) --(3.7,-.7)--( 4.3, -.7) -- (4.3, -2) ;
\draw [dashed, thick, brown ] (5.7,-2) --(5.7,-.7)--( 6.3, -.7) -- (6.3, -2) ;
\draw [dashed, thick, brown ] (7.7,-2) --(7.7,-.7)--( 8.3, -.7) -- (8.3, -2) ;

\draw [thick, brown]   (14,2) -- (17, 3) --(15,2) ;
\draw [thick, brown]   (16,2) -- (17, 3) --(17,2) ;
\draw [thick, brown]   (18,2) -- (17, 3) --(19,2) ;
\draw [thick, brown]   (20,2) -- (17, 3) ;

\draw [thick, brown]   (14,-1) -- (17, -.25) --(15,-1) ;
\draw [thick, brown]   (16,-1) -- (17, -.25) --(17,-1) ;
\draw [thick, brown]   (18,-1) -- (17, -.25) --(19,-1) ;
\draw [thick, brown]   (20,-1) -- (17, -.25) ;

\draw [thick, blue]   (14,.5) -- (17, -.25) --(14.7,.5) ;
\draw [thick, blue]   (15.3,.5) -- (17, -.25) --(16,.5) ;
\draw [thick, blue]   (16.7,.5) -- (17, -.25) --(17.3,.5) ;
\draw [thick, blue]   (18,.5) -- (17, -.25) --(18.7,.5) ;
\draw [thick, blue]   (19.3,.5) -- (17, -.25) --(20,.5) ;

\draw [thick, brown]   (14,2) -- (17, 3) --(15,2) ;
\draw [thick, brown]   (16,2) -- (17, 3) --(17,2) ;
\draw [thick, brown]   (18,2) -- (17, 3) --(19,2) ;
\draw [thick, brown]   (20,2) -- (17, 3) ;

\filldraw (17, 3) circle (3pt);
\draw (14,5) node  {$T_1= T_2$};
\draw (13,3) node  {$T_3$};
\draw (13,0) node  {$T_4$};
\draw (13,-4) node  {$T_5$};

\filldraw (17, 5) circle (3pt);
\filldraw (14, 2) circle (3pt);
\filldraw (16,2) circle (3pt);
\filldraw (18,2) circle (3pt);
\filldraw (20,-1) circle (3pt);
\filldraw (17, -.25) circle (3pt);
\filldraw (20, 2) circle (3pt);

\filldraw (15, 2) circle (3pt);
\filldraw (17,2) circle (3pt);
\filldraw (19,2) circle (3pt);
\filldraw (19,-1) circle (3pt);

\filldraw (14, -1) circle (3pt);
\filldraw (16,-1) circle (3pt);
\filldraw (18,-1) circle (3pt);
\filldraw (15, -1) circle (3pt);
\filldraw (17,-1) circle (3pt);
\filldraw (19,-5) circle (3pt);
\filldraw (14, -5) circle (3pt);
\filldraw (16,-5) circle (3pt);
\filldraw (18,-5) circle (3pt);
\filldraw (15, -5) circle (3pt);
\filldraw (17,-5) circle (3pt);
\filldraw (19,-5) circle (3pt);

\filldraw (20,-1) circle (3pt);

\filldraw (14, -3) circle (3pt);
\filldraw (16,-3) circle (3pt);
\filldraw (18,-3) circle (3pt);
\draw [thick,  red]  (14,-4)--(20,-4) ;

\filldraw (15, -4) circle (3pt);
\filldraw (17,-4) circle (3pt);
\filldraw (19,-4) circle (3pt);

\filldraw (14, .5) circle (3pt);
\filldraw (16,.5) circle (3pt);
\filldraw (18,.5) circle (3pt);
\filldraw (20,.5) circle (3pt);

\filldraw (14.7, .5) circle (3pt);
\filldraw (16.7,.5) circle (3pt);
\filldraw (18.7,.5) circle (3pt);

\filldraw (15.3, -3) circle (3pt);
\filldraw (17.3,.-3) circle (3pt);
\filldraw (19.3,-3) circle (3pt);

\filldraw (14.7, -3) circle (3pt);
\filldraw (16.7,-3) circle (3pt);
\filldraw (18.7,-3) circle (3pt);

\filldraw (15.3, .5) circle (3pt);
\filldraw (17.3,.5) circle (3pt);
\filldraw (19.3,.5) circle (3pt);
\draw [thick, blue] (14, -3) --(15,-4) -- (15.3, -3) ;
\draw [thick, blue] (16, -3) --(17,-4) -- (17.3, -3) ;
\draw [thick, blue] (18, -3) --(19,-4) -- (19.3, -3) ;
\draw [thick, blue] (14.7, -3) --(15,-4)  ;
\draw [thick, blue] (16.7, -3) --(17,-4)  ;
\draw [thick, blue] (18.7, -3) --(19,-4)  ;
\filldraw (14, .5) circle (3pt);
\filldraw (16,.5) circle (3pt);
\filldraw (18,.5) circle (3pt);
\filldraw (20,.5) circle (3pt);

\draw [thick, brown]   (14,-5) -- (15,-4) --(15,-5) ;
\draw [thick, brown]   (16,-5) -- (17, -4) --(17,-5) ;
\draw [thick, brown]   (18,-5) -- (19, -4) --(19,-5) ;

\end{tikzpicture}
\caption {Cutting Up Another Ladder}\label {ladder2}
\end{figure}
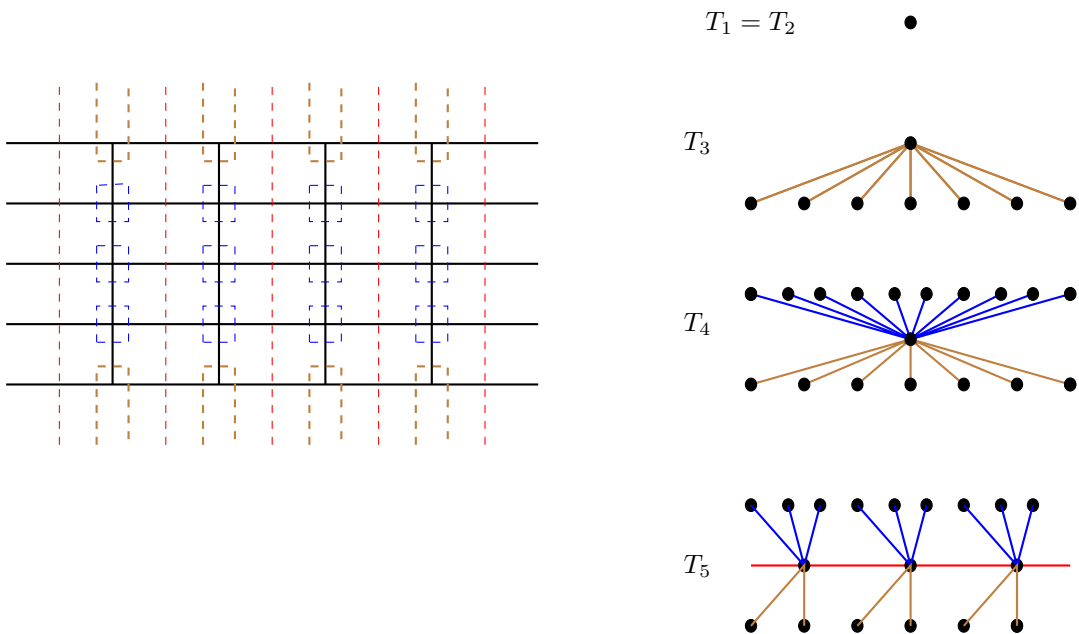

A finitely generated group $G$ is said to have more than one end,  if a Cayley graph $X = X(G, S)$ of $G$ corresponding to a finite generating set $S$ has more
than one end.
\begin {theo} [Stallings' Theorem]   If $G$ is a finitely generated group with more than one end, then $G$ has a non-trivial action on a tree $T$
with finite edge stabilizers. 
\end {theo}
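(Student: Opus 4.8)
The plan is to realise Stallings' Theorem as a special case of the structure-tree machinery of Theorems~\ref{cE}, \ref{accessible} and \ref{maintheoremB}. First I would take $X=X(G,S)$ to be a Cayley graph for a finite generating set $S$, so that $X$ is connected and locally finite and $G$ acts on $X$ by left multiplication \emph{freely} and transitively on $VX$; in particular the action on directed edges is free. Give every edge capacity $1$, so that $c(E)=|\delta E|$ for every cut $E$. Saying that $G$ has more than one end means exactly that $X$ has at least two edge ends, so some cut separates two ends; let $n$ be the least capacity of a cut separating a pair of ends. Applying the canonical construction of Theorem~\ref{cE} I obtain the $G$-invariant nested set $\ce$ and, by Theorem~\ref{accessible}, the structure tree $T=T_n=T(\ce_n)$, on which $G$ acts because $\ce$ is invariant under $\mathrm{Aut}(X)\supseteq G$; the map $\nu:VX\cup\Omega X\to VT\cup\Omega T$ is then a $G$-map. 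By the first assertion of Theorem~\ref{accessible}, $\ce_n$ contains a cut $A$ that is thin with respect to the two separated ends, so $T$ has at least one edge.

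Next I would check that all edge stabilisers are finite. An edge of $T$ is a cut $E\in\ce_n$ with $|\delta E|\le n$, and any $g\in G_E$ satisfies $g\,\delta E=\delta(gE)=\delta E$, so $G_E$ permutes the finite set $\delta E$. The kernel of the resulting homomorphism $G_E\to\mathrm{Sym}(\delta E)$ fixes a directed edge of $X$ and hence is trivial because $G$ acts freely; thus $G_E$ embeds in a finite symmetric group and is finite. If the action has inversions, i.e.\ some $g$ with $gE=E^*$, I would first subdivide each edge of $T$ once; this keeps edge stabilisers finite and removes inversions.

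It remains to prove the action is non-trivial, and this is the main obstacle: I must exclude a global fixed point. Ruling out a fixed \emph{end} is easy, since $G$ is finitely generated: a fixed end would express $G$ as a strictly ascending union $\bigcup_k G_{w_k}$ of vertex stabilisers along a ray, as recalled in the Introduction, but finite generation forces all generators into some $G_{w_N}$, so $G=G_{w_N}$ fixes a vertex. Hence it suffices to rule out a fixed \emph{vertex} $v_0$. The point is that $A$ separates two vertices $x\in A$ and $y\in A^*$ of $X$ (both sets are infinite, hence non-empty), so $\nu x\ne\nu y$ and the edge $A$ lies on the geodesic $[\nu x,\nu y]$ in $T$. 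Since $G$ is transitive on $VX$ I can choose $g$ with $gx=y$, whence $g\,\nu x=\nu y$ carries $\nu x$ across the edge $A$ to the opposite component of $T-A$. Running this over the whole orbit $\nu(VX)=G\,\nu x_0$, the images sit on both sides of $A$; combined with the fact that $GA$ is infinite (as $[G:G_A]=\infty$, because $G$ is infinite and $G_A$ finite), one shows that $\nu(VX)$ is an unbounded orbit. As a group fixing a vertex $v_0$ has every orbit contained in a sphere about $v_0$, hence bounded, this contradicts a fixed vertex. The delicate point here — where I expect to spend real effort — is controlling the case in which $T$ has vertices of infinite degree, so that ``infinitely many translates of $A$'' need not by itself force an unbounded orbit; the genuine input is that the \emph{minimal}-capacity cut $A$ separates two ends, which prevents the factor $X_{v_0}$ from absorbing both ends and hence prevents $v_0$ from being fixed.

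Finally, having excluded a fixed vertex and a fixed end, and having removed inversions by the subdivision above, the trichotomy for group actions on trees recalled in the Introduction guarantees an element $g\in G$ that shifts some edge $e$ of $T$. Contracting all edges outside the orbit of $e$ yields a tree $T_e$ on which $G$ acts non-trivially with a single orbit of edge pairs, whose edge stabiliser is the finite group $G_e$. This exhibits the required non-trivial action of $G$ on a tree with finite edge stabilisers, completing the proof.
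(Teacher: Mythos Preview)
Your setup and your treatment of finite edge stabilisers match the paper exactly: build $T=T_n$ for the least $n$ separating two ends, observe that the canonical construction makes $T$ a $G$-tree, and note that $G_E$ permutes the finitely many (directed) edges of $\delta E$ with trivial point-stabilisers.

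The divergence is in the non-triviality step, and here your argument has the gap you yourself flag. Knowing that $\nu(VX)$ meets both sides of $A$ and that $GA$ is infinite does \emph{not} force an unbounded orbit: if $G$ fixed $v_0$ then every translate $gA$ would sit at the same depth from $v_0$, and since $v_0$ may have infinite degree this is perfectly consistent with infinitely many translates. Your remark about $X_{v_0}$ ``absorbing both ends'' points in a workable direction, but as written it is only a hint, not an argument, and closing it would take real work (you would need to show, for instance, that no vertex of $X$ maps to $v_0$ and then derive a contradiction from the structure of $X_{v_0}$).

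The paper avoids this entirely with a short direct argument. Take $D\in\ce_n$ separating two ends, so both $D$ and $D^*$ are infinite while $\delta D$ is finite. By vertex-transitivity and local finiteness of $X$ one can push the finite set of vertices of $\delta D$ deep into $D$: choose $g\in G$ with all vertices of $g\delta D$ contained in $D$, and similarly $h\in G$ with all vertices of $h\delta D$ contained in $D^*$. Since $\ce_n$ is nested, having $g\delta D\subset D$ forces $gD\subsetneq D$ or $D\subsetneq gD$ or $gD\subset D^*$ or $D^*\subset gD$; combining the three elements $g,h,gh$ one finds some $x$ with $xD\subsetneq D$ or $D\subsetneq xD$. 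Such an $x$ shifts the edge $D$ of $T$ and therefore fixes no vertex, so the action is non-trivial. This single paragraph replaces your entire fixed-vertex/fixed-end analysis; you do not need to subdivide for inversions or contract to $T_e$ to reach the stated conclusion.
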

\begin {proof}  Let $n$ be the smallest integer for which there a cut $A$ such that $|\delta A| = n$ which separates two ends $s, t$.
The structure tree $T = T_n$ will have the required property.    Here we use the fact that the construction of $T$ is canonical and so is invariant 
under the action of automorphisms.    Thus the action of $G$ on $X$ gives an action on $T$.    Each edge of $T$ is a cut $C$ with $|\delta C| \leq n$.
The stabilizer of $C$ will permute the edges of $\delta C$ and will therefore be finite.    We also have to show that action is non-trivial.
We know there is a cut  $D$ in $ET$ that separates a pair of ends.  For such a cut both $D$ and $D^*$ are infinite.    The action of $G$ on $X$ is
vertex transitive.    There exists $g \in G$ such that the vertices of  $g\delta D $ are contained in $D$ and an element $h\in G$ such that the vertices
of $h\delta D$ are contained in $D^*$.  It hen follows that for $x = g, x = h$ or $x =  gh$ we have $xD$ is a proper subset of $D$ or $D$ is a proper subset of $xD$.   It follows from elementary Bass-Serre theory that $x$ cannot fix a vertex of $T$.
\end {proof}
This proof is essentially that of \cite {Kroen2009}.

In any tree $T$ if $p$ is a vertex and $Q$ is a set of  unoriented edges, then there is a unique set of vertices $P$ such that $v \in P$ then the geodesic $[v,p]$
contains an odd number of edges from $Q$.   We then have $\delta P = Q$.
Note that $\B T = \B_1T$ and every element of $\B T$ is uniquely determined by the set $Q$ together with the information for a fixed  $p\in VT$ whether $p \in A$ or $p \in A^*$.    The vertex $p$ induces an orientation $\co _p$ on the set of pairs $\{ e, \bar e\}  $ of oriented edges by requiring that 
$e \in \co$ if $e$ points at $p$.      For $A \in \B T$,  $A$ is uniquely determined by $\delta A $ together with the orientation $\co _p \cap \delta A$  of the edges of $\delta A$.

In $X$ it is the case that a cut $A$ is uniquely determined by $\delta A$  together with the information for a fixed  $p  \in VX$ whether $p \in A$ or $p \in A^*$.

Since $\B _nX$ is generated by $\ce _n = ET_n$,    the cut $A$ can be expressed in terms of a finite  set of oriented edges of $T_n$.      This set is not usually uniquely
determined.    Thus if $\nu $ is not surjective, and $v$ is not in the image of $\nu $,  and  the set of edges incident with $v$ is finite, then $VX$  is the union of these elements in $\B X$.       The empty set is the intersection of the complements of these sets.     Orienting the edges incident with
$v$ towards $v$ gives the empty set and orienting them away from $v$ gives all of $VX$.
However there is a canonical way of expressing an element of $\B _nX$ in terms of the generating set $\ce _n$.      To see this let $A \in \B _nX  -  \B _{n-1}X$.    There are only finitely many $C \in \ce _n$ with which $C$ is not nested.
This number is $\mu (A, \ce _n)  = \mu (A)$.  We use induction on $\mu (A)$.   Our induction hypothesis is that there is a canonically defined way
of expressing $A$ in terms of the $\ce _n$.    Any two ways of expressing $A$ in terms of $\ce _n$ differ by an expression which gives the empty
set in terms of $\ce _n$.   Such an expression will correspond to a finite set of vertices each of which has finite degree in $T_n$ and  none of which is in the image of $\nu $.    The canonical expression is obtained if there is a unique way of saying whether or not  each such vertex is in the expression for $A$.     Thus the canonical expression for $A$ is determined by  a set of vertices of $VT$ which consists of the vertices of 
$\nu (A)$ together with a recipe for deciding for each vertex which is not in the image of $\nu $ whether it is in the expression for $A$.

Suppose $\mu (A) =0$, so that $A$ is nested with every $C \in \ce _n$, and neither $A$ nor $A ^*$ is empty.      If $A \in \ce _n$, then
this gives an obvious way of expressing $A$ in terms of the $\ce _n$.   If $A$ is not in $\ce _n$, then it corresponds to a unique vertex 
$z \in VT_n$.   Thus because $\mu (A) = 0$,  $A$ induces an orientation of the edges of  $\ce _n$.   To see this,  let $C \in \ce _n$, then  just
one of $C \subset A,  C^* \subset A,  C\subset A^*, C^* \subset A^*$ holds.   From each pair $C, C^*$ we can choose $C$ if $C \subset A$ or
$C \subset A^*$ and we choose $C^*$ if $C^*\subset A$ or $C^* \subset A^*$.    Let $\co$ be this subset of $\ce $.    Then If $C \in \co$ and $D \in \ce
$ and $D \subset C$,  then $D \in \co$.   This means that the orientation $\co$ determines a vertex $z$ in $VT _n$.  Intuitively the edges of $\co $ point at the vertex $z$.
 It can be seen that $A$ or $A^*$ will be the union of finitely many edges $E$ of $\ce _n = ET_n$,  all of which have $\tau E = z$.    If $A$ is such a union, then
we use this to express $A = C_1\cup C_2, \dots \cup C_k$.   If $A$ is not such a union, but $A^* = C_1\cup C_2, \dots \cup C_k$, then we write
$A = (C_1\cup C_2, \dots \cup C_k)^* = C_1^*\cap C_2^*\cap \dots \cap C_k^*$.        
 Note that this gives a unique way of expressing cuts corresponding to a vertex $z$ of finite degree not in the image of $\nu$.   The vertex $z$ is included in the expression for $A^*$ if and only if only  finitely many cuts in $\ce _n$ incident with $z$ and pointing at $z$ are subsets of $A$.
     Suppose then that the hypothesis is true for elements $B \in \B _nX$ for which $\mu (B) < \mu (A)$.
Let $C \in \ce _n$ be not nested with $A$.   Then $ \mu (A\cap C) +\mu (A\cap C^*) \leq  \mu (A)$.      Thus each of $A\cap C$ and $A\cap C^*$
can be expressed in a unique way in terms of the $\ce _n$.  If at most one of these expressions involves $C$ then we take the expression for 
$A$ to be the union of the two expressions for $A\cap C$ and $A\cap C^*$.   If both of the expressions involve $C$, then we take the expression
for $A$ to be the union of the two expression with $C$ deleted.      The expression obtained for $A$ is independent of the choice of $C$.
In fact the decomposition will involve precisely those $C$ for which  $C$ occurs in just one of  the decompositions for  $A\cap C$ and $A\cap C^*$.
We therefore have a canonical decomposition for $A$.     To further  clarify this proof observe the following.   The edges $C$ which are not nested
with $A$ form the edge set of a finite subtree $F$ of $T_n$.  If $EF \not= \emptyset$   we can  choose $C$ so that it is a twig of $F$, i.e. so that one vertex $z$  of $F$
is only incident with a single edge $C$ of $F$.  By relabelling $C$ as $C^*$ if necessary we can assume that $\mu (A\cap C) = 0$. 
The vertex determined by $A\cap C$ as above is $z$,  and we have spelled out the recipe for if this vertex is to be included in the expression
for $A$.   The induction hypothesis gives us a canonical expression for $A\cap C^*$, which together with the expression for $A\cap C$
gives the expression for $A$.

\subsection  {Flows in Networks} In this subsection we give a version of the Max-Flow Min-Cut Theorem for arbitrary networks that reduces to the
the usual theorem for a finite network.
Let $X, N$ be as before.
 For $s, t \in VX\cup \Omega X$ an {\it $(s, t)$-flow } in $N$ is a map  $f :  EX \rightarrow \{ 0, 1, 2, \dots \}$  together with an assignment of a direction to each edge  $e$ for which $f(e) \not= 0$  so that its vertices are $\iota e$ and $\tau e$ and the following holds.

\begin {itemize}
\item [(i)]  For each $v \in VX$ there are only finitely many incident edges $e$ for which $f(e) \not= 0$.
\item [(ii)] If  $f^+(v) = \Sigma ( f(e) | \iota e = v)$ and $f^-(v) = \Sigma (f(e) | \tau e =v) $, then $f^+(v) =f^-(v)$ for every $v \not= s, t$.

\item [(ii)]    For every cut $A$ that does not separate $s,t$   If we put $f^+ (A) =  \Sigma  ( f(e) | e\in \delta A,  \iota e \in A) $ and $f^-(A) = \Sigma (f (e) | e\in \delta A,  \iota e \in A^*)$, then  we have $f^+(A) = f^-(A)$. 
That is, for  every cut that does not separate $s, t$ , the flow into the cut is the same as the flow out.

\end {itemize}  
\begin {prop} For any $(s, t)$-flow and any cut $A$ such that $s\in A, t\in A^*$, the value  $f^+(A)  -  f^-(A)$ does not depend on $A$.  This value 
is denoted $|f|$.
\end {prop}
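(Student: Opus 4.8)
The plan is to reduce the statement to the following: if $A$ and $B$ are two cuts with $s\in A\cap B$ and $t\in A^*\cap B^*$, then $f^+(A)-f^-(A)=f^+(B)-f^-(B)$. Since any two cuts satisfying the hypothesis $s\in A,\ t\in A^*$ automatically put $s$ in their intersection and $t$ in the intersection of their complements, this suffices. Throughout I write $\partial f(C)=f^+(C)-f^-(C)$ for the net flow leaving a cut $C$; because $\delta C$ is finite, this is a well-defined finite integer for every cut $C$.

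First I would establish an additivity lemma: if $S$ and $T$ are disjoint cuts whose union is again a cut, then $\partial f(S\cup T)=\partial f(S)+\partial f(T)$. For cuts (or corners) $X,Y$ let $e(X,Y)$ denote the net flow from $X$ to $Y$, namely the sum of $f(g)$ over edges $g$ directed from $X$ into $Y$ minus the sum over edges directed from $Y$ into $X$, so that $e(X,Y)=-e(Y,X)$. Writing $U=(S\cup T)^*$, the three sets $S,T,U$ partition $VX$, and one has $\partial f(S)=e(S,T)+e(S,U)$, $\partial f(T)=e(T,S)+e(T,U)$, and $\partial f(S\cup T)=e(S,U)+e(T,U)$. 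The lemma then follows from $e(S,T)+e(T,S)=0$. Every edge contributing to any of these sums lies in the finite set $\delta S\cup\delta T$, so all the sums are finite and the rearrangement is legitimate.

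Now take $A,B$ as above and consider the four corners $A\cap B,\ A\cap B^*,\ A^*\cap B,\ A^*\cap B^*$, each of which is a cut. The key point is that the two mixed corners $A\cap B^*$ and $A^*\cap B$ do not separate $s$ and $t$: using the separation conventions for vertices and ends, the sets $A\cap B$ and $A^*\cap B^*$ are both contained in the complement of each mixed corner, and $s$ lies on the $A\cap B$ side while $t$ lies on the $A^*\cap B^*$ side (when $s$ or $t$ is an end, this is the statement that a representing ray is eventually in $A\cap B$, respectively in $A^*\cap B^*$). Hence condition (iii) in the definition of an $(s,t)$-flow applies and gives $\partial f(A\cap B^*)=0$ and $\partial f(A^*\cap B)=0$. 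Applying the additivity lemma to the partitions $A=(A\cap B)\sqcup(A\cap B^*)$ and $B=(A\cap B)\sqcup(A^*\cap B)$ yields $\partial f(A)=\partial f(A\cap B)=\partial f(B)$, which is exactly the required independence.

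The step I expect to require the most care is the bookkeeping underlying the additivity lemma in the infinite setting: one must confirm that each corner flow $e(X,Y)$ involves only finitely many edges — which holds because such edges lie in $\delta A$ or $\delta B$ — so that splitting $\partial f$ across corners and cancelling via antisymmetry is valid. The second point to spell out carefully is that the mixed corners genuinely fail to separate $s$ and $t$, especially in the case where $s$ or $t$ is an end, since there one must argue via eventual containment of a representing ray in order to invoke (iii).
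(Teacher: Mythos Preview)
Your proof is correct and rests on the same idea as the paper's: the mixed corners $A\cap B^*$ and $A^*\cap B$ fail to separate $s$ and $t$, so condition~(iii) forces their net flow to vanish, and an additivity argument then pins $\partial f(A)$ and $\partial f(B)$ both to $\partial f(A\cap B)$. The only organisational difference is that the paper first reduces to the nested case $A\subset B$ (noting that $A\cap B$ again separates $s,t$), so it only has to handle the single corner $A^*\cap B$, whereas you treat both mixed corners symmetrically via your explicit additivity lemma; the underlying bookkeeping is the same.
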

\begin {proof}
Let $A, B$ be cuts separating $s, t$.    Because $A\cap B$ also separates $s, t$, it  suffices to prove that $f^+(A)  -  f^-(A) = f^+(B)  -  f^-(B)$
when $A\subset B$.    Let $e \in \delta A$.  Either $e \in \delta B$ or $e \in \delta (B\cap A^*)$.   If $e' \in \delta B$ is not in $\delta A$ then $e' \in \delta 
(B\cap A^*$ and $\delta (B\cap A^*)$ partitions into those edges with both vertices in $A$ and those with both vertices not in $A$.
Since $A^*\cap B$ does not separate $s,t$,  $f^+(A^*\cap B) = f^-(A^*\cap B)$  and the value of $f^+ -f^-$ on the edges of $\delta (A^*\cap B)$
that are in $A$ is minus the value on the edges not in $A$.    The symmetric difference of $\delta A$ and $\delta B$ consists of the edges
in $\delta (A^*\cap B)$ and it follows that $f^+(A)  -  f^-(A) = f^+(B)  -  f^-(B)$.

\end {proof}
Let $T = T_n$ be the structure tree for $N(X)$.   We have a network $N(T)$ based on $T$ in which each edge has capacity at most $n$.
If $u,v \in VT$ there is a unique maximal $(u,v)$-flow $f(u,v)$ in which $|f(u,v)|$ is the minimal capacity of an edge in the geodesic path $[u,v]$ joining $u, v$.    
An $(s, t)$-flow $f$ in $N$ such that $|f| \leq n$, induces a $(\nu s, \nu t)$-flow  $\bar f$  in $T$  such that  $|f| = |\bar f|$.   In  $T$ an end corresponds to a set of  rays.  For any vertex of $T$ there is a unique ray starting at that vertex and belonging to the end.    If  $u\in VT$ and $v \in  \Omega T$, then there is a  unique ray starting at $u $ and representing $v$.  While if $u,v \in \Omega T$ , then there is a unique two ended path  in $T$ whose ends represent  $u$ and $v$.    In each case we get a $(u,v)$-flow in $T$ by assigning a constant value on the directed edges of the path, provided this constant is less than or equal to the capacity of the each edge in the path.    Every $(u,v)$-flow is of this type.     A maximal $(u,v)$-flow $f(u,v)$ is obtained by taking this constant to be
the minimal capacity of an edge in the path.

\begin {theo} [MFMC]  Let $N$ be a network based on a graph $X$.   Let $s, t \in VX \cup \Omega X$.   The maximum value of an  $(s, t)$-flow is the minimal capacity of a cut separating $s$ and $t$.
\end {theo}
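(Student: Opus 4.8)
The plan is to prove the two inequalities separately, the easy one by weak duality and the hard one through the structure tree. For the easy direction I would argue that every flow value is bounded by every separating capacity: if $f$ is an $(s,t)$-flow and $A$ is a cut with $s\in A,\ t\in A^*$, then by the preceding proposition $|f| = f^+(A)-f^-(A) \le f^+(A) = \sum_{e\in\delta A,\, \iota e\in A} f(e) \le \sum_{e\in\delta A} c(e) = c(A)$, where $f^+(A)$ is finite because $\delta A$ is finite and $f(e)\le c(e)$. Taking the infimum over all cuts separating $s$ and $t$ shows that the maximum flow value is at most the minimal capacity $m$ of such a cut.

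For the reverse inequality I would exhibit a flow of value exactly $m$, and here I would pass to the structure tree $T=T_m$ with edge set $\ce_m$. Since $s$ and $t$ are separated by a cut of capacity $m$, Theorem~\ref{accessible} furnishes a cut $A\in\ce_m$ that is thin with respect to $s,t$, so $c(A)=m$; being a separating cut it is an edge of $T$ lying on the geodesic $[\nu s,\nu t]$. Every other edge $E$ of this geodesic is itself a cut separating $s$ from $t$, whence $c(E)\ge m$, while $E\in\ce_m$ forces $c(E)\le m$; thus every edge of $[\nu s,\nu t]$ has capacity exactly $m$. By the description of flows on trees recorded above, the maximal $(\nu s,\nu t)$-flow $f(\nu s,\nu t)$ in $N(T)$ then has value equal to this minimal edge capacity, namely $m$.

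The substantive step is to lift this tree flow to a genuine $(s,t)$-flow in $X$ of the same value, and I would do this using the decomposition $X=\bigcup_v X_v$ attached to $T$. Writing the geodesic as $v_0=\nu s,\, E_1,\, v_1,\dots,E_k,\, v_k=\nu t$ and reducing to unit capacities, each interior factor $X_{v_i}$ carries the two $\rho$-vertices determined by $E_i$ and $E_{i+1}$, each of degree $m$; since any cut of $X_{v_i}$ separating these two vertices pulls back to a cut of $X$ separating $s$ and $t$, minimality of $m$ shows the least such capacity is exactly $m$. I would then route $m$ edge-disjoint paths across each factor between the relevant $\rho$-vertices and splice them through the $m$ edges of each $\delta E_i$; assigning unit flow along each resulting path produces an $(s,t)$-flow of value $m$, which together with the bound $|f|\le m$ completes the proof.

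The main obstacle is precisely this routing inside the factors, together with the cases where $s$ or $t$ is an end. For a finite factor the required $m$ edge-disjoint paths are supplied directly by the finite Max-Flow Min-Cut Theorem already quoted. For an infinite factor---which by $\ce$-accessibility is one-ended with $\rho$-vertices of finite, hence bounded, degree---and for the terminal factors when $s$ or $t$ is an end, I expect to need a compactness argument: one approximates by finite subnetworks and invokes K\"onig's Lemma (exactly as in Proposition~\ref{tight}) to extract in the limit the $m$ edge-disjoint paths or rays. Handling the passage from finite paths to rays realising an end cleanly is where the care will lie.
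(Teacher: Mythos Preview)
Your route differs from the paper's, and as written the end-case has a genuine gap.

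Two specific problems. First, you write the geodesic as a finite sequence $v_0,E_1,\dots,E_k,v_k$, but when $s$ or $t$ is an end $\nu s$ or $\nu t$ may lie in $\Omega T_m$, so the geodesic can be a ray or a line; your case analysis does not cover this. Second, and more seriously, you justify the K\"onig step by saying that an infinite factor is ``by $\ce$-accessibility one-ended''. But $\ce$-accessibility is not a hypothesis of the theorem, and in any case you are decomposing along $T_m$, not along the full $T(\ce)$: a factor $X_{v_i}$ of $T_m$ may have many ends (precisely those ends of $X$ not separated by any cut of capacity $\le m$). So the structural input your compactness argument relies on is simply not available.

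The paper avoids the factor decomposition entirely. For $s,t\in VX$ it runs the classical augmenting-path argument directly in $X$; since $|f|$ increases by one at each step and is bounded by $m$, either the process reaches value $m$ or it halts with $S_f$ a separating cut of smaller capacity, contradicting minimality. For $s\in VX$ and $t\in\Omega X$, the paper picks a nested sequence of tree edges $D=D_1\supset D_2\supset\cdots$ converging to $t$, collapses $D_i^*$ to a single vertex, and applies the vertex-to-vertex case just proved to obtain a flow of value $m$ up to that collapsed vertex; since $c(D_i)=m$ the flow saturates $\delta D_i$, so the next slab (with $D_i$ collapsed on the other side) glues on without any matching problem. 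Iterating builds the flow all the way to $t$. Two ends are handled by cutting once at a min-cut and applying the vertex-to-end case on each half.

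The point is that the paper never has to produce edge-disjoint rays or analyse the end structure of any auxiliary graph: the flow toward an end is assembled one finite piece at a time by repeated reduction to the already-settled vertex case. Your plan would work for the interior factors (the two $\rho$-vertices have degree exactly $m$, so applying the vertex case inside $X_{v_i}$ gives the $m$ paths, and saturation forces the splice), but at the terminal factor containing an end you would still need precisely this incremental construction---so you may as well use it from the start and dispense with the $X_v$'s.
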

\begin {proof}

Let $n$ be the minimal capacity of a cut in $X$ separating $s, t$.  In the structure tree $T =T_n$  there is a flow from $\nu s$ to $\nu t$ with the property that the value of the flow is $n$.  
     We have to show that each such  flow corresponds to a flow in $X$.   

If $s, t \in VX$, then this follows from the usual proof of the theorem, which we repeat here.
Suppose we have an $(s, t)$-flow $f$ in $N$.   Let $e_1, e_2, \dots e_k$ be a path $p$ joining $s$ and $t$ with the following
property.    Each edge $e_i$ is given an orientation in the flow $f$.   This orientation will not usually be the same as that of going from
$s$ to $t$.    We say that $p$ is an $f$-augmenting path if for each $e_i$ for which $\iota e $ is $s$ or  a vertex of $e_{i-1}$ we have $f(e_i) < c(e_i)$,
and for each edge $e_i$ for which $\iota e = t$ or $\iota e $ is a vertex of $e_{i+1}$ we have $f(e_i) \not= 0$.   
For any flow augmenting path $p$ we get a new flow $f^*$ as follows.   

\begin {itemize} 

\item [(i)]  If $e \in EX$ is not in the path $p$, then $f^*(e) = f(e)$. 
\item [(ii)]  If $e$ is in the path $p$ and $f(e) =0$, then orient 
$e$ so that $\iota e $ is $s$ or  a vertex of $e_{i-1}$, and put  $f^*(e) = 1$.  Recall that we are assuming that $c(e) \not= 0$ for every $e \in EX$ and 
so we have $f^*(e) \leq c(e)$.
\item [(iii)] If $e$ is in the path $p$ and $\iota e $ is $t$ or  a vertex of $e_{i+1}$(so that $\bar e = e_{i+1}$) and  $f(e) \not= 0$ , then $f^*(e) = f(e) -1$.
\item [(iv)] If $e$ is in the path $p$ and $\iota e $ is $s$ or  a vertex of $e_{i-1}$  (so that $e = e_i$), then $f^*(e) = f(e) +1$.

\end{itemize}

The effect of changing $f$ to $f^*$ is to increase the flow along the path $p$.  We have $|f^*| = |f| +1$.

Let $S_f \subset VX$ be the set of vertices  that can be joined to $s$ by a flow augmenting path.  If $t \in S_f$, then we can use the flow augmenting
path joining $s$ and $t$ to get a new flow $f^*$.     We keep repeating this process until we obtain a flow $f$ for which $S_f$ does not contain $t$.
But now $S_f$ is a cut separating $s$ and $t$.   Also if $e \in \delta S_f$, then we have $\iota e \in S_f$ and $f(e) = c(e)$, since otherwise we can
extend the $f$-augmenting path from $s$ to $\iota e$ to an $f$-augmenting path to $\tau e$.    Thus $|f| = c(S_f)$.
But $n$ is the minimal capacity of a cut separating $s$ and $t$ and so $|f| \geq n$.   But also $|f|$ must be less than the capacity of any cut
separating $s$ and $t$ and so $|f| = n$, and  $S_f$ is a minimal cut separating $s$ and $t$.   

If $s \in VX$ and $t \in \Omega X$, then we can build up a flow from $s$ to $t$ in the following way.     Let $D$ be a cut in $ET$ separating $s$ and $t$, so that $s \in D$  and $c(D)\geq n$.  Let $X_D$ be the graph  defined as follows.
The  edge set $EX_D$ consists of all edges $e$ of $X$ which have at least one vertex in $D$, so that either $e \in \delta D$ or $e$ has both vertices
in $D$.   The vertex set $VX_D$ consists of the vertices of $EX_D$, except that we identify all such vertices that are in $D^*$.     Let this vertex be denoted $d^*$.    Thus in $X_D$ the edges incident with $d^*$ are the edges of $\delta D$.      Since $c(D) \geq n$, then as in  the previous case there is a
flow $f_D$ from $s$ to $d^*$ such that $|f_D| = n$.     Let $X_{D^*}$ be the graph defined as for $X_D$, using $D^*$ instead of $D$.   We now have a vertex $d \in VX_{C^*}$ whose incident edges are the edges of $\delta D$.       Now choose another edge $E \not= D$, such that $E^* \subset D^*$, separating $s,t$.
Thus $s \in E$.       Now form a graph $X(D^*, E)$ whose edge set consists of those edges that have at least one vertex in $D^*\cap E$ and whose 
vertex set is the set of vertices of the set of edges except that we identify the vertices that are in $D$ and also identify the vertices that are in $E^*$.
Thus in $X(D^*, E)$ there is a vertex $d$ whose incident edges are those of $\delta D$ and a vertex $e^*$ whose incident edges are those of 
$\delta E$.  The flow $f$ already constructed takes certain values on the edges of $\delta D$.   We can find a $(d, e^*)$ flow which takes these
same values on $\delta D$.    This flow together with the original flow will give an $(s, e^*)$-flow also denoted $f$ such that $|f| = n$.
We can keep on repeating this process and obtain the required $(s, t)$-flow.

 If $s, t$ are both in $\Omega X$,  choose a minimal cut $M$ separating $s$ and $t$, so that $s \in M, t \in M^*$.
Let $X_s$ be the graph  defined as follows.
The  edge set $EX_s$ consists of all edges $e$ of $X$ which have at least one vertex in $M$, so that either $e \in \delta M$ or $e$ has both vertices
in $M$.   The vertex set $VX_s$ consists of the vertices of $EX_s$, except that we identify all such vertices that are in $M^*$.     Let this vertex be denoted $m_t$.    Thus in $X_s$ the edges incident with $m_t$ are the edges of $\delta M$.      If $c(M) =n$, then by the previous case there is a
flow $f_s$ from $s$ to $m_t$ such that $|f_s| = n$.     If we carry out a similar construction for $M^*$ we obtain a flow $f_t$ from $m_s$ to $t$ with
$|f_t| = n$.   We can then piece these flows together to obtain a flow in $X$ from $s$ to $t$ with $|f| = n$.
\end {proof}
The following interesting fact emerges from the above proof in the case when $s, t \in VX$.
If $s,t \in VX$, the cuts in $C \in ET_n$ such that $s \in C, t \in C^*$ form a finite totally ordered set.   It is the geodesic in $ET_n$ joining $\nu s$ and $\nu t$.
Let $D$ be the smallest minimal cut with this property.     Then $S_f \subseteq D$, since for any vertex $u \in D^*$ there can be no $f$-augmenting path
joining $s$ and $u$.    But this must mean that $S_f = D$, since $S_f \in \B _nX$ which is generated by $ET_n$.
Although the maximal flow between $s, t$ is not usually unique, the smallest minimal cut separating $s, t$ is unique.
The way of obtaining $D$ by successively increasing the flow between $s$ and $t$ is obviously not a canonical process, as we choose  flow
augmenting paths  to increase the flow. 

One might think that one could use a structure tree approach to reduce any question about cuts and flows to a one about cuts and flows in a structure tree.
However this is not always possible.   Thus earlier in this section it is shown  that a cut $A$ in $\B _nX$ has a canonical representation in terms of $\ce _n$
and therefore corresponds to a cut $A'$ in $\B _nT$.    However the capacity of $A$ is not usually the same as the capacity of $A'$.   Thus in
Fig \ref {ladder} the cut $A$ corresponding to the brown dashed line has capacity $3$.   This cut is the union of a red cut (with capacity $2$) and a blue cut (with capacity $3$) and in 
$T_3$ corresponds to a cut with capacity $5$.

\section {Almost Invariant Sets}

\subsection {Relative Structure Trees}
We prove Conjecture \ref {KC} in the case when $G$ is finitely generated over $H$, i.e.  $G$ is generated by $H\cup S$ where $S$ is finite.

First, we explain the strategy of the proof.     Suppose that we have a  non-trivial $G$-tree $T$ in which every edge orbit contains an edge  which has an $H$-finite stabiliser, and 
suppose there is a vertex $\bar o$ fixed by $H$.
Let $T_H$ be an   $H$-subtree of  $T$  containing $\bar o$  and every edge with $H$-finite stabiliser.  The action of $H$ on $T_H$ is a trivial action, since 
it has a vertex fixed by $H$, 
and so  the orbit space $H\backslash T_H$ is a tree, which might well be finite, but must have at least one edge.     Our strategy is to show that if
$G$ is finitely generated over $H$ and there is an $H$-almost invariant set $A$ satisfying $AH =A$, then we can find a $G$-tree $T$ with the required properties by first deciding what $H\backslash T_H$ must be and then lifting to get $T_H$ and then $T$.

     We show that if $G$ is finitely generated over $H$, then there is
a $G$-graph $X$ if which there is a vertex with stabiliser $H$ and in which a proper $H$-almost invariant set $A$ satisfying $AH = A$ corresponds to a proper set of vertices with $H$-finite coboundary.     It then follows from Theorem \ref {maintheoremB}, that there is a sequence of structure trees
for $H\backslash X$.      We choose one of these to be $H\backslash T_H$, and show that we can lift this to obtain $T_H$ and then $T$ itself.

For example if $G = {H*}_KL$ then there is a $G$-tree $Y$ with one orbit of edges and  a vertex $\bar o$ fixed by $H$,  and every edge incident with
$\bar o$ has $H$-finite stabiliser.   Suppose that $K, L$ are  such that these are the only edges with $H$-finite stabilisers.   Then $H\backslash T_H$ 
has two vertices and one edge.      When we lift to $T_H$ we obtain an $H$-tree of diameter two in which the middle vertex $\bar o$ has stabiliser
$H$.   The tree $T$ is covered by the translates of $T_H$.   

On the other hand, if $G = L*_KH$ where  $K$ is finite, and $T$ is as above,  then every edge of $T$ is $H$-finite and so $T_H$
is $T$ regarded as an $H$-tree.     The fact that our construction gives a canonical construction for $H\backslash T_H$ means that 
when we lift to $T_H$ and $T$ we will get the unique tree that admits the action of $G$.

We proceed with our proof.
\begin {lemma}  The group $G$ is finitely generated over $H$ if and only if there is a connected $G$-graph $X$ with one orbit of vertices, and finitely
many orbits of edges, and there is a vertex $o$ with stabiliser $H$.
\end {lemma}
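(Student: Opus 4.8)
The plan is to prove the two implications separately: the forward direction by a Schreier-type coset construction, and the reverse by an orbit-counting argument together with a connectivity argument.

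For the \emph{only if} direction, suppose $G = \langle H \cup S\rangle$ with $S$ finite; discarding any elements of $S$ that already lie in $H$, I may assume $S \cap H = \emptyset$. I would take $VX = G/H$, the set of left cosets, with $G$ acting by left multiplication, so that the coset $o = H$ has stabiliser exactly $H$ and $G$ is transitive on $VX$, giving one vertex orbit. For each $s \in S$ join $gH$ to $gsH$ by an edge (these are genuine distinct cosets since $s \notin H$). Every such edge lies in the $G$-orbit of $\{H, sH\}$, so there are at most $|S|$ edge orbits, hence finitely many. The one nontrivial point is connectivity: given $g \in G$, write $g = w_1 \cdots w_k$ with each $w_i \in H \cup S \cup S^{-1}$ and follow the cosets $H, w_1 H, \dots, w_1\cdots w_k H = gH$; consecutive cosets either coincide (when $w_i \in H$) or are joined by one of the chosen edges (when $w_i \in S \cup S^{-1}$), so $gH$ is joined to $o$ by a path.

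For the \emph{if} direction, suppose $X$ is a connected $G$-graph with one vertex orbit, finitely many edge orbits, and $G_o = H$. The key step is to show that the edges incident with $o$ fall into finitely many $H$-orbits. Fix a $G$-orbit $Ge$ of an edge, and using the single vertex orbit write $\iota e = ao$ and $\tau e = bo$. An edge $ge$ is incident with $o$ exactly when $g\iota e = o$ or $g\tau e = o$, that is $g \in Ha^{-1}$ or $g \in Hb^{-1}$; but $\{ge : g \in Ha^{-1}\}$ is precisely the $H$-orbit of $a^{-1}e$, and similarly for $b$. Hence each $G$-orbit of edges meets the star of $o$ in at most two $H$-orbits, and finiteness follows from the finitely-many-edge-orbits hypothesis. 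Choosing one edge from each $H$-orbit at $o$, and for each such edge $e_i = \{o, s_i o\}$ an element $s_i$ realising its other endpoint, yields a finite set $S = \{s_1, \dots, s_k\}$.

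It remains to prove $G = G'$, where $G' = \langle H \cup S\rangle$. I would show $G'o$ is closed under adjacency: if $v = g'o \in G'o$ and $w$ is adjacent to $v$, then translating $\{v,w\}$ by $(g')^{-1}$ gives an edge $\{o, (g')^{-1}w\}$ at $o$, which equals $h e_i = \{o, h s_i o\}$ for some $h \in H$ and some $i$; thus $(g')^{-1}w = h s_i o$ and $w = g' h s_i o \in G'o$ since $g', h, s_i \in G'$. As $X$ is connected and $o \in G'o$, this forces $G'o = VX$, and then for any $g \in G$ we have $go = g'o$ with $g' \in G'$, so $(g')^{-1}g \in G_o = H \subseteq G'$ and $g \in G'$, giving $G' = G$. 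The main obstacle is the finiteness step in the reverse direction, namely passing from finitely many $G$-orbits of edges to finitely many $H$-orbits of edges at $o$; once that is secured, the forward-direction connectivity check and the adjacency-closure argument are routine coset bookkeeping.
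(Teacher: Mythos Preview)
Your proof is correct and follows essentially the same approach as the paper. The forward direction is the identical Schreier-type coset graph, with connectivity argued by tracing a word in $H\cup S\cup S^{-1}$ rather than the paper's equivalent closure argument (taking $G'=\{g:gH\in C\}$ and observing $G'H=G'$, $G's=G'$).

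For the reverse direction the paper simply picks, for each $G$-orbit of edges, a representative $e_i$ with vertices $o$ and $g_io$, and asserts that $H\cup\{g_1,\dots,g_r\}$ generates $G$, leaving the verification to the reader. Your argument supplies exactly this verification via the adjacency-closure step, which is the right idea. Your detour through counting $H$-orbits in the star of $o$ is correct but not actually needed: once you have one representative per $G$-orbit incident at $o$, the same closure argument you wrote (with the two cases $go=g'o$ or $gg_io=g'o$) already goes through with $r$ generators rather than your possibly larger set of up to $2r$. So the ``main obstacle'' you flag is in fact avoidable, though your treatment of it is perfectly valid.
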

\begin {proof}   Suppose $G$ is generated by $H\cup S$, where $S$ is finite.
 Let  $X$ be the graph with $VX = \{ gH | g \in G\}$ and
in which $EX$ is the set of unordered pairs $\{ \{ gH, gsH \},   g \in G, s \in S\} $.    We then have that   $X$ is vertex transitive,  there is a vertex
$o = H$ with stabilizer $H$ and $G\backslash X$ is finite.  We have to show that $X$ is connected.   Let $C$ be the component of $X$ containing 
$o$.     Let $G'$ be the set of those $g \in G$ for which $gH \in C$.   Clearly $G'H = G'$ and $G's = G'$ for every $s \in S$.   Hence $G' = G$ and $C = X$.   Thus $X$ is connected.

Conversely let  $X$ be a connected $G$-graph and $VX = Go$ where $G_o = H$.      Suppose $EX$ has finitely many $G$-orbits,  $Ge_1, Ge_2,
\dots , Ge_r$ where $e_i$  has vertices $o$ and $g_io$.    It is not hard to show that $G$ is generated by $H\cup \{ g_1, g_2, \dots , g_r\} $.

\end{proof}

Let $A \subset G$ be  a proper $H$-almost invariant set satisfying $AH = A$.   Let $G$ be finitely generated over $H$, and let $X$ be a $G$-graph
as in the last lemma.
There is a subset of  $VX$ corresponding to $A$, which is also denoted $A$.      For any $x \in G$,    $A +Ax$ is $H$-finite.   In particular
this is true if $s \in S$.   This means that $\delta A$ is $H$-finite.  Note that neither $A$ nor $A^* = VX - A$ is $H$-finite.
Thus a proper $H$-almost invariant set corresponds to a proper subset of $VX$ such that $\delta A$ is $H$-finite.

 From the previous section (Theorem \ref{cE}) we know that $\B(H\backslash X)$ has a uniquely determined nested set of generators $\ce = \ce (H\backslash X)$.
 For $E \in \ce $, let $\bar E \subset VX$ be the set of all $v \in VX$ such that $Hv \in E$.   Let $C$ be a component of $\bar E$.  
 \begin {lemma}  For $h \in H$,  $hC = C$ or $hC\cap C = \emptyset$.   Also $HC = \bar E$,  $h\delta C \cap \delta C =\delta C$ or $h\delta C \cap \delta C = \emptyset$ and $H\backslash \delta C = \delta E$.   
 \end {lemma}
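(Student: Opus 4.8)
The plan is to regard $\bar E$ as the full vertex-preimage $\pi^{-1}(E)$ under the quotient map $\pi\colon X\to H\backslash X$, and to exploit two facts: that $\bar E$ is $H$-invariant, and that $E$ is connected. First I would record $H$-invariance: if $v\in\bar E$ and $h\in H$, then $H(hv)=Hv\in E$, so $hv\in\bar E$. Since $H$ acts on $X$ by automorphisms, it permutes the connected components of the subgraph induced on $\bar E$; as distinct components are disjoint, for each $h\in H$ we get $hC=C$ or $hC\cap C=\emptyset$, which is the first assertion. The key structural point used throughout is that no edge of $X$ joins two distinct components of $\bar E$ (such an edge would merge them); hence an edge with exactly one endpoint in $C$ has its other endpoint in $(\bar E)^{*}=VX-\bar E$, never in another component. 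Consequently $\delta C\subseteq\delta\bar E$ and $\pi(\delta C)\subseteq\delta E$.

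For $HC=\bar E$ I would show $H$ acts transitively on the components of $\bar E$. Here I use that $E$ is thin, hence tight by the Proposition, hence connected. Given components $C_1,C_2$ and vertices $v_1\in C_1$, $v_2\in C_2$, choose a path in $E$ from $\pi(v_1)$ to $\pi(v_2)$ and lift it one edge at a time starting at $v_1$: each edge of $H\backslash X$ incident to the current image vertex is an $H$-orbit containing an edge incident to the chosen lift, so it lifts to an edge of $X$, and every lifted edge has both endpoints in $\bar E$ and therefore stays inside $C_1$. The terminal vertex of the lift lies over $\pi(v_2)$, so it equals $hv_2$ for some $h\in H$, whence $C_2=h^{-1}C_1$. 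Thus all components lie in a single $H$-orbit and $HC=\bar E$.

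For the third assertion I would argue from the first: if $hC=C$ then $h\delta C=\delta(hC)=\delta C$, while if $hC\cap C=\emptyset$ then any common edge $e\in\delta C\cap h\delta C$ would have its unique endpoint lying in $\bar E$ belong simultaneously to $C$ and to $hC$, which is impossible; so $h\delta C\cap\delta C$ equals $\delta C$ or $\emptyset$. Finally, for $H\backslash\delta C=\delta E$ I would verify that $\pi$ carries $\delta C$ onto $\delta E$: surjectivity follows by lifting an edge of $\delta E$ to an edge with one endpoint in $\bar E$, hence in some component $hC$, and translating back by $h^{-1}$ into $\delta C$; and if $e,e'\in\delta C$ satisfy $\pi(e)=\pi(e')$, then $e'=he$ for some $h\in H$, and comparing endpoints forces $h\iota e\in C$, hence $hC=C$, so $e,e'$ lie in the same orbit of $H_C=\{h\in H:hC=C\}$. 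This identifies $\delta E$ with the orbit space of $\delta C$. The main obstacle is the transitivity step $HC=\bar E$; the remaining parts are formal bookkeeping about components and coboundaries, whereas that step genuinely requires the connectedness of $E$ (from thinness and tightness) together with the path-lifting property of the graph quotient $X\to H\backslash X$.
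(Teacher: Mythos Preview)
Your proof is correct and follows essentially the same approach as the paper's: both arguments use that $H$ permutes the components of $\bar E$ for part one, and both deduce $HC=\bar E$ from the connectedness of $E$ (which you justify via thin $\Rightarrow$ tight). Your path-lifting argument is simply an explicit version of the paper's terse claim that ``$E$ is connected, and so the image \dots\ is $E$,'' and you supply considerably more detail on the coboundary assertions (parts three and four) than the paper does.
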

 \begin {proof} Let $h \in H$.   Then $hC$ is also a component of $\bar E$, since $HC \subseteq E$.
Thus either $hC =C$ or $hC\cap C = \emptyset $.   Let $K$ be the stabilizer of $C$ in $H$.    if $v  \in  C$ then $hv \in  C$ if and only if
$h \in K$. Thus $K\backslash C$ injects into $H\backslash C = E$  and $K \backslash \delta C$ injects into $\delta E$.   But $E$ is connected,
and so the image $HC$ is $E$.  It follows that there is a single $H$-orbit of components.

  \end {proof}
 It follows from the lemma  that it is also   the case that $C^*$ is connected, since any component of $C^*$ must have coboundary 
 that includes an edge from each orbit of $\delta C$.    Let $\bar \ce (H, X)$ be the set of all such $C$, and let $\bar \ce _n(H, X)$  be the subset of $\bar \ce (H, X)$ corresponding to those $C$ for which $\delta C$ lies in at most $n$  $H$-orbits.
 
 \begin {lemma}   the set $\bar \ce (H,X)$ is a nested set.  The set $\bar \ce _n(H, X)$ is the edge set of an $H$-tree.
 \end {lemma}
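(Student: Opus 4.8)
The plan is to realise $\bar\ce_n(H,X)$ as the directed edge set of a tree by verifying the three hypotheses of Theorem~\ref{tree}: that $\bar\ce_n(H,X)$ is closed under complementation, is nested, and satisfies the finite interval condition~(iii). The previous lemma shows that each $h\in H$ permutes the components of a given $\bar E$ and preserves the number of $H$-orbits met by a coboundary, so $\bar\ce_n(H,X)$ is $H$-invariant; hence the tree $T(\bar\ce_n)$ produced by Theorem~\ref{tree} carries an action of $H$, and this is the $H$-tree we want. Throughout I write $\pi\colon X\to H\backslash X$ for the quotient map and use that $\pi^{-1}$ commutes with the Boolean operations, so that $\overline{E\cap F}=\bar E\cap\bar F$ and $\overline{E^{*}}=\bar E^{*}$.

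The main work is nestedness. Let $C,D\in\bar\ce_n(H,X)$ be components of $\bar E$ and $\bar F$ respectively, with $E,F\in\ce=\ce(H\backslash X)$. Since $\ce$ is nested by Theorem~\ref{cE}, $E$ and $F$ are nested, and I would argue according to which corner of $E,F$ is empty. If $E=F$ then $C$ and $D$ are equal or are distinct components of the same set $\bar E$, hence disjoint, so nested. If $E\cap F=\emptyset$ then $C\cap D\subseteq\bar E\cap\bar F=\emptyset$. The decisive input is that each component is connected: if $E\subseteq F$ then $C\subseteq\bar E\subseteq\bar F$, and because $C$ is connected it lies inside a single component of $\bar F$, which is either $D$ (giving $C\subseteq D$) or disjoint from $D$ (giving $C\cap D=\emptyset$); either way $C$ and $D$ are nested, and $F\subseteq E$ is symmetric.

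The one genuinely delicate case, which I expect to be the main obstacle, is the complementary corner $E\cup F=V(H\backslash X)$, equivalently $\bar E^{*}\cap\bar F^{*}=\emptyset$. Here connectedness of $C$ no longer confines $C$ inside $\bar F$, and since $\pi$ does not commute with passing to the complement of a single component, the naive lifting breaks down. The plan is to invoke the second conclusion of the previous lemma, that $C^{*}$ and $D^{*}$ are also connected (so that $C$ and $D$ are tight cuts of $X$): using $\bar E^{*}\subseteq\bar F$ and $\bar F^{*}\subseteq\bar E$ together with the connectedness of $C^{*}$ and $D^{*}$, one aims to show that one of the corners $C^{*}\cap D$, $C\cap D^{*}$ is empty. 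Making this deduction precise, by tracking which component of $\bar F$ the connected set $C^{*}$ meets, is the crux of the whole argument.

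The remaining two conditions should follow from the same circle of ideas. Closure under complementation rests again on connectedness: since $C^{*}$ is connected and $\delta C^{*}=\delta C$ meets exactly the same $H$-orbits, the task is to check that $C^{*}$ is itself one of the components arising from $\overline{E^{*}}$, so that it lies in $\bar\ce_n(H,X)$ at the same level $n$. The finite interval condition~(iii) transfers cleanly from $\ce$: a chain $C\subset C''\subset C'$ in $\bar\ce_n(H,X)$ projects under $\pi$ to a chain $E\subseteq E''\subseteq E'$ in $\ce_n$, which is finite by Theorem~\ref{cE}, and for each intermediate $E''$ the connected set $C$ determines $C''$ as the unique component of $\overline{E''}$ containing it. With (i), (ii) and (iii) established, Theorem~\ref{tree} yields the $H$-tree whose directed edge set is $\bar\ce_n(H,X)$.
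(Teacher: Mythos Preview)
Your overall plan---verify nestedness and the finite interval condition, then invoke the tree theorem---is exactly what the paper does, and your projection argument for chains $C\subset C''\subset C'$ is the paper's first case verbatim: $C\subset C''$ forces $\pi(C)\subseteq\pi(C'')$ and the unique component of $\overline{E''}$ containing $C$ recovers $C''$.

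There is, however, a genuine gap in your treatment of condition~(i). You propose to show that $C^{*}$ is a component of $\overline{E^{*}}$, but this is false whenever $\bar E$ has more than one component: in that case $C^{*}=\overline{E^{*}}\cup\bigcup_{h\notin K}hC$ strictly contains $\overline{E^{*}}$, so $C^{*}$ cannot lie inside $\overline{E^{*}}$ at all, let alone be one of its components. The paper never claims this, and in fact never addresses~(i) explicitly; it simply appeals to the criterion from \cite{[D1]}. This is also why the paper's finite-interval argument does \emph{not} stop at the case $C\subset D$ you handle: it separately treats $C\cap D=\emptyset$ by using the distinguished vertex $o=H$ (which lies in $C^{*}\cap D^{*}$ since $o$ is $H$-fixed and components are $H$-transitive), bounding the edges $E$ with $C\subset E\subset D^{*}$ by those separating $o$ from $C$ or from $D$. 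Your projection argument does not cover this, because $D^{*}$ need not be an element of $\bar\ce_n$ to project.

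On nestedness, your worry about the corner $E^{*}\cap F^{*}=\emptyset$ is legitimate; the paper handles only $HC\subset HD$ and dismisses the rest as ``follows easily.'' The cleanest way through, which neither you nor the paper makes explicit, is again to use the basepoint: since $o$ is $H$-fixed and the components of any $\bar E$ form a single $H$-orbit, whichever of $\bar E,\,\overline{E^{*}}$ contains $o$ is connected. One of $E,F$ contains $\bar o$ (as $E^{*}\cap F^{*}=\emptyset$), so one of $\bar E,\bar F$ is connected; this reduces the analysis to a containment case already handled. Trying instead to argue via connectedness of $C^{*}$ and $D^{*}$ alone, as you sketch, runs into the same obstruction as your approach to~(i): $C^{*}$ need not sit inside $\bar F$ even when $\overline{E^{*}}\subset\bar F$.
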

 \begin {proof}  Let $C, D \in \bar  \ce _n(H,X)$.    Then $HC, HD$ are in the nested set $\ce $.    Suppose $HC \subset HD$,   then $C \subset D$ or
 $C\cap D = \emptyset $.  It follows easily that $\bar \ce (H, X)$ is nested.   It was shown in \cite {[D1]} that a nested set $\ce $ is the directed edge set of
 a tree if and only if it satisfies the finite interval condition, i.e. if $C, D \in \ce $ and $C \subset D$,  then there are only finitely many $E \in \ce $ such that
 $C \subset E \subset D$.
 Thus we have to show that $\bar \ce _n(H, X)$ satisfies the finite interval condition.  If  $C\subset D$ and $C\subseteq E \subseteq D$ where
 $C, E,D \in \bar \ce _n(H,X)$, then $HC \subseteq HE \subseteq HD$.  But $\ce _n(H, X)$ does  satisfy the finite interval condition and 
 $HC = HE$ implies $C = E$.     Now let $C\cap D = \emptyset $ and suppose that $o = H \in C^*\cap D^*$.  There are only finitely
 many $E \in \bar \ce _n$ such that $C \subset E$ and $o \in E^*$ or such that $D \subset E^*$ and $o \in E$.     Each $E \in \bar \ce _n$ 
 such that $C \subset E \subset D^*$ has one of these two properties.
 
 \end {proof}

Let $\bar T = \bar T(H)$ be the tree constructed in the last Lemma.   Let $T  = H\backslash \bar T$.
Note that in the above $\bar T(H)$ is the Bass-Serre   $H$-tree associated with the quotient graph $T(H) = H\backslash \bar T(H)$ and the graph
of groups obtained by associating appropriate labels to the edges and vertices of this quotient graph (which is a tree).    Clearly the action of $H$ on $T(H)$ is a trivial action in that $H$ fixes the vertex $\bar o = \nu o$.   The stabilisers of edges or vertices on a path or ray beginning at $\bar o$ will form a non-increasing sequence of subgroups of $H$.
 
We now adapt the argument of the previous section to show that if $A \subset VX$ is such that $\delta A$ lies in at most $n$  $H$-orbits,  then there is a canonical way
of expressing $A$ in terms of the set $\bar \ce  (H, X)$.    In this case we have to allow unions of infinitely many elements  of the generating set.
Our induction hypothesis is that if $\delta A$ lies in at most $n$ $H$-orbits, then $A$ is canonically  expressed in terms of $\bar \ce _n (H, X)$.
First note that there are only finitely many $H$-orbits of elements of $\bar \ce _n = \bar \ce _n(H,X)$ with which $A$ is not nested.  This is because 
if $C \in \bar \ce _n$ is not nested with $A$ and $F$ is a finite connected subgraph of $H\backslash X$ containing all the edges of $H\delta A$,
then $H\delta C$ must contain an edge of $F$ and there are only finitely many elements of $\ce _n$ with this property.
We now let $\mu (A)$ be the number of $H$-orbits of elements of $\bar \ce _n$ with which $A$ is not nested.
If $\mu (A) = 0$, then $A$ is nested with every $C \in \bar \ce _n$.   This then means that if neither $A$ nor $A^*$ is empty and it is not
already in $\bar\ce _n$, then $A$ determines a vertex $z$  of $\bar T_n$  and either $A$ or $A^*$ is the union (possibly infinite) of edges of
$T_n$ that lie in finitely many $H$-orbits.  
 If $A$ is such a union, then
we use this union for our canonical expression for $A$.   If $A$ is not such a union, then $A^*$ is;   we have $A^* = \bigcup \{ C_{\lambda} | \lambda \in \Lambda \}$, where each $C_{\lambda }$ has $\tau C_{\lambda } = z$ and the edges lie in finitely many $H$-orbits.   We write $A = ( \bigcup \{ C_{\lambda} | \lambda \in \Lambda \})^*  = \bigcap    \{ C_{\lambda}^* | \lambda \in \Lambda \}$.         Note that this gives a canonical way of expressing cuts corresponding to a vertex 
that is not in the image of $\nu $ and whose incident edges lie in finitely many $H$-orbits.        Suppose then that the hypothesis is true for elements $B$ for which $\mu (B) < \mu (A)$.
Let $C \in \bar  \ce _n$ be not nested with $A$.   Then $ \mu (A\cap HC) +\mu (A\cap HC^*) \leq  \mu (A)$.      Thus each of $A\cap HC$ and $A\cap HC^*$
can be expressed in a unique way in terms of the $\ce _n$.   We take the expression for 
$A$ to be the union of the two expressions for $A\cap HC$ and $A\cap HC^*$ except that we include  $hC$  for $h \in H$, only if just one of the two expressions involve $hC$.

If $g \in G$, then $g\bar T(H)$ is a $(gHg^{-1})$-tree.  It is the tree $\bar T(gHg^{-1})$ obtained from the $G$-graph $X$ by using the vertex $go$
instead of $o$.    We now show that there is a $G$-tree $T$ which contains all of the trees $g\bar T(H)$.

We know that the action of the group $G$ on $X$  is vertex transitive and that $X$ has a vertex $o$ fixed by $H$.  Also $G$ is generated by
$H\cup S$ where $S$ is finite.

Clearly there is an isomorphism  $\alpha _g : \bar T(H) \rightarrow \bar T(gHg^{-1})$ in which $D \mapsto gD$.

Suppose now that $\nu o \not= \nu (go)$. 
Let  $A, B $ be $H$-almost invariant sets satisfying $AH = A, BH = B$ and let $g \in G$.  We regard $A,B$ as subsets of $VX$, so that 
$\delta A$ and $\delta B$ are $H$-finite.

Suppose that $o \in gB^*$ and $go\in A^*$. 
 The following Lemma is due to Kropholler \cite {[K90]}, \cite {[K91]}.
 We put $K = gHg^{-1}$.
\begin {lemma}\label {Krop}  In this situation $\delta (A\cap  gB)$is  $(H\cap K)$-finite.
\end {lemma}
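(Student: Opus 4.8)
The plan is to prove that $\delta(A\cap gB)$ lies in only finitely many $(H\cap K)$-orbits by showing \emph{separately} that it is $H$-finite and that it is $K$-finite, and then invoking the elementary principle that a set which is both $H$-finite and $K$-finite is automatically $(H\cap K)$-finite. This principle holds because a set meeting only the cosets $Hf_1,\dots,Hf_m$ and only the cosets $Kf_1',\dots,Kf_n'$ is contained in the union of the intersections $Hf_i\cap Kf_j'$, and whenever such an intersection is non-empty, choosing a point $x$ in it gives $Hf_i=Hx$ and $Kf_j'=Kx$, so $Hf_i\cap Kf_j'=Hx\cap Kx=(H\cap K)x$ is a single $(H\cap K)$-coset. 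To keep this bookkeeping clean I would apply the principle to the symmetric differences $(A\cap gB)+(A\cap gB)s$, $s\in S$, which are subsets of $G$; the coboundary $\delta(A\cap gB)$ is $(H\cap K)$-finite precisely when each of these is.

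The starting point is the containment $\delta(A\cap gB)\subseteq \delta A\cup\delta(gB)$, together with the facts that $\delta A$ is $H$-finite (since $A$ is $H$-almost invariant) and that $\delta(gB)=g\,\delta B$ is $K$-finite with $K=gHg^{-1}$ (since $\delta B$ is $H$-finite). I would split $\delta(A\cap gB)$ by which endpoint lies in $A\cap gB$: each such edge either lies in $\delta A$ with its $A$-endpoint in $gB$, or lies in $\delta(gB)$ with its $gB$-endpoint in $A$, or lies in both. The edges lying in both belong to $\delta A\cap\delta(gB)$, hence are $H$-finite and $K$-finite and so already $(H\cap K)$-finite. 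Consequently $H$-finiteness of $\delta(A\cap gB)$ reduces to showing that the edges of $\delta(gB)$ having an endpoint in $A$ form an $H$-finite set, while $K$-finiteness reduces to the mirror statement that the edges of $\delta A$ having an endpoint in $gB$ form a $K$-finite set.

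These two ``mixed corner'' statements are where the hypotheses $go\in A^*$ and $o\in gB^*$ enter, and I expect this to be the main obstacle. Their role is that the $K$-fixed vertex $go$ sits on the $A^*$ side of $A$, and the $H$-fixed vertex $o$ sits on the $(gB)^*$ side of $gB$; this placement is exactly what prevents a whole orbit of boundary edges from sinking entirely into $A$ (respectively into $gB$), which would destroy finiteness. For the $H$-finiteness statement I would translate by $g^{-1}$, turning $\delta(gB)$ into the $M$-finite set $\delta B$, where $M=g^{-1}Hg$ fixes the vertex $g^{-1}o\in B^*$, and then argue orbit by orbit of $\delta B$ under $M$ that, because $B$ is $H$-almost invariant and the fixed vertex $g^{-1}o$ lies in $B^*$, only finitely many translates in each orbit can have an endpoint in $g^{-1}A$ (which does not contain $o$, as $go\in A^*$). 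This confines the relevant edges to finitely many $(H\cap M)$-classes; since $(H\cap M)$-finite implies $M$-finite, translating back by $g$ yields the desired $H$-finiteness. The $K$-finiteness statement is identical with the roles of $A,H,o$ and $gB,K,go$ interchanged. Combining the two shows $\delta(A\cap gB)$ is simultaneously $H$-finite and $K$-finite, hence $(H\cap K)$-finite. The delicate point, and the one I would treat most carefully, is precisely the single-orbit count: ruling out an infinite family of boundary edges, pairwise inequivalent under $H\cap K$, all meeting $A$, which is exactly what the base-point hypotheses $go\in A^*$ and $o\in gB^*$ are designed to exclude.
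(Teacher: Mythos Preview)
Your overall plan—prove the symmetric differences $(A\cap gB)+(A\cap gB)s$ are simultaneously $H$-finite and $K$-finite, then use the coset-intersection principle to deduce $(H\cap K)$-finiteness—is exactly the paper's strategy. The paper implements it via the Boolean identity
\[
(A\cap gB)x + (A\cap gB) \;=\; Ax\cap(gBx+gB) \;+\; (Ax+A)\cap gB,
\]
which is the algebraic form of your splitting $\delta(A\cap gB)\subseteq\delta A\cup\delta(gB)$, and then shows each summand is both $H$-finite and $K$-finite.

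Where your write-up goes astray is in the bookkeeping of the mixed-corner step. After translating by $g^{-1}$, the set $\delta B$ is $H$-finite, not $M$-finite for $M=g^{-1}Hg$; there is no reason for $\delta B$ to lie in finitely many $M$-orbits, so ``argue orbit by orbit of $\delta B$ under $M$'' is not available. What must actually be shown after translation is that the edges of $\delta B$ meeting $g^{-1}A$ lie in finitely many $M$-orbits, and the hypotheses that deliver this are not ``$B$ is $H$-almost invariant and $g^{-1}o\in B^*$'' but the other pair: $A$ is $H$-almost invariant and $go\in A^*$. Since $AH=A$, the condition $go\in A^*$ gives $gH\subseteq A^*$; then any element of $A\cap gHf$ has the form $ghf$ with $gh\in Af^{-1}\cap A^*\subseteq A+Af^{-1}$, which is $H$-finite, and this forces the desired finiteness. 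This is exactly the paper's treatment of the summand $Ax\cap gHF$. The second summand $(Ax+A)\cap gB$ uses the symmetric pair of hypotheses, namely $B$ $H$-almost invariant and $o\in gB^*$. You have correctly identified all four ingredients but attached each basepoint hypothesis to the wrong half of the argument; once they are swapped into place your proof becomes the paper's.
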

\begin {proof}   Let $x \in G$.  We show that the symmetric difference $(A\cap gB)x + (A\cap gB)$ is $(H\cap K)$-finite.
Since $A, B$ are $H$-almost invariant, there are finite sets $E, F$ such that $A+ Ax \subseteq HE$ and $B+Bx\subseteq HF$.
We then have

  $$ 
   (A\cap gB)x + (A\cap gB) = Ax\cap (gBx +gB) + (Ax + A)\cap gB 
= Ax\cap gHF + g(g^{-1}HE\cap B). 
  $$
  
  Now $Ax\cap gHF$ is $K$-finite, but it is also $H$-finite because $gH$ is contained in $A^*$, since $go \in A^*$. A set which is both $H$-finite and $K$-finite is $H\cap K$-finite.  Thus $Ax\cap gHF$ is $(H\cap K)$-finite.  Similarly using the fact that  $g^{-1}o \in B^*$, it follows that $g^{-1}HE\cap B$ is $H\cap (g^{-1}Hg)$-finite, and so $g(g^{-1}HE\cap B)$ is $(H\cap K)$-finite.  Thus $A\cap gB$ is $(H\cap K)$-almost invariant.  
  But this means that $\delta (A\cap gB)$ is $(H\cap K)$-finite.
  \end {proof}  
  What this Lemma says is that if $A, gB$ are not nested then there is a special corner - sometimes called the {\it Kropholler  corner} - which is $(H\cap K)$-almost invariant.    
    
 Notice that in the above situation all of $\delta A, \delta (A\cap gB^*)$ and $\delta  (A\cap gB)$ are $H$-finite.
 If we take the canonical decomposition for $A$, then it can be obtained from the canonical decompositions for $A\cap gB$ and $A\cap gB^*$
 by taking their union and deleting any edge that lies in both.   Also $\delta (gB)$ is $K$-finite and the decomposition for
 $gB$ can be obtained from those for $gB\cap A$ and $gB\cap A^*$.     But the edges in the decomposition for $A\cap gB$ which
 is $(H\cap K)$-almost invariant are the same in both decompositions.

  We will now show that it follows from Lemma \ref {Krop} that the set $G\bar \ce _n$ is a nested $G$-set which satisfies the final interval condition,
  and so it is the edge set of a $G$-tree.
  We have seen that $\bar \ce _n$ is a nested $H$-set where  $\ce _n = H\backslash \bar \ce _n$ is the uniquely determined  nested subset of $\B _n(H\backslash X)$ that generates
  $\B _n (H\backslash X)$ as an abelian group. It is the edge set of a tree $T _n(H\backslash X)$.
  
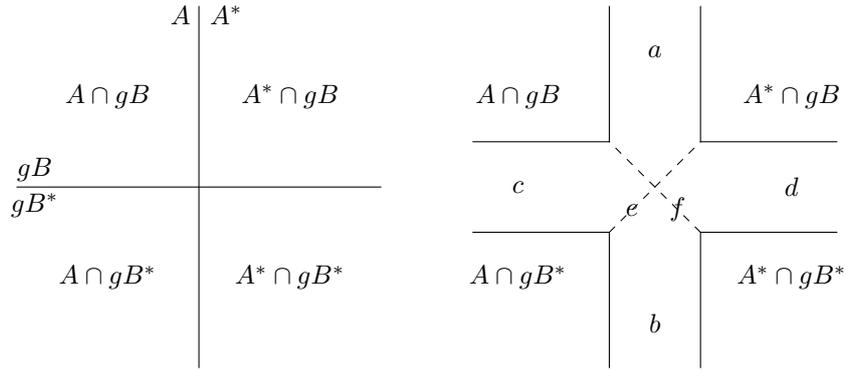
\begin{figure}[ht!]

\centering
\begin{tikzpicture}[scale=.6]

    \draw (0,4)--(8,4);
    \draw (4,0)--(4,8);

\draw (3.6, 7.8) node  {$A$};
\draw (4.6,7.8) node  {$A^*$};
\draw (.4, 4.4) node  {$gB$} ;
\draw (.4,3.6) node  {$gB^*$} ;
\draw (6,6) node  {$A^*\cap gB$};
\draw (2,6) node  {$A\cap gB$};
\draw (2,2) node  {$A\cap gB^*$};
\draw (6,2) node  {$A^*\cap gB^*$};

    \draw (10,5)--(13,5)-- (13, 8);
       \draw (10,3)--(13,3)-- (13, 0);
\draw (18,5)--(15,5)-- (15, 8);
       \draw (18,3)--(15,3)-- (15, 0);

\draw [dashed] (13,3)--(15,5) ;
\draw [dashed] (15,3)--(13,5) ;

\draw (17,6) node  {$A^*\cap gB$};
\draw (11,6) node  {$A\cap gB$};
\draw (11,2) node  {$A\cap gB^*$};
\draw (17,2) node  {$A^*\cap gB^*$};

\draw (14,7) node  {$a$};
\draw (14,1) node  {$b$};
\draw (11, 4) node  {$c$};
\draw (17, 4) node  {$d$};
\draw (13.5, 3.5) node  {$e$};
\draw (14.5, 3.5) node  {$f$};

  \end{tikzpicture}

\vskip .5cm \caption{Crossing cuts}\label{CutsA}
\end{figure}

  If $A, B \in  \bar \ce _n$ and $A, gB$ are not nested for some $g \in G$, then by Lemma \ref {Krop} there is a corner -the Kropholler corner -, which we take to be $A\cap gB$, for which $\delta (A\cap gB)$ is $(H\cap K)$-finite.    We than have canonical decompositions for $A\cap gB$ and $A\cap gB^*$ as above.
  This is illustrated in Fig \ref {CutsA}.     The labels  $a, b, c, d, e, f$ are for sets of edges joining the indicated corners.  In this case the letters do not represent edges of $X$ but elements of $\bar \ce _n$.    Although each $E \in \bar \ce _n$ comes with a natural direction, in the diagram we only count 
  the unoriented edges,   i.e. we count the number of edge pairs $(E, E^*)$.    In the diagram,  $A\cap gB$ is always taken to be the Kropholler corner.
  Thus we have that any pair contributing to $a, f$ or $e$ must be $(H\cap K)$-finite.   Any pair contributing to $e$ or $b$ must be $H$-finite and any
  pair contributing to $e$ or $d$ must be $K$-finite.
  
  We have that $a + e + f +b = 1$ and $c + e + f + d =1$.     Suppose that the Kropholler corner $A\cap B$ is not empty. It is the case that  each of $o$
  and $go$ lies in one of the other three corners.     We know that $o \in gB^*, go \in A^*$.    If $o \in A\cap gB^*$ and $go \in A^*\cap gB$, then 
  $a = c = 1$ and $e =f = b = d =0$ and $A^*\cap gB^* = \emptyset$.      If $o \in A^*\cap gB$ and $go \in A^*\cap gB^*$, then $a = d =1$ and 
  $A\cap gB^* = \emptyset$,  while if both $o$ and $go$ are in $A^*\cap gB^*$,  then either $a =d =1$  and $A\cap gB^* = \emptyset$ or
   $a =c =1$  and $A^*\cap gB = \emptyset$ or  $f =1$ and both
  $A\cap gB^*$ and $A^*\cap gB$ are empty, so that  $A = gB$.  In all cases $A, gB$ are nested.

\begin{figure}[htbp]
\centering
\begin{tikzpicture}[scale=.5]
\draw [red]  (-.8, 1) -- (-.5,2) -- (-.5,3)-- (-1, 4) -- (-1, 5) -- (-1.4, 4) -- (-1. 6, 3)  ;
\draw [thick, dashed, red]   (-2, 2) --(-1.6,3) -- (-1.4, 2) ;
\draw [thick, brown]   (-.5,0) -- ( -.3, 1) --(-.5, 2)-- (1,2)--(2,1)--(3,2)--(4,1)--(5,2)--(6,1)--(7,2)--(8,1)--(9,2)--(10.5,2)  ;
\draw (-1,5) node {$\bullet$} ;
\draw (10.5,5) node {$\bullet$} ;
\draw [thick, blue]  (10.5,2) --(10.5, 3) --(11,4)--(10.5, 5) -- (11.5, 4) --( 11.5, 3)  ;
\draw [thick, brown] (3,-1) --( 2.5, 0) -- (3, 2) -- (3.5, 0) ;
\draw [thick, brown] (4.5,-1) --( 4.5, 0) -- (5, 2) -- (5.5, 0) ;

\draw (-1,5) node [above] {$o$} ;
\draw (10.5,5) node [above] {$go$} ;
\end{tikzpicture}
\end{figure}

 We need also to show that $G\bar \ce _n$ satisfies the finite interval condition.    Let $g \in G$ and let $K = gHg^{-1}$.  Consider the union
 $\bar \ce \cup  g\bar \ce $.     This will be a nested set.    In fact it will be the edge set of a tree that is the union of the trees $T(H)$ and $T(K)$.
 In the diagram the red edges are the edges that are just in $T(H)$.   The blue edges are the ones that are in $T(K)$.  The brown edges are in
 both $T(H)$ and $T(K)$.   An edge is in the geodesic joining $o$ and $go$ if and only if it has stabiliser containing $H\cap K$, it will also
 lie in both $T(H)$ and $T(K)$ (i.e.  it is coloured brown) if and only if it its stabiliser contains $H\cap K$ as a subgroup of finite index.
 It may be the case that $T(H)$ and $T(K)$ have no edges in common, i.e. there are no brown edges.    
 An edge lies in both trees if and only if it has a stabiliser that is $(H\cap K)$-finite.   It there are such edges then they will be the edge set of
 a subtree of both trees. They will correspond to the edge set  $\bar \ce (H\cap K)$.

It  follows that $T(H)$ is always a subtree of a  tree constructed from a subset of $G\bar \ce _n$ that contains $\bar \ce _n$.
If $T(H)$ and $T(K)$ do have an edge in common, then $T(H)\cup T(K)$ will be a subtree of the tree we are constructing.   
If $e \in EX$ has vertices $go$ and $ko$ and there is some $C \in G\bar \ce _n$ that has $e \in \delta C$, then $C \in gET(g^{-1}Hg)\cap kET(k^{-1}Hk)$.   If there is no such $C$,  i.e. there is no cut $C \in G\bar \ce _n$ that separates $o$ and $k^{-1}go$ then $T(H) = k^{-1}gT(H)$.
As there is a finite path connecting any two vertices $u,v$  in $X$, it can be seen that there are only finitely many edges in $G\bar \ce _n$ separating
$u$ and $v$ since any such edge must separate the vertices of one of the edges in the path.   Thus $G\ce _n$ is the edge set of a tree.


  We say that a $G$-tree $T$ is reduced if for every $e \in ET$, with vertices $\iota e$ and $\tau e$  we have that either  $\iota e$  and $\tau e$ are in the same orbit, or $G_e$ is a proper subgroup of both $G _{\iota e} $ and $G _{\tau e} $.
  
   \begin {theo}\label {tree}
   Let $G$ be a group that is finitely generated over a subgroup $H$.
   The following are equivalent:-
    \begin {itemize}
    \item [(i)]
  There is a proper $H$-almost invariant set $A = HAK$ with left stabiliser $H$ and right stabiliser $K$, such that $A$ and $gA$ are nested for every $g \in G$.
  \item [(ii)]  There is a reduced $G$-tree $T$ with vertex $v$ and incident edge $e$ such that $G_v = K$ and $G_e = H$.   
  \end {itemize}
  \end {theo}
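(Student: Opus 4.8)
The plan is to prove the two implications separately. The direction (ii)$\,\Rightarrow\,$(i) is essentially a repackaging of the computation carried out at the start of this section, while (i)$\,\Rightarrow\,$(ii) is the substantial direction, where the set of all translates of $A$ is shown to be the directed edge set of the required tree.

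For (ii)$\,\Rightarrow\,$(i) I would orient $e$ so that $v=\iota e$ and set $A=G[e,\iota e]$. The discussion preceding Conjecture~\ref{KC} already shows that $A=HAK$, that $A$ is $H$-almost invariant, and (the action being non-trivial) that $A$ is proper. The left stabiliser of $A$ is $\{g\mid gA=G[ge,\iota(ge)]=G[e,\iota e]\}=G_e=H$, and, using $G[e,\iota e]h=G[e,h^{-1}\iota e]$, the right stabiliser is $\{g\mid G[e,g^{-1}\iota e]=G[e,\iota e]\}=G_{\iota e}=K$. Finally, for any $g$ the sets $A=G[e,\iota e]$ and $gA=G[ge,\iota(ge)]$ are the two sides determined by the oriented edges $e$ and $ge$, and any two oriented edges of a tree have nested sides, so $A$ and $gA$ are nested. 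Properness uses that a reduced $G$-tree with an edge gives a non-trivial action, whence neither $A$ nor $A^*$ is $H$-finite.

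For (i)$\,\Rightarrow\,$(ii) I would put $\ce=\{gA,\,gA^*\mid g\in G\}$, a $G$-set under left multiplication, closed under complementation by construction. It is nested, since $g_1A$ and $g_2A$ are nested iff $A$ and $g_1^{-1}g_2A$ are nested, which holds by hypothesis, and nesting with the complements follows. Once the finite interval condition (iii) is established for $\ce$, the tree theorem (a complementation-closed nested family satisfying (iii) is the directed edge set of a tree) produces a $G$-tree $T=T(\ce)$ with $ET=\ce$. The oriented edge $A$ then has stabiliser $G_A=\{g\mid gA=A\}=H$. Reversing the stabiliser computation of the easy direction — i.e.\ matching the basepoint $o=H$ so that $A$ is recovered as $G[A,v]$ for the appropriate endpoint $v$ of the edge $A$ — identifies that endpoint's stabiliser with the right stabiliser $K$ of the set $A$. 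Since $A$ is proper, $H$ is a proper subgroup of both vertex stabilisers, so the resulting one-edge-orbit tree is non-trivial and reduced, giving a reduced $G$-tree with $G_v=K$ and $G_e=H$.

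The main obstacle is the finite interval condition for $\ce$. Given $B\subsetneq C$ in $\ce$, the members $D\in\ce$ with $B\subseteq D\subseteq C$ are pairwise nested and all contain a fixed vertex $p\in B$ while omitting a fixed vertex $q\in C^{*}$, so they form a chain; choosing a finite $p$--$q$ path in the graph $X$ of the opening lemma, each such $D$ has a coboundary edge on this path, so it suffices to bound, for a fixed edge $f$ of $X$, the number of translates $gA$ with $f\in\delta(gA)$. This is exactly the finiteness already exploited to prove that $G\bar\ce_n$ is the edge set of a $G$-tree: all such translates separate the two endpoints of $f$, and by Kropholler's Lemma~\ref{Krop} the comparison of any two of them takes place inside a single conjugate of the $H$-tree $\bar T(H)$, where the finite interval condition of $\bar T(H)$ bounds their number. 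I expect the delicate part to be this bookkeeping — tracking which conjugate $gHg^{-1}$ controls the coboundary of each translate and verifying that the Kropholler corner makes the count collapse to a finite geodesic in $\bar T(H)$ — while the passage to the tree and the stabiliser identification are then formal.
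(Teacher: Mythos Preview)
Your treatment of (ii)$\Rightarrow$(i) matches the paper's. For (i)$\Rightarrow$(ii) your setup --- take $\ce=\{gA,gA^*\mid g\in G\}$ and invoke the tree theorem --- is also the paper's, but you make the finite interval condition much harder than necessary, and the route you sketch through Kropholler's Lemma and $\bar T(H)$ is not clearly valid.

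You correctly reduce the finite interval condition to bounding, for a fixed edge $f$ of $X$ (equivalently, for two fixed elements $p,q\in G$), the number of translates $gA$ with $f\in\delta(gA)$. But this bound is immediate from $H$-almost invariance alone: if $f$ joins $pH$ and $qH$ then $f\in\delta(gA)$ forces $g^{-1}p\in A+A(q^{-1}p)$, a set lying in finitely many cosets $Hc_1,\dots,Hc_k$; since $HA=A$ the translate $gA$ depends only on $gH$, and $g\in pc_i^{-1}H$ gives at most $k$ possibilities. The paper says this even more directly: the edges of $\ce$ separating the vertices $A$ and $Ax$ are exactly $g_1^{-1}A,\dots,g_k^{-1}A$, where $A+Ax=Hg_1\cup\cdots\cup Hg_k$, and this list is finite because $A$ is $H$-almost invariant. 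That is the whole argument.

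Your appeal to $\bar T(H)$ is misplaced: that tree is built from the optimally nested generators of $\B_n(H\backslash X)$, not from the translates of $A$, and there is no reason a translate $gA$ should appear as an edge of any conjugate of $\bar T(H)$. Kropholler's Lemma was used earlier to \emph{establish} nesting of $G\bar\ce_n$; here nesting is already a hypothesis, so the lemma has no role. Replace the last paragraph of your proposal with the two-line count above and the proof is complete.
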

  \begin {proof}
  It is shown that (ii) implies (i) in the Introduction.
  
  Suppose than that we have (i).     We will show that there is a $G$-tree - in which $G$ acts on the right - which contains the set 
  $V =  \{ Ax | x \in G\}$ as a subset of the vertex set.        Let  $x \in G$,   then $A + Ax$ is a union of finitely many cosets $Hg_1, Hg_2, \dots , Hg _k$.
  Then $\{ g_1 ^{-1}A, g_2^{-1}A, \dots , g_k^{-1}A  \}$ is the edge set of a finite tree  $F$.
  We know that the set $\{ gA | g \in G\}$ is the edge set of a $G$-tree $T$ provided we can show that it satisfies the finite interval condition.
 But  this must be the case as the edges separating vertices $A$ and $Ax$ will be the edges of  $F$.  
    
  \end {proof}
    \begin {theo} Let $G$ be a  group and let $H$ be a subgroup, and suppose $G$ is finitely generated over $H$.   There is a proper $H$-almost invariant subset $A$
  such that $A = AH$, if and only if  there is a non-trivial  reduced $G$-tree $T$ in which $H$ fixes a vertex and every edge orbit contains an edge  with an $H$-finite edge stabilizer.
  \end {theo}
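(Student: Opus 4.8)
The plan is to prove both implications, treating the harder direction---that a proper $H$-almost invariant set $A$ with $AH=A$ yields the tree---first, since the bulk of its construction has already been carried out above. By the lemma characterising finite generation over $H$, I would realise $G$ as acting vertex-transitively on a connected graph $X$ with $VX=\{gH\}$, the vertex $o=H$ having stabiliser $H$ and $EX$ having finitely many orbits, and regard $A$ as a proper subset of $VX$ with $\delta A$ being $H$-finite. Since $A=AH$, it descends to a proper cut $\bar A$ in $H\backslash X$ of some finite capacity $n$. Theorem \ref{cE} and Theorem \ref{maintheoremB} then furnish the canonical nested generating set $\ce_n=\ce_n(H\backslash X)$ of $\B_n(H\backslash X)$; lifting its elements to components of their preimages produces the $H$-invariant nested set $\bar\ce_n=\bar\ce_n(H,X)$, which by the lemmas above is the edge set of the $H$-tree $\bar T(H)$ in which $H$ fixes $\bar o=\nu o$. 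Finally, Lemma \ref{Krop} (the Kropholler corner) is exactly what shows that $G\bar\ce_n$ is again nested and satisfies the finite interval condition, so that $T=T(G\bar\ce_n)$ is a $G$-tree; this step, already completed in the preceding paragraphs, is the genuine heart of the matter.

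It then remains to read off the stated properties of $T$. That $H$ fixes a vertex is immediate: $H$ fixes $o$, hence fixes $\bar o=\nu o$. Every edge of $T$ is a $G$-translate of some $C\in\bar\ce_n$, so every edge orbit has a representative $C$ with $\delta C$ being $H$-finite; taking $C$ incident with $\bar o$, the oriented-edge stabiliser $G_C$ fixes $\bar o$ and permutes the $H$-finite set $\delta C$, and I would argue from this that $G_C$ is $H$-finite, which is the one delicate point in this direction. Non-triviality follows from properness of $A$ exactly as in the proof of Stallings' Theorem given above: some $C\in\bar\ce_n$ separating the two sides of $\bar A$ has both $C$ and $C^*$ containing infinitely many vertices modulo $H$, so by vertex-transitivity one may translate the $H$-finite boundary $\delta C$ strictly inside $C$, producing $g\in G$ that shifts $C$; hence $G$ fixes no vertex of $T$. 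Collapsing the orbits of any edge $e$ with $G_e=G_{\iota e}$ makes $T$ reduced without disturbing non-triviality, the $H$-fixed vertex, or the $H$-finite edge stabilisers.

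For the converse I would use the construction of the Introduction with the roles adjusted. Given a non-trivial reduced $G$-tree $T$ with $H$ fixing $\bar o$ and an edge $e$ whose stabiliser $L=G_e$ is $H$-finite, set $A=\{g\in G:\ g\bar o\in T_e^{+}\}$, where $T_e^{+}$ is the component of $T-e$ on the far side of $e$ from $\bar o$. Since $h\bar o=\bar o$ for $h\in H$, we have $A=AH$ at once. The symmetric difference $A+Ax$ consists of those $g$ for which $g^{-1}e$ lies on the finite geodesic from $\bar o$ to $x^{-1}\bar o$; as this geodesic meets the orbit of $e$ in finitely many edges, each contributing a single coset $Lc^{-1}$, and as $L$ is $H$-finite, $A+Ax$ is $H$-finite, so $A$ is $H$-almost invariant.

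Properness is where care is needed, and will again come from non-triviality: if the action has a hyperbolic element $g$, then the vertices $g^{n}\bar o$ march to an end of its axis, lying eventually in $T_e^{+}$, and since $H$ fixes $\bar o$ the vertices $g^{n}\bar o$ occupy infinitely many distinct $H$-orbits (they lie at distinct distances from $\bar o$), so $A$ is not $H$-finite; the symmetric argument applied to $g^{-n}$ shows $A^{*}$ is not $H$-finite either, and the fixed-end case is handled similarly using an exhausting sequence of vertex stabilisers. Thus $A$ is a proper $H$-almost invariant set with $AH=A$. I expect the main obstacle, beyond the already-established nestedness of $G\bar\ce_n$ via the Kropholler corner, to be the precise matching of ``$A$ proper'' with ``$T$ non-trivial'' in both directions, together with verifying that the $G$-stabiliser of an edge $C\in\bar\ce_n$ incident with $\bar o$ is genuinely $H$-finite rather than merely having $H$-finite boundary.
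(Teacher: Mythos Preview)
Your overall strategy matches the paper's: build the $G$-tree $T=T(G\bar\ce_n)$ from the machinery already developed, then verify the listed properties, and handle the converse via the construction in the Introduction. Two points deserve comment.

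\textbf{Non-triviality.} Your Stallings-style argument does not transfer as written. In the classical case one translates a \emph{finite} boundary $\delta D$ into $D$ using vertex-transitivity; here $\delta C$ is only $H$-finite and may well be an infinite set of edges, so ``translate $\delta C$ strictly inside $C$'' is not justified by vertex-transitivity alone (the translate $g\cdot V(\delta C)$ is a $gHg^{-1}$-orbit union, and there is no reason such a set sits inside $C$). The paper avoids this entirely: since $A$ has a canonical expression in terms of $\bar\ce_n$, if $G$ fixed $\bar o$ then $\nu(VX)=\{\bar o\}$, forcing $\bar\ce_n=\emptyset$ and hence $A$ improper. More generally, any edge of $T$ separates two vertices of $X$, so $\nu(VX)$ is a $G$-orbit with at least two elements, and $G$ cannot fix a vertex. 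This is both shorter and dodges the $H$-finite-versus-finite obstacle.

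\textbf{$H$-finite edge stabilisers.} You flag this as delicate, and your sketch (``$G_C$ fixes $\bar o$ and permutes the $H$-finite set $\delta C$'') does not by itself yield $[G_C:H\cap G_C]<\infty$: permuting a set with finitely many $H_C$-orbits gives no bound on $[G_C:H_C]$ without further input. The paper's route is different and uses what was established just before the theorem: for $g\in G_e$ one has $e=ge\in g\bar T(H)=\bar T(gHg^{-1})$, hence $e\in\bar T(H)\cap\bar T(gHg^{-1})=\bar T(H\cap gHg^{-1})$, and membership in $\bar T(L)$ is precisely the condition that $G_e$ be $L$-finite. This is the intended mechanism, and it is worth internalising rather than replacing.

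The converse direction you outline is correct and more detailed than the paper, which simply refers back to the Introduction (and to the preceding theorem, noting that one in fact gets $HAH=A$).
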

  
   \begin {proof}
   The only if part of the theorem is proved in Theorem \ref {tree}.   In fact it is shown there that if $G$ has an action on a tree with the specified properties, then there is a proper $H$ almost invariant set $A$  for which $HAH= A$.
   
   Suppose then that $G$ has an $H$-almost invariant set $A$ such that  $AH = A$.  Since $G$ is finitely generated over $H$, we can construct
   the $G$-graph $X$ as above, in which $A$ can be regarded as a set of vertices for which $\delta A$ lies in finitely many $H$-orbits.
   Let this number of orbits be $n$.    Then we have seen that there is a $G$-tree $\bar T_n$ for which $H$ fixes a vertex $\bar o$ and
   every edge is in the same $G$-orbit as an edge in $\bar T(H)$.   The edges in this tree are $H$-finite.  The set $A$ has an expression in terms 
   if the edges of $\bar T(H)$.
   Finally we need to show  that the action on $\bar T_n$ is non-trivial.      If $G$ fixes $\bar o$, then $\nu (A)$ consists of the single vertex
   $o$ and so $A$ is not proper.     In fact the fact that $A$ is proper ensures that no vertex of $\bar T_n$ is fixed by $G$.

   It can be seen from the above that $ \bar T(H) \cap \bar T(g^{-1}Hg)  = \bar T(H\cap gHg^{-1})$  so that  if $e \in ET(H)$, and $g \in G_e$, then $e\in \bar T(gHg^{-1})$ and so  $G_e$ is $H$-finite.
  \end {proof}

    The Kropholler Conjecture follows immediately from the last Theorem.

 \section {$H$-almost stability}
Let $G$ be a group with subgroup $H$, and let $T$ be a $G$-tree.

Let $ \bar  A \subset VT$ be  such that $\delta \bar A \subset ET$  consists of finitely many $H$-orbits of edges $e$ such that $G_e$ is $H$-finite.  Also let $H$ fix a vertex of $T$.    Note that $\delta \bar A$ consists of whole $H$-orbits, so that $e \in \delta \bar A$ implies
$he \in \delta \bar A$ for every $h \in H$.     The fact that $G_e$ is $H$-finite for $e \in \delta \bar A$  follows from the fact that $\delta \bar A$ is $H$-finite.      If $H_e$ is the stabiliser of $e \in \delta \bar A$, then $[G_e : H_e]$ is finite.


Let $v \in VT$,  and   let $A = A(v)  = \{ g \in G | gv \in \bar A\} $.      Note that $A(xv) = A(v)x^{-1}$, so that the left action on $T$ becomes a right 
action on the sets $A(v)$.
 if $x \in G$ and $[v , xv]$ is the geodesic from $v$ to $xv$, then $g \in A + Ax$ if and only the geodesic $[gv, gxv]$ contains an odd number of
 edges in $\delta \bar A$.   If $[v, xv]$ consists of the edges $e_1, e_2, \dots , e_r$,  then $ge_i \in \delta \bar A $ if and only if $Hge_i \in \delta \bar A$.   It follows that $H(A + Ax) = A+Ax$.   It is also clear that for each $e_i$ there are only finitely many cosets $Hg$ such that $Hge_i \in \delta \bar A$.
 Thus $A$ is $H$-almost invariant.     We also have $A (v)H = A(v)$ if $H$ fixes $v$.
 
 For each $e \in ET$, let $d(e)$ be the number of cosets $Hg$ such that $Hge \in \delta \bar A$.    We see that $d(e) = d(xe)$ for every $x \in G$
 and so we have a metric on $VT$, that is invariant under the action of $G$.     We will show that if $G$ has an $H$-almost invariant set such
 that $HAH = A$ then there is a $G$-tree with a metric corresponding to this set.

From now on we are interested in the action of $G$ on the set of $H$-almost invariant sets.   But note that we are interested in
the action by right multiplication.   The  Almost Stability Theorem \cite {[DD]}, also used 
the action by right multiplication. 
Let $A \subset G$ be $H$-almost invariant and let $HA = A$     For the moment we do not assume that $AH = A$.

Let $M  = \{ B | B =_a A\}  $ so that  for  $B, C   \in M ,     B +C  = HF$ where $F$ is finite.

Note  that for $H = \{ 1 \}$  it follows from the Almost Stability Theorem that $M$ is the vertex set of a $G$-tree.

We define a metric on $M$.
For $B, C \in M$ define $d(B, C)$ to be the number of $H$-cosets  in $B +C$.

This is a metric on $M$,   since  $(B+C) + (C+D) = (B+D)$, and so an element  which is in $B+D$ is in just one of 
$B+C$ or $C+D$.  Thus $d(B,D) \leq d(B,C) + d(C, D)$.

Also $G$ acts on $M$ by right multiplication and this action is  by isometries, since $(B+C)z = Bz + Cz$. 
Let $ \Gamma $ be the graph with $V \Gamma = M$ and two vertices are joined by an edge if they are distance one apart.
   Every edge in $ \Gamma $ corresponds to a particular $H$-coset.
There are exactly  $n!$ geodesics joining $B$ and $C$ if $d(B, C) = n$, since a geodesic will correspond to a permutation of
the cosets in $B + C$.  The vertices of $ \Gamma$ on such a geodesic form the vertices of an $n$-cube.

 The edges corresponding to a particular coset $Hb$ disconnect $\Gamma $, since removing this set of edges gives two sets of vertices,   $B$ and $B^*$, where $B$ is the set of those $C \in M$ such that $Hb \subset C$.

It has been pointed out to me by Graham Niblo that $\Gamma $ is related to the  $1$-skeleton of the Sageev cubing introduced in \cite {[Sa]}.
For completeness we describe this connection.

 Let $G$ be a group with subgroup $H$ and let $A = HA$ be an $H$-almost invariant subset.  Let 
 
 $$ \Sigma = \{ gA | g \in G\} \cup \{gA^* | g \in G \}.$$
 
We define a graph $\Gamma '$.
 A vertex $V$ of $\Gamma '$ is a subset of  $\Sigma $ satisfying the following conditions:-
 \begin {itemize}
 \item [(1)] For all $B \in \Gamma '$, exactly one of $B, B^*$ is in $V$.
 \item[(2)] If $B \in V, C \in \Sigma $ and $B \subseteq C$, then $C \in V$.
 \end{itemize}
Two vertices are joined by an edge in $\Gamma '$ if they differ by one element of $\Sigma $.
For $g \in G$, there is a vertex $V_g$ consisting of all the elements of $\Sigma $ that contain $g$.
Then Sageev shows that there is a component $\Gamma ^1$ of $\Gamma '$ that contains all the $V_g$.  

By (1) for each $V \in \Sigma $ either $A \in V$ or $A^* \in V$ but not both.  Let $\Sigma _A$ be the  subset of $\Sigma $ consisting
of those $V \in \Sigma $ for which $A \subset V$.    The edges joining $\Sigma _A$ and $\Sigma _A^*$ in $\Gamma ^1$ form a hyperplane.
Each edge in the hyperplane joins a pair of vertices that differ only on the set $A$.   For each $xA$ there is a hyperplane joining
vertices that differ only on the set $xA$.  Clearly $G$ acts transitively on the set of hyperplanes.

With $V$ as above,  consider the subset $A_V$ of $G$ 
   $$ A_V =\{ x \in G | x^{-1}A \in V \} . $$
   Then $HA_V = A_V$ and $A_{V_1} =A$.
   Also $A_V +A$ is the union of those cosets $Hx$ for which $V$ and $V_1$ differ on $x^{-1}A$, which is finite.   Thus $A_V \in V\Gamma $.

Thus there is a map $V\Gamma ^1 \rightarrow V\Gamma  $ in which $V \mapsto A_V$.
This map is an injective  $G$-map.

If the set $A$ is such that $A$ and $gA$ are nested for every $g \in G$, then $\Gamma ^1$  is a $G$-tree.
Thus $V\Gamma $  contains a $G$-subset that is the vertex set of a $G$-tree.

In $\Gamma $ a hyperplane consists of edges joining those vertices that differ only by a particular coset $Hx$.   Every edge of $\Gamma $
belongs to just one hyperplane.   The group $G$ acts transitively on hyperplanes.   The hyperplane corresponding to $Hx$ has stabilizer
$x^{-1}Hx$.

Suppose now that   $A$ is $H$-almost invariant with $HAK = A$.     Here $H$ is the left stabiliser and $K$ is the right stabiliser of $A$, and we assume that  $H \leq K$, so that in particular $HAH = A$.
Note that it follows from the fact that $A$ is $H$-almost invariant that it is also $K$ almost invariant.
Suppose that $G$ is finitely generated over $K$.     We have seen, in the previous section,  that there is a $G$-tree $T$ in which $A$ uniquely determines a set  $\bar A$ of vertices with
$H$-finite coboundary $\delta \bar A$.  Here $T = T_n$ for $n$ sufficiently large that in the graph $X$ -as defined in the previous section -
the set $\delta \bar A $ is contained in at most $n$ $H$-orbits of edges.
Note that if $e$ is an edge of $\bar T (H) = \bar \ce  (H, X)$, then $\delta e$ is $H_e$-finite, and will consist of finitely many $H_e$-orbits.   It is then 
the case that $[G_e: H_e]$ is finite, since $\delta e$ will consist of finitely many $G_e$-orbits each of which is a union of $[G_e:H_e]$  $H_e$-orbits
of edges.

We also know that $K$ fixes a vertex $\bar o$ of $T$,  and that $H\delta \bar A = \delta \bar A$.  
Thus $\delta \bar A$ consists of finitely many $H$-orbits of edges.   
  We can  contract any edge whose $G$-orbit does not intersect $\delta \bar A$.
We will then have a tree that has the properties indicated at the beginning of this section.   Thus $ \bar  A \subset VT$ is   such that $\delta \bar A \subset ET$  consists of finitely many $H$-orbits of edges $e$ such that  $G_e$ is $H$-finite.
We see that the metric $d$ on $M$   is the same as the metric defined on $VT$.  Explicitly we have proved the following theorem in the case when
$G$ is finitely generated over $K$.
\begin {theo}
Let $G$ be a group with subgroup $H$ and let $A = HAK$ where $H \leq K$ and $A$ is $H$-almost invariant.  
Let $M$ be the $G$-metric space defined above.    Then there is a $G$-tree $T$ such that  $VT$ is a $G$-subset of $M$  and the metric on
$M$ restricts  to a geodesic metric on  $VT$.  If $e \in ET$ then some edge in the $G$-orbit of $e$ has $H$-finite stabiliser.
\end {theo}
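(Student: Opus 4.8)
The plan is to reduce the general statement to the case already settled, namely that in which $G$ is finitely generated over $K$, and then to recover the general case by a direct limit resting on the canonicity of the structure-tree construction established in Theorem \ref{cE}. The essential point is that no choices are made in building the nested generating set $\ce$, so the trees attached to larger and larger subgroups of $G$ are forced to be mutually compatible and can be amalgamated into a single $G$-tree.

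First I would write $G$ as the directed union $G = \bigcup_\lambda G_\lambda$ of its subgroups $G_\lambda = \langle K \cup S_\lambda \rangle$, where $S_\lambda$ runs over the finite subsets of $G$ and $S_\lambda \subseteq S_\mu$ whenever $G_\lambda \subseteq G_\mu$. Each $G_\lambda$ contains $K$ and is finitely generated over $K$. For each $\lambda$ I would restrict $A$ to $A_\lambda = A \cap G_\lambda$ and check the routine point that $A_\lambda$ is $H$-almost invariant in $G_\lambda$ with $H A_\lambda K = A_\lambda$: since $H \leq K \leq G_\lambda$, for $x \in G_\lambda$ one has $A_\lambda + A_\lambda x = (A + Ax) \cap G_\lambda$, which meets only finitely many $H$-cosets because $A + Ax$ does, while $HAK = A$ together with $H, K \subseteq G_\lambda$ gives $H A_\lambda K = A_\lambda$. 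Thus the case proved above applies to $(G_\lambda, H, K, A_\lambda)$ and yields a $G_\lambda$-tree $T_\lambda$ whose vertex set is a $G_\lambda$-subset of $M_\lambda = \{ B \mid B =_a A_\lambda \}$, on which the metric $d$ restricts to the geodesic metric of $T_\lambda$ and on which every edge orbit contains an edge with $H$-finite stabiliser (the latter coming, as before, from the Kropholler corner of Lemma \ref{Krop}).

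The crux is to amalgamate the $T_\lambda$. Because $T_\lambda$ is manufactured from the canonically determined set $\ce(H \backslash X_\lambda)$, and because $X_\lambda$ embeds in $X_\mu$ when $G_\lambda \subseteq G_\mu$, the uniqueness in Theorem \ref{cE} forces $\bar\ce_n(H, X_\lambda)$ to be carried into $\bar\ce_n(H, X_\mu)$, so that $T_\lambda$ appears $G_\lambda$-equivariantly as a subtree of $T_\mu$. Setting $T = \bigcup_\lambda T_\lambda$ then gives a tree, the finite interval condition surviving in the union since any geodesic already lies in some $T_\lambda$; this tree carries a $G$-action, and identifying each vertex with the element of $M$ to which it restricts compatibly realises $VT$ as a $G$-subset of $M$. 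Both the metric identity and the $H$-finiteness of edge stabilisers are finitary conditions and transfer unchanged to the union, which completes the argument.

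I expect the main obstacle to be exactly this amalgamation: one must verify that the canonical nested cut set of the smaller quotient $H \backslash X_\lambda$ is genuinely \emph{induced} from that of $H \backslash X_\mu$, so that the trees nest coherently rather than merely abstractly, and that the identification of a limit vertex with a single subset $B \in M$ is well defined and commutes with the right $G$-action on $M$. This is precisely where the canonicity of Theorem \ref{cE} is indispensable: without it the locally defined trees could not be glued into one $G$-tree, and the theorem would not follow from the finitely generated case.
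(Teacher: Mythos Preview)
Your overall strategy --- reduce to the case where $G$ is finitely generated over $K$ and then pass to a direct limit --- matches the paper's own first sketch, and you correctly single out the amalgamation of the $T_\lambda$ as the crux. The gap is in your proposed resolution of that crux. You invoke the canonicity of Theorem~\ref{cE} to force $\bar\ce_n(H,X_\lambda)\subseteq\bar\ce_n(H,X_\mu)$ whenever $G_\lambda\subseteq G_\mu$, but canonicity is a statement about a \emph{fixed} graph: the set $\ce$ is uniquely determined by $H\backslash X_\lambda$, not functorial in it. Enlarging $G_\lambda$ to $G_\mu$ changes the ambient graph --- new vertices, new edges, new cuts --- and a cut that is thin (or optimally nested at level $n$) in $H\backslash X_\lambda$ has no reason to remain so in $H\backslash X_\mu$. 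So the very compatibility you flag as needing verification is not supplied by canonicity alone; your argument stops exactly where the difficulty begins.

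The paper sidesteps this rather than confronting it. Instead of nesting the trees, it restricts to the single orbit $M'=AG\subset M$ and uses the finitely generated case only to check that $M'$ is $0$-hyperbolic: any four points $B,C,D,A\in M'$ already lie in the vertex set of some $L$-tree inside $M$ (with $L=\langle H\cup F\rangle$ for a suitable finite $F$), so the four-point inequality
\[
(B.C)_A \ \ge\ \min\{(B.D)_A,\,(C.D)_A\}
\]
holds automatically. Chiswell's theorem then embeds the $0$-hyperbolic space $M'$ isometrically in a unique $\Z$-tree, and uniqueness makes that tree $G$-invariant for free. The point is that one never needs the locally constructed trees to be mutually compatible --- only that every finite configuration of points is tree-like, which is a much weaker and purely metric condition. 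If you want to salvage the direct-limit route, this is what you would have to replace the canonicity appeal with.
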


This is illustrated in Fig 7 and Fig 8.

\begin {proof}
It remains to show that that the theorem for arbitrary $G$ follows from the case when $G$ is finitely generated over $K$.
Thus if $F$ is a finite subset of $G$,  then there is a finite convex subgraph $C$  of $\Gamma $ containing $AF$.   We can use the graph $X$ of the previous section for the subgroup $L$ of $G$ generated by $H \cup F$ to construct an $L$-tree which has a subtree  $S(F)$ with vertex set contained
in $VC$.  These subtrees have the nice property that if $F_1 \subset F_2$ then $S(F_1)$ is a subtree of $S(F_2)$.    They therefore fit together
nicely to give the required $G$-tree.      We give a more detailed argument for why this is the case.  We follow the approach of \cite {[C]}.

Let $M'$ be the subspace of $M$ consisting of the single $G$-orbit $AG$.    Define an inner product on $M'$  by 
$(B.C)_A = {1\over 2} (d(A, B) +d(A,C) - d(B, C)).$

This turns $M'$ into a $0$-hyperbolic space, i.e. it satisfies the inequality 
$$(B.C)_A \geq min \{ (B.D)_A, (C.D)_A\}$$
for every $B, C, D \in M'$.    This is because we know that if $L \leq G$ is finitely generated over $H$, then there is an $L$-tree
which is a subspace of $M$.   But $A, B, C,D$ are vertices of such a subtree which is $0$-hyperbolic.
It now follows from \cite {[C]},  Chapter 2, Theorem 4.4 that there is a unique $\Z$-tree $VT$ (up to isometry) containing $M'$.    The subset of   $VT$ consisting of vertices of degree larger than  $2$  will be the vertices of a $G$-tree and can be regarded as a $G$-subset of $M$ containing $M'$.
\end {proof}


\begin{figure}[htbp]
\centering
\begin{tikzpicture}[scale=1]
 \path (6,2) coordinate (p1);
    \path (2,3) coordinate (p2);
    \path (5,4) coordinate (p3);
   \draw (4, 3) coordinate (q1);
   \draw (3, 5) coordinate (q3);
    \draw (5.5, 5) coordinate (q4);
    \draw (1.5, 2) coordinate (q5);
    \draw (2.5, 1) coordinate (q6);
     \filldraw (p1) circle (3pt) ;
 \filldraw (p2) circle (3pt);
   \filldraw (p3) circle (3pt);  
\filldraw  (q1) circle (3pt);
\filldraw [white]  (q1) circle (2pt);

\path (3.5, 6) coordinate (p4);
 \filldraw (p4) circle (3pt);  
 \path (3, 2) coordinate (p5);
  \filldraw (p5) circle (3pt);  
 \path (7.5, 2) coordinate (q2);
 \filldraw (q2) circle (3pt);
\filldraw (q3) circle (3pt);
\filldraw (q4) circle (3pt);
\filldraw (q5) circle (3pt);
\filldraw (q6) circle (3pt);

\draw [red] (q3)--(p4) ;
\draw [red] (q4)--(p3) ; 
\draw [red] (p2)--(q5)--(p5);  
 \draw [red] (4,3) --(5.1,3) --(6, 2.7) -- (6,2) ;
\draw [red] (4,3) --(4.9,2.7) --(6, 2.7) ;
\draw [red] (4,3) --(4,2.3) --(4.9, 2) -- (6,2) ;
\draw [red] (4,2.3) --(5.1, 2.3) -- (6,2) ;
\draw [red] (5.1, 2.3) -- (5.1,3) ;
\draw [red] (4.9, 2.7) -- (4.9,2) ;
\draw [red] (4, 3)--(4.5, 3.2) --(5, 4) -- (4.5, 3.8) -- cycle ;
\draw [red] (4, 3)--(3, 3.3) --(2,3) -- (3,2.7) -- cycle ;

 \draw [red] (3.5,6) --(4.6,6) --(5.5, 5.7) -- (5.5,5) ;
\draw [red] (3.5,6) --(4.4,5.7) --(5.5,5.7) ;
\draw [red] (3.5,6) --(3.5,5.3) --(4.4, 5) -- (5.5,5) ;
\draw [red] (3.5,5.3) --(4.6,5.3) -- (5.5,5) ;
\draw [red] (4.6, 5.3) -- (4.6,6) ;
\draw [red] (4.4, 5.7) -- (4.4,5) ;
\draw [red] (6,2)--(6.75, 2.5) --(7.5,2) -- (6.75, 1.5) -- cycle ;
\draw [red] (2.5,1)--(2.5, 1.5) --(3,2) -- (3,1.5) -- cycle ;

    \filldraw [white] (p1) circle (2pt);
\filldraw [white]  (p2) circle (2pt);
\filldraw [white] (p3) circle (2pt);
\filldraw [white] (p5) circle (2pt);
\filldraw [white] (p4) circle (2pt);
\filldraw [white]  (q1) circle (2pt);
 \filldraw [white] (q2) circle (2pt);
\filldraw  [white] (q3) circle (2pt);
\filldraw [white]  (q4) circle (2pt);
\filldraw [white]  (q5) circle (2pt);
\filldraw [white] (q6) circle (2pt); 

\draw (5, 0) node {Fig 7};

\end{tikzpicture}
\end{figure}

\begin{figure}[htbp]
\centering
\begin{tikzpicture}[scale=1.2]
 \path (6,2) coordinate (p1);
    \path (2,3) coordinate (p2);
    \path (5,4) coordinate (p3);
   \draw (4, 3) coordinate (q1);
   \draw (3, 5) coordinate (q3);
    \draw (5.5, 5) coordinate (q4);
    \draw (1.5, 2) coordinate (q5);
    \draw (2.5, 1) coordinate (q6);
     \filldraw (p1) circle (3pt) ;
 \filldraw (p2) circle (3pt);
   \filldraw (p3) circle (3pt);  
\filldraw  (q1) circle (3pt);
\filldraw [white]  (q1) circle (2pt);

\path (3.5, 6) coordinate (p4);
 \filldraw (p4) circle (3pt);  
 \path (3, 2) coordinate (p5);
  \filldraw (p5) circle (3pt);  
 \draw (p1) -- (q1);
  \draw (p2) -- (q1);
\draw (p3) -- (q1);
 \path (7.5, 2) coordinate (q2);
 \filldraw (q2) circle (3pt);
\filldraw (q3) circle (3pt);
\filldraw (q4) circle (3pt);
\filldraw (q5) circle (3pt);
\filldraw (q6) circle (3pt);

\draw (q3)--(p4)--(q4)--(p3);
 \draw (p2)--(q5)--(p5)--(q6);  
 \draw (p1) --(q2);
 \draw [red] (4,3) --(5.1,3) --(6, 2.7) -- (6,2) ;
\draw [red] (4,3) --(4.9,2.7) --(6, 2.7) ;
\draw [red] (4,3) --(4,2.3) --(4.9, 2) -- (6,2) ;
\draw [red] (4,2.3) --(5.1, 2.3) -- (6,2) ;
\draw [red] (5.1, 2.3) -- (5.1,3) ;
\draw [red] (4.9, 2.7) -- (4.9,2) ;
\draw [red] (4, 3)--(4.5, 3.2) --(5, 4) -- (4.5, 3.8) -- cycle ;
\draw [red] (4, 3)--(3, 3.3) --(2,3) -- (3,2.7) -- cycle ;

 \draw [red] (3.5,6) --(4.6,6) --(5.5, 5.7) -- (5.5,5) ;
\draw [red] (3.5,6) --(4.4,5.7) --(5.5,5.7) ;
\draw [red] (3.5,6) --(3.5,5.3) --(4.4, 5) -- (5.5,5) ;
\draw [red] (3.5,5.3) --(4.6,5.3) -- (5.5,5) ;
\draw [red] (4.6, 5.3) -- (4.6,6) ;
\draw [red] (4.4, 5.7) -- (4.4,5) ;
\draw [red] (6,2)--(6.75, 2.5) --(7.5,2) -- (6.75, 1.5) -- cycle ;
\draw [red] (2.5,1)--(2.5, 1.5) --(3,2) -- (3,1.5) -- cycle ;

    \filldraw [white] (p1) circle (2pt);
\filldraw [white]  (p2) circle (2pt);
\filldraw [white] (p3) circle (2pt);
\filldraw [white] (p5) circle (2pt);
\filldraw [white] (p4) circle (2pt);
\filldraw [white]  (q1) circle (2pt);
 \filldraw [white] (q2) circle (2pt);
\filldraw  [white] (q3) circle (2pt);
\filldraw [white]  (q4) circle (2pt);
\filldraw [white]  (q5) circle (2pt);
\filldraw [white] (q6) circle (2pt); 

\end{tikzpicture}
\end{figure}
\eject
 
\begin{figure}[htbp]
\centering
\begin{tikzpicture}[scale=1.1]
 \path (6,2) coordinate (p1);
    \path (2,3) coordinate (p2);
    \path (5,4) coordinate (p3);
   \draw (4, 3) coordinate (q1);
   \draw (3, 5) coordinate (q3);
    \draw (5.5, 5) coordinate (q4);
    \draw (1.5, 2) coordinate (q5);
    \draw (2.5, 1) coordinate (q6);
     \filldraw (p1) circle (3pt) ;
 \filldraw (p2) circle (3pt);
   \filldraw (p3) circle (3pt);  
\filldraw  (q1) circle (3pt);
\filldraw [white]  (q1) circle (2pt);

\draw (3, 5.5) node {$^1$};
\draw (4.6, 5.6) node {$^3$};
\draw (5.1, 4.5) node {$^1$};
\draw (4.4, 3.6) node {$^2$};
\draw (5, 2.6) node {$^3$};
\draw (3, 3.1) node {$^2$};
\draw (2.25,2.1) node {$^1$};
\draw (6.75, 2.1) node {$^2$};
\draw (1.5, 2.5) node {$^1$};
\draw (2.6, 1.5) node {$^2$};

\path (3.5, 6) coordinate (p4);
 \filldraw (p4) circle (3pt);  
 \path (3, 2) coordinate (p5);
  \filldraw (p5) circle (3pt);  
 \draw (p1) -- (q1);
  \draw (p2) -- (q1);
\draw (p3) -- (q1);
 \path (7.5, 2) coordinate (q2);
 \filldraw (q2) circle (3pt);
\filldraw (q3) circle (3pt);
\filldraw (q4) circle (3pt);
\filldraw (q5) circle (3pt);
\filldraw (q6) circle (3pt);

\draw (q3)--(p4)--(q4)--(p3);
 \draw (p2)--(q5)--(p5)--(q6);  
 \draw (p1) --(q2);

    \filldraw [white] (p1) circle (2pt);
\filldraw [white]  (p2) circle (2pt);
\filldraw [white] (p3) circle (2pt);
\filldraw [white] (p5) circle (2pt);
\filldraw [white] (p4) circle (2pt);
\filldraw [white]  (q1) circle (2pt);
 \filldraw [white] (q2) circle (2pt);
\filldraw  [white] (q3) circle (2pt);
\filldraw [white]  (q4) circle (2pt);
\filldraw [white]  (q5) circle (2pt);
\filldraw [white] (q6) circle (2pt); 
\draw (5, 0) node {Fig 8};


\end{tikzpicture}
\end{figure}


\end{document}